\definecolor{bordeaux}{rgb}{0.6,0,0.3}
\definecolor{rose}{rgb}{0.8,0,0.6}
\definecolor{darkmagenta}{rgb}{0.5,0,0.6}
\definecolor{dmarine}{rgb}{0.3,0.5,0.8}
\definecolor{gmarine}{rgb}{0.1,0.6,0.3}
\definecolor{dblue}{rgb}{0,0,0.5}
\definecolor{dred}{rgb}{0.9,0,0}
\definecolor{dgreen}{rgb}{0,0.6,0}
\definecolor{myred}{rgb}{0.9,0,0}
\definecolor{mygreen}{rgb}{0,0.7,0}
\definecolor{myblue}{rgb}{0,.5,1}
\newcommand{\nnfootnote}[1]{%
\begin{NoHyper}
\renewcommand\thefootnote{}\footnote{#1}%
\addtocounter{footnote}{-1}%
\end{NoHyper}
}
\newcounter{saveenumerate}
\newcommand{\enumeratext}[1]{%
\setcounter{saveenumerate}{\value{enum\romannumeral\the\@enumdepth}}
\end{enumerate}
#1
\begin{enumerate}
\setcounter{enum\romannumeral\the\@enumdepth}{\value{saveenumerate}}%
}
\newcommand{\rnz}{R_n}
\newcommand{\dif}{\mathrm{d}}
\newcommand{\hs}{\mathcal{h}}
\newcommand{\es}{\mathcal{e}}
\newcommand{\cglt}{\mathcal{gl}_2}
\newcommand{\cslt}{\mathcal{sl}_2}
\newcommand{\csln}{\mathcal{sl}_n}
\newcommand{\smslt}{u_q(\slt)}
\newcommand{\kp}{\Bbbk_p}
\newcommand{\ee}{\mathtt{e}}
\newcommand{\ff}{\mathtt{f}}
\newcommand{\hh}{\mathtt{h}}
\newcommand{\Sy}{\mathfrak{S}}
\DeclareMathOperator{\nh}{NH}
\DeclareMathOperator{\nc}{NC}
\DeclareMathOperator{\mat}{Mat}
\newcommand{\bV}{\raisebox{0.03cm}{\mbox{\footnotesize$\textstyle{\bigwedge}$}}}
\newcommand{\slt}{{\mathfrak{sl}_{2}}}
\newcommand{\und}[1]{{\underline{#1}}}
\newcommand{\cP}{\mathcal{P}}
\DeclareMathOperator{\amod}{\mathrm{-}mod}
\DeclareMathOperator{\stmod}{\mathrm{-}\underline{mod}}
\DeclareMathOperator{\qdeg}{qdeg}
\DeclareMathOperator{\End}{End}
\newcommand{\tikzdiagh}[2][]{\tikz[#1,very thick,baseline={([yshift=#2]current bounding box.center)}]}
\newcommand{\tikzdiag}[1][]{\tikzdiagh[#1]{-.5ex}}
\tikzstyle{tikzdot}=[fill=black, circle, inner sep=2pt]
\newcommand{\plusspacing}{\llap{\phantom{\small ${+}1$}}}
\theoremstyle{plain}
\newtheorem{thm}{Theorem}[section]
\newtheorem{cor}[thm]{Corollary}
\newtheorem{lem}[thm]{Lemma}
\newtheorem{prop}[thm]{Proposition}
\newtheorem{conj}[thm]{Conjecture}
\theoremstyle{definition}
\newtheorem{rem}[thm]{Remark}
\newtheorem{ex}[thm]{Example}
\theoremstyle{definition}
\newtheorem{defn}[thm]{Definition}
\newcommand{\bN}{\mathbb{N}}
\newcommand{\bZ}{\mathbb{Z}}
\newcommand{\bC}{\mathbb{C}}
\newcommand{\bF}{\mathbb{F}}
\newcommand{\bO}{\mathbb{O}}
\newcommand{\cC}{\mathcal{C}}
\newcommand{\cD}{\mathcal{D}}
\newcommand{\cE}{\mathcal{E}}
\newcommand{\cF}{\mathcal{F}}
\newcommand{\cK}{\mathcal{K}}
\newcommand{\cR}{\mathcal{R}}
\DeclareMathOperator{\id}{Id}
\DeclareMathOperator{\Pol}{Pol}
\DeclareMathOperator{\Sym}{Sym}
\newcommand{\proj}{\twoheadrightarrow}
\long\def\@makecaption#1#2{%
    \vskip 10pt
    \setbox\@tempboxa\hbox{%
\small{#1: }\ignorespaces #2}%
    \ifdim \wd\@tempboxa >\captionwidth {%
        \rightskip=\@captionmargin\leftskip=\@captionmargin
        \unhbox\@tempboxa\par}%
      \else
        \hbox to\hsize{\hfil\box\@tempboxa\hfil}%
    \fi}
\newdimen\@captionmargin\@captionmargin=2\parindent
\newdimen\captionwidth\captionwidth=\hsize
\def\makeautorefname#1#2{\expandafter\def\csname#1autorefname\endcsname{#2}}
\newcommand{\fdot}[3][]{ \node  [anchor = center, fill=white, draw=black,circle,inner sep=2pt] at (#3) {} ; \node[xshift=-.065cm, yshift=-.065cm,anchor = south west] at (#3){\small $#1$}; \node[xshift=-.065cm, yshift=.065cm,anchor = north west] at (#3){\small $#2$}; }
\title{Enhanced nilHecke algebras and baby Verma modules
}
\author{Elia Rizzo}
\address{E.R.: Institut de Recherche en Math\'ematique et Physique\\
Universit\'e Catholique de Louvain\\ 
Chemin du Cyclotron 2\\ 
1348 Louvain-la-Neuve\\ 
Belgium}
\email{eliarizzo@hotmail.it}
\author{Pedro Vaz}
\address{P.V.: Institut de Recherche en Math{\'e}matique et Physique, 
Universit{\'e} Catholique de Louvain, Chemin du Cyclotron 2,  
1348 Louvain-la-Neuve, Belgium \\
\href{https://orcid.org/0000-0001-9422-4707}{ORCID 0000-0001-9422-4707},
\href{https://perso.uclouvain.be/pedro.vaz}{https://perso.uclouvain.be/pedro.vaz}} 
\email{pedro.vaz@uclouvain.be}
\begin{document}
\usetikzlibrary{matrix, calc, arrows}
 \usetikzlibrary{decorations.pathreplacing,backgrounds,decorations.markings}

\tikzstyle directed=[postaction={decorate,decoration={markings,
    mark=at position #1 with {\arrow{>}}}}]
\tikzstyle rdirected=[postaction={decorate,decoration={markings,
    mark=at position #1 with {\arrow{<}}}}]

 \tikzset{wei/.style={draw=red,double=red!40!white,double distance=1.5pt,thin}}
\tikzset{bdot/.style={fill,circle,color=blue,inner sep=3pt,outer sep=0}}
\tikzset{anchorbase/.style={baseline={([yshift=-0.5ex]current bounding box.center)}},
tinynodes/.style={font=\tiny,text height=0.25ex,text depth=0.05ex},
smallnodes/.style={font=\scriptsize,text height=0.75ex,text depth=0.15ex},
usual/.style={line width=0.9,color=black},
dusual/.style={line width=0.9,color=tomato,densely dashed}
}
%
%
%
\begin{abstract}
We define a derivation on the enhanced nilHecke algebra yielding a $p$-dg algebra when working over a field of characteristic $p$.  
We define functors on the category of $p$-dg modules resulting in an action of small quantum $\slt$ on the Grothendieck group, which is isomorphic to a baby Verma module. 
We upgrade the derivation into an action of the Lie algebra $\cslt$.
\end{abstract}
\nnfootnote{\textit{Mathematics Subject Classification 2020.} Primary: 18N25, 16W25; Secondary: 20G42, 20C08.}
\nnfootnote{\textit{Keywords.} Categorified Verma modules, $p$-DG algebras and modules, $\cslt$-actions, baby Verma modules.}
\maketitle

\tableofcontents
\pagestyle{myheadings}
\markboth{\em\small Elia Rizzo and Pedro Vaz}{\em\small $p$-dg enhanced nilHecke and Baby Vermas}
%
%
%
\section*{Introduction}


When $q$ is a root of unity, new phenomena appear in the representation theory of quantum Kac--Moody algebras. 
There are several versions of the quantum group at a root of unity and we are interested in the finite-dimensional version $\smslt$ (see~\autoref{def:smallqg} below) called ``small quantum group'' in several places of the literature (see for example~\cite{KhQi} and~\cite{AndersenMazorchuk}) and something different in others (for example, it is called a ``semi-restricted quantum group'' in~\cite{murakami-nagatomo}).
One representation that is of interest to us is the so-called ``baby Verma module'', which the induced module defined as in~\cite[\S3.2]{AndersenMazorchuk}:
\[
M(\lambda) := \smslt\otimes_{b_q}\bC_\lambda .
\]
Here as usual $b_q\subset\smslt$ is the (standard) Borel subalgebra, $\lambda$ is a generic parameter and $\bC_\lambda$ is a 1-dimensional representation of the Cartan subalgebra, extended trivially to $b_q$. The dimension of $M(\lambda)$ equals the order of $q$ and corresponds to the representation denoted $V(\lambda,a,b)$ in~\cite[Ex. 11.1.7]{chari-pressley} specialized to $\smslt$ i.e. with $a=b=0$. 

Two-dimensional baby Verma modules for $\smslt$ can be used to reproduce the Alexander link polynomial~\cite{murakami-alexanderpoly}. More generally, they have been successful in producing  new link invariants~\cite{deguchi} generalizing the Alexander polynomial and new interesting 3-manifold invariants~\cite{bgcpm-nonsemisimple}.

\subsubsection*{Categorification}
A major breakthrough in categorification was the development of Kac--Moody 2-categories, aka categorified quantum groups, by Lauda~\cite{Lauda1}, Khovanov--Lauda~\cite{KL1,KL2,KL3} and Rouquier~\cite{Rouquier-twoKM,rouquierquiv} and has undergone an impressive development (see for example~\cite{BK-isos,BK-graded,webster-knot,mackaay-webster,msv-schur,LQR,QR-foams}). 
The structures of Khovanov--Lauda and Rouquier's categorifications are tailor made for categorifying integrable representations of quantum Kac--Moody algebras and cannot be used to categorify non-integrable representations like for example Verma modules. 
The program on categorification of Verma modules was initiated by the second author together with Naisse in~\cite{naissevaz1} with the case of quantum $\slt$ and continued in the sequels~\cite{naissevaz2,naissevaz3,naissevaz4} (see~\cite{lanava-blob} and~\cite{DuNa-categorical-lkb} for further developments). 

The program of categorification at roots of unity that started with Khovanov in~\cite{Kh-hopfological} has been successfully applied  to categorification of quantum groups and its representations~\cite{KhQi,QiS-Burau,EliasQi1,EliasQi2,KhQiSussan,KhQiSussan2}, as well as to Hecke algebras and to link invariants~\cite{eliasqi-Hecke,QiSussan-jones,QiSussan-jones2,qrsw}. Part of the success of this program is explained to the ability of finding interesting $p$-nilpotent derivations on the structures used in categorification, as suggested in~\cite[\S 4]{Kh-hopfological}. In most examples of interest this takes us to work with $p$-dg algebras and $p$-dg categories, which are natural generalizations of dg-algebras and dg-categories that were studied in~\cite{Qi}. In~\cite{KhQi} a $p$-dg structure was constructed on the nilHecke algebra resulting in categorification of one half of $\smslt$ at a prime root of unity. This $p$-derivation was later upgraded into an action of the Lie algebra $\cslt$ by finding a ``counterdifferential'' and forming an $\cslt$-triple (see~\cite{eliasqi-sl2}).

\subsubsection*{What this paper does}
This work takes place in the confluence of categorification of Verma modules and Hopfological algebra. In the simplest case of $\slt$ the nilHecke algebra is replaced by an algebra $A_n$ which, as an abelian group is the tensor product of the nilHecke algebra $\nh_n$ and an exterior algebra in $n$ variables $\bV(\omega_1,\cdots,\omega_n)$. The defining relations of $A_n$ guarantee that $\nh_n$ and $\bV(\omega_1,\cdots,\omega_n)$ are subalgebras (see~\cite{naissevaz2} for details). 

The purpose of this paper is to extend to $A_n$ the $p$-derivation on $\nh_n$ and use it to categorify baby Verma modules. We remark that the defining relations of $A_n$ imply that a trivial extension of Khovanov--Qi's $p$-differential to $A_n$ cannot exist.
The requirement about the Grothendieck group is a baby Verma module imposes conditions on the $p$-derivations that we allow: for example, for each $n$ a good derivation has to act trivially on $\omega_n$ (since it cannot act trivially on all $\omega_i$s this indicates that it depends on $n$).
We construct a $p$-derivation $\dif$ on $A_n$, depending on $n$ and satisfying all the required conditions, it:
\begin{enumerate}
\item restricts to Khovanov-Qi's derivation on $\nh_n$,
\item is $\Sy_n$-equivariant,
\item \label{condd} acts trivially on $\omega_n$,
\item is compatible with an inclusion $A_n\hookrightarrow A_{n+1}$ extending the canonical inclusion $\nh_n\hookrightarrow\nh_{n+1}$, and
\item becomes $p$-nilpotent after base-change to a field of characteristic $p$.
\end{enumerate}
Condition \ref{condd} imposes severe conditions on a map including $A_n$ into $A_{n+1}$. See~\autoref{prop:Andiff} and~\autoref{prop:inclusion} for the explicit forms of the derivations and the inclusion maps.

\smallskip

Here is a brief plan of the paper. In~\autoref{sec:reminders} we give a short review on $p$-dg algebras and their $p$-dg modules, and of the $p$-dg nilHecke algebra and the enhanced nilHecke algebra $A_n$ that will be main players in the rest of the paper.   
In~\autoref{sec:pdgAn} we extend Khovanov and Qi's derivation to the enhanced nilHecke algebra $A_n$ and we construct a non canonical inclusion $A_n\hookrightarrow A_{n+1}$, compatible with our derivation. Contrary to Khovanov and Qi's, our derivation depends on $n$. During the process we extend nontrivially the notion of partial derivatives to $\Bbbk[x_1,\dotsc,x_n]\otimes\bV(\omega_1,\cdots,\omega_n)$.
In~\autoref{sec:cataction} we work over a field of characteristic $p$, where $A_n$ becomes a $p$-dg algebra. The inclusion $(A_n,\dif)\hookrightarrow (A_{n+1},\dif)$ allows defining functors $\cE$, $\cF$ and $\cK^{\pm 1}$ on a (derived) category of $p$-dg modules over $A:=\oplus_{n\geq 0}A_n$ that induce operators on the Grothendieck group $\mathbf{K}_0(A)$ satisfying the relations of quantum $\slt$. We further show that $\mathbf{K}_0(A)$ is isomorphic to a baby Verma module. Finally, in~\autoref{sec:sltwo} we use the partial derivatives to define an action of the positive Witt algebra on $R_n$. This allows upgrading the $p$-derivation to an action of $\cslt$ on $A_n$. 
We find that opposite to $\nh_n$, the algebra $A_n$ is not a weight module for $\cslt$, but it is filtered by weight modules instead.

\subsubsection*{Loose ends and speculations}
One important feature of the extended nilHecke algebra is the existence of a family of differentials $d_N$, indexed by the naturals. This results in a dg-algebra that is quasi-isomorphic to a cyclotomic nilHecke algebra, seen as a dg-algebra with zero differential (see~\cite[\S 8.6]{naissevaz1}). This is a fundamental ingredient to categorify the projection of the universal Verma module to the $N+1$-dimensional irreducible representation under the specialization $\lambda=q^N$.
The differential $d_N$ does not commute with the $p$-differentials, preventing us from relating our construction to the construction using cyclotomic quotients of the nilHecke algebra~\cite[\S 6.3]{EliasQi1}. 
A connection between these two constructions requires new ideas, lacking at the moment. 

In this paper we prove that the $\slt$-relations are satisfied on the Grothendieck group. It would be desirable to have a $p$-dg version of a categorical commutation relation, in the style of Corollary 3.16 in~\cite{naissevaz2}, yielding a categorical $\slt$-action. Ideally we would like to find a solution to this question, but so far it has eluded us. 

There are obvious follow-up questions about generalizing the results in this paper to constructions in link homology, like the ones in
\cite{naissevaz3,naissevaz4,lanava-blob}. In a different direction, being equipped with partial derivatives might help further understanding the structure of Verma categorification.

\subsubsection*{Acknowledgments} 
P.V. was supported by the Fonds de la Recherche Scientifique - FNRS under Grants no. MIS-F.4536.19 and CDR-J.0189.23.


\section{Reminders}\label{sec:reminders}

Fix once and for all the fields $\Bbbk$ and $\kp$ of characteristic zero and $p>0$ respectively.
In this paper we work over $\Bbbk$ almost always and specialize $\kp$ whenever strictly necessary.
Unadorned tensor products $\otimes$ denote tensor products over $\Bbbk$ or $\kp$, depending on the context.
Except when requiring $p$-nilpotency of derivations, and in~\autoref{sec:cataction}, all our results are valid over $\bZ$, and the reader can replace $\Bbbk$ by $\bZ$ safely. We use the usual notation $\bF_p$ for the finite field $\bZ / p\bZ$ of characteristic $p>0$.

\subsection{\texorpdfstring{$p$}{p}-dg algebras and modules}
In this section we review the basic definitions and notations for $p$-dg algebras and their categories of modules that we need in this paper (see~\cite[\S 2]{KhQi} and~\cite{Qi,Kh-hopfological} for further details).
Define $\bO_p \coloneq \bZ[q]/(1+q^2+\cdots +q^{2(p-1)})$.
\begin{defn}\label{def:pDGA}
Let $A\cong \oplus_{i\in \bZ}A^i$ be a $\bZ$-graded $\Bbbk$-algebra.
The degree $2$ $\Bbbk$-linear map $\dif : A\to A$ 
is a \emph{$p$-differential} if it is $p$-nilpotent, i.e. $\dif^p=0$,   
and it satisfies the Leibniz rule
\[
\dif(ab)=\dif(a)b + a\dif(b),
\]
for any $a$, $b\in A$. The datum $(A, \dif)$ is called a \emph{$p$-dg algebra}.
\end{defn}
The degree of the operator is chosen to be $2$ for categorification reasons.
\begin{defn}
	Let $(A,\dif )$ be a $p$-dg algebra, $M$ a graded left $A$-module and $\dif_M:M\to M$ a $\Bbbk$-linear endomorphism of vector spaces of degree $2$ that is $p$-nilpotent
\[
\dif_M^p=0,
\]
and for any $a\in A$, $m\in M$
\[
  \dif_M(am)=\dif(a)m+a\dif_M(m).
\] 
The datum $(M,\dif_M)$ is called a left \emph{$p$-dg module} over the $p$-dg algebra $A$.
\end{defn}
We will often omit the differential when speaking about $p$-dg algebras and modules when there is no risk of confusion.
For example we write just $A$ for the $p$-dg algebra $(A,\dif)$.
\begin{defn}
Let $(A,\dif )$ be a $p$-dg algebra and $(M,\dif_M)$, $(N,\dif_N)$ two $p$-dg $A$-modules. A \emph{morphism of $p$-dg modules} $f:M\to N$ is a morphism of $A$-modules which commutes with the differentials of $M$ and $N$:
\[
f(\dif_M(m))=\dif_N(f(m)),
\] 
for any $m\in M$.
\end{defn}
The collection of $p$-dg modules over a $p$-dg algebra and of grading preserving $p$-dg morphisms forms an abelian category we denote $A_\dif\amod$.

Let us now define the $\bZ$-graded Hopf algebra $H=\kp[\dif]/(\dif ^p)$, such that $\deg(\dif)=2$ and with comultiplication 
\[
  \Delta(\dif)=1\otimes \dif + \dif \otimes 1,
\]  
counit
\[
  \epsilon(\dif^i)=\delta_{0,i},
\]  
and antipode
\[
  S(\dif)=-\dif.
\]  
A $p$-dg algebra over $\kp$ is also a graded $H$-module algebra.

\begin{defn} \label{def:smashproduct}
    Let $B$ be a bialgebra and let $A$ be a left $B$-module algebra. The \emph{smash product algebra of $A$ and $B$}, denoted $A\# B$, is the vector space $A\otimes B$ with product
\[
 \left(a\otimes h\right)\left(b\otimes g\right)=\sum a(h_{(1)}\cdot b)\otimes h_{(2)}g, \quad a,b\in A,\ h,g\in B,
\]  
where the symbol $\cdot$ represents the action of $B$ on $A$, and the summation uses the Sweedler notation.
\end{defn}
When the bialgebra $B=H=\kp[\dif]/(\dif ^p)$ and $A$ is a $p$-dg algebra over $\kp$ we denote by $A_d=A\#H$ the smash product between $A$ and $H$.
The grading chosen for $A$ and $H$ implies that $A_d$ is graded. This choice of notation is justified by the fact that a $p$-dg module over $A$ is the same as a module over the smash product algebra $A\#H$. Indeed, following~\autoref{def:smashproduct} we observe that
$A_\dif= A\#\kp[\dif]/(\dif ^p)$ is the $\kp$ vector space $A\otimes \kp[\dif]/(\dif ^p)$ with multiplication given by the rules 
\begin{align*}
	(a\otimes1)(b\otimes 1) &= ab\otimes 1,\\
	(1\otimes h)(1\otimes g) &= 1\otimes hg,\\
	(a\otimes 1)(1 \otimes g) &= a \otimes g,\\
	(1\otimes \dif)(b\otimes 1) &= (b\otimes \dif)+\dif(b)\otimes 1,
\end{align*}
for $a,b\in A$, $h,g,  \dif  \in H=\kp[\dif]/(\dif ^p)$. Observe that the definition of $A_\dif$ depends on the $p$-dg algebra structure of $A$.
\begin{defn}
	Let $(A,\dif )$ be a $p$-dg algebra, $(M,\dif_M)$ and $(N,\dif_N)$ two $p$-dg $A$-modules. Two $p$-dg morphisms $f,g:M\to N$ are said to be \emph{homotopic} if there exists a morphism of $A$-modules $h:M\to N$ of degree $2-2p$ such that 
        \[
          f-g=\sum_{i=0}^{p-1} \dif_N^{p-1-i} \circ h \circ \dif^{i}_M.
	\]
	In particular, a morphism which is homotopic to the zero map is called \emph{null-homotopic}.
\end{defn}

Null-homotopic morphisms form an ideal in $A_\dif\amod$. Modding out by this ideal we obtain the \emph{homotopy} category $A_\dif\stmod$, often indicated by $\cC(A,\dif)$, which is triangulated.
Now consider the $p$-dg algebra $(\Bbbk,0)$.
Since every $p$-dg $(A,\dif)$-module is a $p$-dg $(\Bbbk,0)$-module there are two forgetful functors
\begin{align*}
	A_\dif\amod \rightarrow&\, \Bbbk_0 \amod, \\
	A_\dif\stmod \rightarrow&\, \Bbbk_0 \stmod.
\end{align*}
The objects of the categories $\Bbbk_0 \amod$ and $\Bbbk_0 \stmod$ are called \emph{p-complexes} in the literature. 
In particular, the isomorphism class in $\Bbbk_0 \stmod$ of a $p$-complex $P$ is referred to as the \emph{cohomology} of $P$. Similarly, for a $p$-dg $A$-module $M$ we can talk about its underlying $p$-complex when viewed as an object of $\Bbbk_0 \amod$, and consequently about its cohomology. This leads to the following definition:
\begin{defn}
Let $M$, $N$ be $p$-dg $A$-modules. A morphism $f:M\rightarrow N$ is said to be a \emph{quasi-isomorphism} if it is an isomorphism in the homotopy category $\Bbbk_{0}\stmod$. In particular, if $M \rightarrow 0$ or $0\rightarrow M$ are quasi-isomorphisms we say that $M$ is \emph{acyclic}.
\end{defn}
The quasi-isomorphisms in $A_\dif\stmod$ form a localizing class, inverting them leads to the \emph{derived category} $\cD(A,\dif)$, which we often denote $\cD(A)$ when no confusion can arise. This category is triangulated and Karoubian.

\subsection{\texorpdfstring{$p$}{p}-dg nilHecke algebras}
We continue with the review by covering the main definitions and results of~\cite[\S3.1-3.2 ]{KhQi}.
Let $\Pol_n\coloneq\Bbbk \left[x_1,\dotsc ,x_n\right]$ be the graded polynomial ring with $\deg(x_i)=2$. We write $\Sy_n$ for the symmetric group generated by the simple transpositions $s_1, \dotsc , s_{n-1}$, and $\Sym_n\coloneq\Pol_n^{\Sy_n}\subset\Pol_n$ for the (graded) subring of $\Sy_n$-invariants.
The balanced free left graded $\Pol_n$-module $\cP_n$ is
\[
\cP_n\coloneq\Pol_n\cdot v,
\]
where $v$ is the generator of $\cP_n$ with degree $\deg(v)=\frac{n(1-n)}{2}$.
The nilHecke algebra is the (graded) algebra of endomorphisms of the left $\Sym_n$-module $\cP_n$:
\begin{align}
\label{nh=end}
\nh_n\coloneq\End_{\Sym_n}(\cP_n).
\end{align}

This algebra is usually described in terms of generators and relations:
\begin{defn}
The nilHecke algebra $\nh_n$ is the $\Bbbk$-algebra with generators $T_1,\dotsc,T_{n-1}$ and $x_1,\dotsc,x_n$, subject to the  relations
\begin{gather}\label{eq:nh1}
	x_ix_j=x_jx_i,\quad T_i^2=0,\quad T_iT_j=T_jT_i \ \ \text{ if } \ |i-j|>1, \\[1ex]\label{eq:nh2}
	T_iT_{i+1}T_i=T_{i+1}T_iT_{i+1},\quad  x_iT_j=T_jx_i \ \ \text{if} \ i\neq j, j+1, \\[1ex] \label{eq:nh3}
	x_iT_i-T_ix_{i+1} =1,\quad T_ix_i-x_{i+1}T_i=1.
\end{gather}
\end{defn}
As $\Sym_n$-linear morphisms of $\cP_n$ each generator $x_i$ act as multiplication by the monomial $x_i$ and each generator $T_j$ acts as the divided difference operator
\[
T_j=\frac{1-s_j}{x_j-x_{j+1}},
\]
where $s_j\in \Sy_n$ acts on $\Pol_n$ by switching $x_j$ and $x_{j+1}$.
It follows that the nilHecke algebra is $\bZ$-graded with $\deg(x_i)=2$ and $\deg(T_j)=-2$.
The generators of the $\nh_n$ are usually depicted with string diagrams
\begin{align}
1=&
\tikzdiagh{3}{
	\draw (-.25,.5) -- (-.25,-.5) node[below] {\plusspacing \scriptsize $1$};
	\draw (0.25,.5) -- (0.25,-.5) node[below] {\plusspacing \scriptsize $2$};
	\node at (0.75,0) {$\dots$};
	\draw (1.25,.5) -- (1.25,-.5) node[below] {\plusspacing \scriptsize $n$};
}\\
T_i=&
\tikzdiagh{3}{
	\draw (-.25,.5) -- (-.25,-.5) node[below] {\plusspacing \scriptsize $1$};
	\draw (0.25,.5) -- (0.25,-.5) node[below] {\plusspacing \scriptsize $2$};
	\node at (0.75,0) {$\dots$};
	\draw (1.25,-.5) node[below] {\plusspacing \scriptsize $i$} .. controls (1.25,0) and (1.75,0) .. (1.75,+.5);
	\draw (1.25,.5) .. controls (1.25,0) and (1.75,0) .. (1.75,-.5) node[below] {\plusspacing \scriptsize $i{+}1$};
	\node at (2.25,0) {$\dots$};
	\draw (2.75,.5) -- (2.75,-.5) node[below] {\plusspacing \scriptsize $n$};
}\\
x_i=&
\tikzdiagh{3}{
	\draw (-.25,.5) -- (-.25,-.5) node[below] {\plusspacing \scriptsize $1$};
	\draw (0.25,.5) -- (0.25,-.5) node[below] {\plusspacing \scriptsize $2$};
	\node at (0.75,0) {$\dots$};
    \draw (1.25,.5) -- (1.25,-.5) node[below] {\plusspacing \scriptsize $i$} node[midway, tikzdot]{};
	\node at (1.75,0) {$\dots$};
	\draw (2.25,.5) -- (2.25,-.5) node[below] {\plusspacing \scriptsize $n$};
} 
\end{align}
The product $xy$ of $x,y \in \nh_n$ is given by stacking the diagram of $x$ atop the one of $y$:
\[
\tikzdiagh{-3}{
	\draw (0,.5) -- (0,-.5);
	\draw (.5,.5) -- (.5,-.5);
	\node at (1, .4) {$\dots$};
	\node at (1, -.4) {$\dots$};
	\draw (1.5,.5) -- (1.5,-.5);
	\filldraw [fill=white, draw=black] (-.25,-.25) rectangle (1.75,.25) node[midway] {$x$};
}
\cdot
\tikzdiagh{-3}{
	\draw (0,.5) -- (0,-.5);
	\draw (.5,.5) -- (.5,-.5);
	\node at (1, .4) {$\dots$};
	\node at (1, -.4) {$\dots$};
	\draw (1.5,.5) -- (1.5,-.5);
	\filldraw [fill=white, draw=black] (-.25,-.25) rectangle (1.75,.25) node[midway] {$y$};
}
=
\tikzdiagh{-3}{
	\draw (0,1) -- (0,-1);
	\draw (0.5,1) -- (0.5,-1);
	\draw (1.5,1) -- (1.5,-1);
	\filldraw [fill=white, draw=black] (-.25,.25) rectangle (1.75,.75) node[midway] {$x$};
	\filldraw [fill=white, draw=black] (-.25,-.75) rectangle (1.75,-.25) node[midway] {$y$};
	\node at (1, .9) {$\dots$};
	\node at (1, -.9) {$\dots$};
	\node at (1, 0) {$\dots$};
}
\]
For example the relation $x_iT_i-T_ix_{i+1}=1$ reads 
\[
\tikzdiagh{5}{
	\draw (1.25,-.5) node[below] {\plusspacing \scriptsize $i$} .. controls (1.25,0) and (1.75,0) .. (1.75,+.5);
	\draw (1.25,.5) .. controls (1.25,0) and (1.75,0) .. (1.75,-.5) node[near start, tikzdot]{} node[below] {\plusspacing \scriptsize $i{+}1$};
}
-
\tikzdiagh{5}{
	\draw (1.25,-.5) node[below] {\plusspacing \scriptsize $i$} .. controls (1.25,0) and (1.75,0) .. (1.75,+.5);
	\draw (1.25,.5) .. controls (1.25,0) and (1.75,0) .. (1.75,-.5) node[near end, tikzdot]{}  node[below] {\plusspacing \scriptsize $i{+}1$};
}=
\tikzdiagh{5}{
	\draw (0,.5) -- (0,-.5) node[below] {\plusspacing \scriptsize $i$};
	\draw (0.5,.5) -- (0.5,-.5) node[below] {\plusspacing \scriptsize $i{+}1$};
}
\]
Given $w\in \Sy_n$ with reduced expression $w=s_{i_1}s_{i_2}\dots s_{i_k}$, we define
\[
T_w\coloneq T_{i_1}T_{i_2}\dots T_{i_k} ,
\]
which by Matsumoto's theorem is independent of the reduced expression chosen for $w$.
For $w_0\in \Sy_n$ the longest element, there exists a primitive indecomposable idempotent 
$$\epsilon_n=(-1)^{\frac{n(n-1)}{2}}T_{w_0}x_1^0x_{2}^{1}\cdots x_n^{n-1},$$ 
which diagrammatically is depicted as
\[
\epsilon_n=(-1)^{\frac{n(n-1)}{2}}
\tikzdiagh{0}{
	\draw (0,1.75) .. controls (0,0) and (5,0) .. (5,-1.75);
	\draw (0,.75) .. controls (0,-.25) and (4,-.5) .. (4,-1.75);
	\draw (5,1.75) .. controls (5,0) and (0,0) .. (0,-1.75);
	\draw (0,.75) .. controls (0,1.25) and (1,1.25) .. (1,1.75);
	\draw (3,-1.75) .. controls (3,-.75) and (0,-.75) .. (0,.25);
	\draw (0,.25) .. controls (0,.75) and (2,.75) .. (2,1.75);
	\draw node[xshift=4.98cm,yshift=-1.55cm, tikzdot]{} node[midway, xshift=4.5cm,yshift=-1.7cm]{\tiny $n{-}1$};
	\draw node[xshift=3.95cm,yshift=-1.55cm, tikzdot]{} node[midway, xshift=3.5cm,yshift=-1.7cm]{\tiny $n{-}2$};
	\draw node[xshift=2.95cm,yshift=-1.55cm, tikzdot]{} node[midway, xshift=2.5cm,yshift=-1.7cm]{\tiny $n{-}3$};
}
\]
As in~\cite[\S6.2]{KhQiSussan} we define, for $\mathbf{i}=(i_1,\dotsc ,i_k)\in \mathbb{N}^k$, the idempotent
\[
\epsilon_{\mathbf{i}}\coloneq \epsilon_{i_1}\otimes\cdots \otimes \epsilon_{i_k}\in \nh_{i_1+i_2+\cdots +i_k}.
\]
The ring $\Pol_n$ can be endowed with a derivation defined by $\dif(x_i)=x_i^2$ for all $i=1,\dotsc,n$, which preserves the subring of symmetric polynomials. The derivation $\dif$ is $p$-nilpotent when the ground field is specialized to $\kp$ and in this case $(\Pol_n,\dif)$ and $(\Sym_n,\dif)$ become $p$-dg algebras over $\kp$. 
For $\alpha=(\alpha_1,\dotsc,\alpha_n)\in \Bbbk^n$, the $\Bbbk$-linear map
\[
\dif_{\alpha} (v)= \sum_{i=1}^n \alpha_i x_iv 
\]
together with the Leibniz rule, is a derivation on $\cP_n$. When we specialise $\alpha$ to $\bF_p^n$ the derivation $\dif_\alpha$ becomes a $p$-differential and $(\cP_n, \dif _{\alpha})$ a $p$-dg module over the $p$-dg algebra $(\Pol_n,\dif)$, or $(\Sym_n,\dif)$. As explained in~\cite[\S$5$]{Qi} there is a natural way to induce a $p$-dg structure on the morphism space of two $p$-dg modules. This allows us to define a differential on the $\kp$-algebra $\nh_n$, depending on $\alpha$.
The choice $\alpha_{i+1}-\alpha_{i}=a\in \bF_p$ makes the differential local, which implies that the canonical inclusion of algebras $\nh_n \hookrightarrow \nh_{n+1}$ is a morphism of $p$-dg algebras. With this choice we write the new differential as $\dif_a\colon \nh_n \to \nh_n$. Its action on the generators is given below:
\begin{equation}\label{eq:pdgx}
\dif_a \left(
	\tikzdiag{
		\draw (0,.5) -- (0,-.5) node[midway, tikzdot]{};	 
	 }
\right)=
\tikzdiag{
	\draw (0,.5) -- (0,-.5) node[near end, tikzdot]{} node[near start, tikzdot]{};
}
=
\tikzdiag{
	\draw (0,.5) -- (0,-.5) node[midway, tikzdot]{}  node[midway, xshift=1.5ex,yshift=0.75ex]{\scriptsize $2$};
}
\end{equation}
\begin{equation}\label{eq:pdgdemazure}
\dif_a \left(
	\tikzdiag{
		\draw (0,-.5) .. controls (0,0) and (.5,0) .. (.5,+.5);
		\draw (0,.5) .. controls (0,0) and (.5,0) .. (.5,-.5);	
	}
\right)=
a \ 
\tikzdiag{
	\draw (0,.5) -- (0,-.5);
	\draw (0.5,.5) -- (.5,-.5);
}
-(a+1) \ 
\tikzdiag{
	\draw (0,-.5) .. controls (0,0) and (.5,0) .. (.5,+.5);
	\draw (0,.5) .. controls (0,0) and (.5,0) .. (.5,-.5) node[near start, tikzdot]{};	
}
+(a-1) \ 
\tikzdiag{
	\draw (0,-.5) .. controls (0,0) and (.5,0) .. (.5,+.5) node[near end, tikzdot]{};
	\draw (0,.5) .. controls (0,0) and (.5,0) .. (.5,-.5);	
}
\end{equation}
As explained in~\cite[\S$4$]{Qi}~\cite[\S$2$]{KhQi} it is possible to define the compact derived category $\mathcal{D}^c(A)$ over a $p$-dg algebra $A$ and its Grothendieck group $\mathbf{K}_0(\mathcal{D}^c(A))=\mathbf{K}_0(A)$.
For the $p$-dg algebra $(\nh_n,\dif_a)$, when $a\in\{\pm 1\}$ and $n<p$, its Grothendieck group is equal to the ring of cyclotomic integers~\cite[Corollary 3.27]{KhQi},
\[
\mathbf{K}_0(\nh_n,\dif_{\pm 1})\cong \mathbb{O}_p.
\]
Most of the time we will work with $a=1$ as in~\cite{EliasQi2} (see remark right after Proposition $3.18$ in~\cite{KhQi}) and omit the subscript on the differential symbol $\dif$ when no confusion arises. With this choice  we have
\begin{equation}\label{eq:dTi}
\dif \left(
	\tikzdiag{
		\draw (0,-.5) .. controls (0,0) and (.5,0) .. (.5,+.5);
		\draw (0,.5) .. controls (0,0) and (.5,0) .. (.5,-.5);	
	}
\right)=
\ 
\tikzdiag{
	\draw (0,.5) -- (0,-.5);
	\draw (0.5,.5) -- (.5,-.5);
}
-2\ 
\tikzdiag{
	\draw (0,-.5) .. controls (0,0) and (.5,0) .. (.5,+.5);
	\draw (0,.5) .. controls (0,0) and (.5,0) .. (.5,-.5) node[near start, tikzdot]{};	
}=-
\tikzdiag{
	\draw (0,-.5) .. controls (0,0) and (.5,0) .. (.5,+.5);
		\draw (0.5,-.5) .. controls (0.5,0) and (0,0) .. (0,.5) node[near end, tikzdot]{};
}
-
\tikzdiag{
	\draw (0,-.5) .. controls (0,0) and (.5,0) .. (.5,+.5);
	\draw (0,.5) .. controls (0,0) and (.5,0) .. (.5,-.5) node[near end, tikzdot]{};	
}
\end{equation}
Using Equation $2.4$ in~\cite[\S2.3]{EliasQi2} it can be proved that 
\begin{equation}\label{eq:e_n}
\dif(\epsilon_n)=-\sum_{i=1}^n(n-i)x_i\epsilon_n.
\end{equation}

\subsection{The enhanced nilHecke algebra} 
One of the main ingredients in the program of categorification of Verma modules 
is the algebra $A_n$ which we describe below (see~\cite[\S2]{naissevaz2} for further details). 
Let $\bV(\und{\omega}_n) \coloneq \bV \left(\omega_{1},\dotsc, \omega_{n}\right)$ be the exterior algebra in variables $\omega_1,\dotsc,\omega_n$. The ring of extended polynomials is defined as $R_n \coloneq \Pol_n\otimes \bV(\und{\omega}_n)$. 
It has the structure of a $\bZ^2$ graded super ring, where the $x_i$ are even and the $\omega_i$ are odd. The $\bZ^2$-grading is defined by declaring $\deg(x_i)=(2,0)$ and $\deg(\omega_i)=(-2i,2)$. For an element $f$ of $R_n$ with degree $\deg(f)=(s,t)$, we call the two values $s$ and $t$ the \emph{$q$-grading} and the \emph{$\lambda$-grading} respectively. To emphasize just one of the two gradings we write either $\deg_q(f)=s$ or $\deg_\lambda(f)=t$.
The symmetric group $\Sy_n$ acts on $R_n$ via the permutation action on $\Pol_n$, while 
\begin{equation} \label{eq:symact}
	s_j(\omega_k)=\omega_k+\delta_{j,k}(x_k-x_{k+1})\omega_{k+1},
\end{equation}
where $s_j$ is the transposition exchanging $x_j$ with $x_{j+1}$ and $\delta_{j,k}$ is the Kronecker delta. 
This action preserves all gradings.
We write $R_n^{\Sy_n}$ for the subalgebra of $\Sy_n$-invariants of $R_n$. The algebra $R_n$ is a $(R_n^{\Sy_n},R_n^{\Sy_n})$-bimodule with basis $U_n \coloneq \Bbbk\left\langle x_1^{b_1}x_2^{b_2}\cdots x_n^{b_n} \ | \ 0\leq b_i \leq n-i \right\rangle$, which implies that
\begin{equation} \label{eq:Rn}
	R_n \cong R_n^{\Sy_n}\otimes U_n.
\end{equation}

\begin{defn}
The \emph{enhanced nilHecke algebra} is defined as the endomorphism ring of $R_n$ viewed as an $R_n^{\Sy_n}$-module
\[
A_n \coloneq \End_{R_n^{\Sy_n}}(R_n).
\]
\end{defn}
As a direct consequence of the isomorphism~\eqref{eq:Rn} the algebra $A_n$ is isomorphic to the matrix algebra of dimension $(n!)^2$ with $R_n^{\Sy_n}$ coefficients:
\[
A_n \coloneq \End_{R_n^{\Sy_n}}(R_n) \cong \End_{R_n^{\Sy_n}}( R_n^{\Sy_n}\otimes U_n)\cong R_n^{\Sy_n}\otimes \End_{\Bbbk}(U_n)\cong  \mat(n!, R_n^{\Sy_n}).
\]
\begin{prop} \label{defn:An}
  The algebra $A_n$ is generated by the elements $x_1,\dotsc,x_n$, $T_1,\dotsc,T_{n-1}$, and $\omega_1,\dotsc,\omega_n$, modulo the nilHecke relations
\begin{gather*}
	x_ix_j=x_jx_i,\qquad T_i^2=0,\qquad T_iT_j=T_jT_i \ \ \text{ if } \ |i-j|>1, \\[1ex]
	T_iT_{i+1}T_i=T_{i+1}T_iT_{i+1}, \qquad x_iT_j=T_jx_i \ \ \text{if} \ i\neq j, j+1, \\[1ex]
	x_iT_i-T_ix_{i+1} =1,\qquad T_ix_i-x_{i+1}T_i=1 ,
\end{gather*}
and the relations involving the $\omega$'s
\begin{gather}
	\label{eq:nhs1}
	\omega_i\omega_j=-\omega_j\omega_i, \\[1ex]
	\label{eq:nhs2}
	x_i\omega_j=\omega_jx_i,\qquad T_i\omega_j=\omega_jT_i \ \ \text{ if } \ i\neq j, \\[1ex]
	\label{eq:nhs3}
	T_i(\omega_i-x_{i+1}\omega_{i+1})=(\omega_i-x_{i+1}\omega_{i+1})T_i
\end{gather}
It is a $\bZ^2$-graded super algebra, where the $x_i$ and the $T_i$ are even, while the $\omega_i$ are odd and the $\bZ^2$ grading is given by $\deg(x_i)=(2,0)$, $\deg(T_j)=(-2,0)$, $\deg(w_i)=(-2i,2)$.
\end{prop}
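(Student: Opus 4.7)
The proof splits into verifying that the generators, interpreted as operators on $R_n$, satisfy the stated relations, and then a dimension argument. First I would let $x_i$ and $\omega_j$ act on $R_n$ by left multiplication, and $T_j$ act as the divided difference $(1-s_j)/(x_j - x_{j+1})$, well-defined on $R_n$ via~\eqref{eq:symact}. All three families are $R_n^{\Sy_n}$-linear and thus lie in $A_n$, with bigradings $(2,0)$, $(-2j,2)$, and $(-2,0)$ respectively, matching the claimed $\bZ^2$-grading.

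Next I would verify the relations inside $A_n$. The nilHecke relations follow from~\eqref{nh=end}. The exterior relation~\eqref{eq:nhs1} is built into $\bV(\und{\omega}_n) \subset R_n$; the commutations~\eqref{eq:nhs2} use $[x_i,\omega_j]=0$ together with $s_i(\omega_j)=\omega_j$ for $i\neq j$, which is read off from~\eqref{eq:symact}. The key relation~\eqref{eq:nhs3} reduces to the $s_i$-invariance of $\omega_i - x_{i+1}\omega_{i+1}$: applying~\eqref{eq:symact} one finds
\[
s_i(\omega_i - x_{i+1}\omega_{i+1}) = \omega_i + (x_i - x_{i+1})\omega_{i+1} - x_i \omega_{i+1} = \omega_i - x_{i+1}\omega_{i+1},
\]
and any $s_i$-invariant left-multiplication operator commutes with the divided difference $T_i$. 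This yields a surjective bigraded algebra morphism $\phi \colon \tilde A_n \twoheadrightarrow A_n$, where $\tilde A_n$ denotes the algebra defined by the stated presentation.

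To finish I would show $\phi$ is injective by producing a spanning set of $\tilde A_n$ of the correct size. Combining~\eqref{eq:nhs3} with the nilHecke relation $T_i x_{i+1} = x_i T_i - 1$ and the commutation $T_i\omega_{i+1} = \omega_{i+1}T_i$ gives the straightening rule
\[
T_i \omega_i = \omega_i T_i + (x_i - x_{i+1})\omega_{i+1}T_i - \omega_{i+1},
\]
which together with~\eqref{eq:nhs1}--\eqref{eq:nhs2} lets one push every $\omega$-letter to the left of any word in the $x$'s and $T$'s. Consequently $\tilde A_n$ is spanned by monomials of the form $\omega_S \cdot x^{\mathbf{a}} \cdot T_w$ with $S\subseteq\{1,\dots,n\}$, $\mathbf{a}\in\mathbb{N}^n$, $w\in\Sy_n$; under $\phi$ these map to a $R_n^{\Sy_n}$-basis of $A_n \cong \mat(n!, R_n^{\Sy_n})$ induced by the decomposition~\eqref{eq:Rn} combined with the PBW basis of $\nh_n \cong \End_{\Sym_n}(\cP_n)$.

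The main obstacle is confirming confluence of the straightening, i.e.\ that no further relations beyond~\eqref{eq:nhs1}--\eqref{eq:nhs3} are needed and that the monomials listed above are genuinely linearly independent in $A_n$. The precise combination $\omega_i - x_{i+1}\omega_{i+1}$ appearing in~\eqref{eq:nhs3} is exactly what is required here: any substitute that were not $s_i$-invariant would fail to hold in $A_n$, while any weaker intertwiner would yield a strict quotient of $\tilde A_n$ instead of an isomorphism.
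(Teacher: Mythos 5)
Your argument is correct in substance, but it is genuinely different from what the paper does: the paper's ``proof'' of~\autoref{defn:An} is a one-line citation to~\cite[Def.~2.3, Prop.~2.4, Cor.~2.32]{naissevaz2}, whereas you reconstruct the content of that reference. Your verification of the relations is right --- in particular the computation showing $s_i(\omega_i-x_{i+1}\omega_{i+1})=\omega_i-x_{i+1}\omega_{i+1}$ via~\eqref{eq:symact}, and the derived straightening rule $T_i\omega_i=\omega_iT_i+(x_i-x_{i+1})\omega_{i+1}T_i-\omega_{i+1}$, both check out --- and the overall scheme (surjection from the presented algebra $\tilde A_n$ onto $\End_{R_n^{\Sy_n}}(R_n)$, then injectivity by matching a spanning set against a basis) is exactly the strategy of~\cite{naissevaz2}. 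What your route buys is a self-contained proof; what the paper buys by citing is brevity, since the statement is imported background rather than a new result.

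Two points deserve tightening. First, your closing worry about ``confluence'' is misplaced: you do not need the rewriting system to be confluent, only terminating, because linear independence is checked in the target $A_n$ rather than abstractly in $\tilde A_n$. Concretely, if $\sum_S \omega_S f_S=0$ as an operator on $R_n$ with $f_S\in\nh_n$, evaluate on $\Pol_n\subset R_n$: each $f_S(p)$ lies in $\Pol_n$ and the monomials $\omega_S$ are $\Pol_n$-linearly independent in $R_n$, so every $f_S$ kills $\Pol_n$ and hence vanishes in $\nh_n$ by~\eqref{nh=end}. Second, the images of the monomials $\omega_S x^{\mathbf a}T_w$ (with $\mathbf a\in\bN^n$ unrestricted) form a $\Bbbk$-basis of $A_n$, consistent with~\eqref{eq:AnNhdecomposition}, not an $R_n^{\Sy_n}$-basis of $\mat(n!,R_n^{\Sy_n})$; over $R_n^{\Sy_n}$ the $\omega_S$ are not independent since the labelled $\omega_n^a$ already live in $R_n^{\Sy_n}$ by~\autoref{prop:Rnsym}. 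You also assert surjectivity of $\phi$ without comment; it follows because the image contains all multiplication operators by elements of $R_n=R_n^{\Sy_n}\otimes U_n$ together with the matrix units of $\End_\Bbbk(U_n)\subset\nh_n$, which generate $\mat(n!,R_n^{\Sy_n})$. With these adjustments the argument is complete.
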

We call the two gradings $(s,t)$ the $q$-grading and $\lambda$-grading, respectively. 
\begin{proof}
This is a summary of~\cite[Definition $2.3$, Proposition $2.4$, Corollary $2.32$]{naissevaz2}.
\end{proof}
There is a canonical inclusion $\nh_n\hookrightarrow A_n$ which makes $\nh_n$ a subalgebra of $A_n$ concentrated in even parity and trivial $\lambda$-degree.
The algebra $A_n$ is diagrammatically presented with the same diagrams used for the $\nh_n$ with the addition of \textit{floating dots} corresponding to the $\omega_i$:
\[
\omega_i = 
\tikzdiagh{5}{
	\draw (0,.5) -- (0,-.5);
	\node at (.5,0) {$\dots$};
	\draw (1,.5) -- (1,-.5) node[below] {\plusspacing \scriptsize $i$};
	\fdot{}{1.5,0};
	\draw (2,.5) -- (2,-.5) node[below] {\plusspacing \scriptsize $i{+}1$};
	\node at (2.5,0) {$\dots$};
	\draw (3,.5) -- (3,-.5);
}
\]
The relation~\eqref{eq:nhs1} is presented diagrammatically by
\[
\tikzdiagh{-3}{
	\node at (.5,0) {$\dots$};
	\draw (1,.5) -- (1,-.5);
	\fdot{}{1.5,.25};
	\draw (2,.5) -- (2,-.5);
	\node at (2.5,0) {$\dots$};
	\draw (3,.5) -- (3,-.5);
	\fdot{}{3.5,-.25};
	\draw (4,.5) -- (4,-.5);
	\node at (4.5,0) {$\dots$};
} =
-\tikzdiagh{-3}{
	\node at (.5,0) {$\dots$};
	\draw (1,.5) -- (1,-.5);
	\fdot{}{1.5,-.25};
	\draw (2,.5) -- (2,-.5);
	\node at (2.5,0) {$\dots$};
	\draw (3,.5) -- (3,-.5);
	\fdot{}{3.5,.25};
	\draw (4,.5) -- (4,-.5);
	\node at (4.5,0) {$\dots$};
}
\]
This diagram makes clear that the height of the floating dots is important. As a consequence of
\[
\omega_{i+1}=T_i\omega_iT_ix_{i+1}-x_iT_i\omega_iT_i ,
\]
which has the diagrammatic form 
\[
\tikzdiagh{-2}{
	\draw (0,1) -- (0,-1); 
	\draw (.5,1) -- (.5,-1);
	\fdot{}{1,0};
}=
\tikzdiagh{-2}{
	\draw (0,1) .. controls (0,.5) and (1,.5) .. (1,0) .. controls (1,-.5) and (0,-.5) .. (0,-1);
	\draw (1,1) .. controls (1,.5) and (0,.5) .. (0,0) .. controls (0,-.5) and (1,-.5) .. (1,-1) node[near end, tikzdot]{};
	\fdot{}{.5,0};
}
-
\tikzdiagh{-2}{
	\draw (0,1) .. controls (0,.5) and (1,.5) .. (1,0) node[near start, tikzdot]{} .. controls (1,-.5) and (0,-.5) .. (0,-1);
	\draw (1,1) .. controls (1,.5) and (0,.5) .. (0,0) .. controls (0,-.5) and (1,-.5) .. (1,-1);
	\fdot{}{.5,0};
}
\]
all the $\omega_i$ can be expressed using just $\omega_1$.
This results in a presentation of $A_n$ using only one odd generator.
\begin{prop}
The algebra $A_n$ has a presentation by even generators $T_1,\dotsc,T_{n-1}$, and $x_1,\dotsc,x_n$ and the odd generator $\theta$ where the $T_i$ and the $x_i$ satisfy the relations of the nilHecke algebra while 

\begin{gather*}
\theta^2 = 0,    
\\[1ex] 
\theta x_i = x_i\theta\mspace{20mu} \text{for all }i,
\\[1ex] 
\theta T_i = T_i\theta\mspace{20mu} \text{for }i>1,
\\[1ex] 
\theta T_1 \theta T_1 + T_1\theta T_1\theta = 0 .
\end{gather*}

\end{prop}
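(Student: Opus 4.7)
The plan is to show that the natural map $\phi\colon B_n \twoheadrightarrow A_n$ from the algebra $B_n$ with the given presentation to $A_n$, sending $\theta\mapsto \omega_1$ and the nilHecke generators to themselves, is an isomorphism.

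For well-definedness, the nilHecke relations and the first three $\theta$-relations hold directly by~\autoref{defn:An}. For the quartic relation $\omega_1 T_1\omega_1 T_1 + T_1\omega_1 T_1\omega_1 = 0$ in $A_n$, first rearrange \eqref{eq:nhs3} using $T_1 x_2 = x_1 T_1 - 1$ and $T_1\omega_2 = \omega_2 T_1$ (from \eqref{eq:nhs2}) to obtain
\[
T_1\omega_1 \;=\; \omega_1 T_1 + (x_1 - x_2)\,\omega_2 T_1 - \omega_2 .
\]
Substituting this identity into both $\omega_1 T_1\omega_1 T_1$ and $T_1\omega_1 T_1\omega_1$ and using $T_1^2 = 0$ and $\omega_2^2 = 0$, the sum collapses to $-(\omega_1\omega_2 + \omega_2\omega_1)T_1$, which vanishes by~\eqref{eq:nhs1}.

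Surjectivity of $\phi$ is immediate: its image contains $\nh_n$ and $\omega_1$, and the recursion $\omega_{i+1} = T_i\omega_i T_i x_{i+1} - x_i T_i\omega_i T_i$ displayed before the proposition inductively produces the remaining $\omega_i$. For injectivity, I would construct an inverse $\psi\colon A_n \to B_n$ from the presentation in~\autoref{defn:An} by sending $\omega_1\mapsto\theta$ and defining $\omega_{i+1}^B := T_i\omega_i^B T_i x_{i+1} - x_i T_i\omega_i^B T_i \in B_n$ inductively, then verifying that these elements satisfy \eqref{eq:nhs1}--\eqref{eq:nhs3}. The commutations \eqref{eq:nhs2} and \eqref{eq:nhs3} reduce to nilHecke bookkeeping combined with the three easy $\theta$-relations. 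The heart of the matter is the super-anticommutation $\omega_i^B \omega_j^B + \omega_j^B\omega_i^B = 0$ in \eqref{eq:nhs1}, which is deduced by induction from $\theta^2 = 0$ and $\theta T_1\theta T_1 = -T_1\theta T_1\theta$, essentially by running the well-definedness computation in reverse.

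The main obstacle is this last check. Each $\omega_i^B$ is a long word in $\theta$, the $T_j$ and the $x_k$; verifying the anticommutation requires pushing the $\theta$'s past each other and past the nilHecke generators so as to hit $\theta^2 = 0$ or the quartic relation. This is essentially a more elaborate version of the well-definedness computation and requires no new ideas beyond careful bookkeeping.
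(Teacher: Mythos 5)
The paper states this proposition without proof (it is presented as a consequence of the discussion of $\omega_{i+1}=T_i\omega_iT_ix_{i+1}-x_iT_i\omega_iT_i$ and, implicitly, of the structure theory of $A_n$ from~\cite{naissevaz2}), so there is no argument of the paper's to compare yours against; I can only assess your sketch on its own terms. The architecture (well-defined surjection $B_n\twoheadrightarrow A_n$, then an inverse built from the presentation of~\autoref{defn:An}) is the right one, and the parts you actually carry out are correct: I checked that $T_1\omega_1=\omega_1T_1+(x_1-x_2)\omega_2T_1-\omega_2$ holds in $A_n$ and that substituting it does collapse $\omega_1T_1\omega_1T_1+T_1\omega_1T_1\omega_1$ to $-(\omega_1\omega_2+\omega_2\omega_1)T_1=0$; surjectivity is likewise fine.

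The gap is in the injectivity step, precisely where you wave your hands. To verify~\eqref{eq:nhs1} for the recursively defined $\omega_{i+1}^B=T_i\omega_i^BT_ix_{i+1}-x_iT_i\omega_i^BT_i$, the cases $\omega_j^B\omega_{i+1}^B+\omega_{i+1}^B\omega_j^B=0$ with $j\leq i-1$ do follow cleanly by conjugating the level-$i$ identities with $T_i$. But the two essential cases, $(\omega_{i+1}^B)^2=0$ and $\omega_i^B\omega_{i+1}^B+\omega_{i+1}^B\omega_i^B=0$, require (running your level-one computation at index $i$) the \emph{quartic relation at index $i$}, namely $\omega_i^BT_i\omega_i^BT_i+T_i\omega_i^BT_i\omega_i^B=0$. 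Your presentation only imposes this for $i=1$ and $\theta$. The natural derivation of the index-$i$ quartic relation — writing $T_i\omega_i^BT_i=-\omega_{i+1}^BT_i$ and expanding — uses exactly the anticommutation relations at level $i+1$ that you are trying to prove, so the induction as you describe it ("running the well-definedness computation in reverse") is circular. One can instead try to verify the index-$2$ quartic relation directly by expanding $\omega_2^B=(x_2-x_1)T_1\theta T_1+\theta T_1-T_1\theta$ and reducing with the braid relation $T_1T_2T_1=T_2T_1T_2$, $\theta T_2=T_2\theta$ and the index-$1$ quartic relation, but that is a substantial new computation (and must then be iterated for all $i$), not bookkeeping already contained in the well-definedness check. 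You need either to supply these higher quartic relations, reorganize the induction so it closes, or replace the inverse-map construction by a spanning/dimension argument against the known basis $\bV(\und{\omega}_n)\otimes\nh_n$ of $A_n$ — which, as it stands, runs into the same missing rewriting rules.
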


The action of $A_n$ on the ring of extended polynomials $R_n$ extends the action of $\nh_n$ on the ring of polynomials. In particular for $x_i, \omega_i, T_j \in A_n$, $s_j \in \Sy_n$, $f\in R_n$ we have that:
\begin{gather*}
	x_i(f)=x_if,\\[1ex]
	\omega_i(f)=\omega_if, \\[1ex]
	T_j(f)=\frac{f-s_j(f)}{x_j-x_{j+1}}.
\end{gather*}

In~\cite[\S$2.4$]{naissevaz2} a mild generalization of the $\omega_i$, called \emph{labelled $\omega$} was introduced. 
They will be useful later, when we will study one particular $p$-dg structure on $R_n$.
For $j<k$ positive integers and $\ell$ an integer we denote by $\hs_\ell(j,k)$ the complete homogeneous symmetric polynomial of degree $\ell$ in the variables $x_j, x_{j+1}, \dotsc, x_k$, with the convention that $\hs_0(j,k)=1$ and $\hs_\ell(j,k)=0$ whenever $\ell<0$.
\begin{defn} \label{defn:labelomega}
For $1\leq a$ and $k \leq n$ we define $\omega_k^a$ to be the element of $R_n$ (and therefore of $A_n$) given by
\[
\omega_k^a\coloneq \sum_{\ell=1}^{k}(-1)^{a+k+\ell}\hs_{a+l-k}(\ell,k)\omega_{\ell} .
\]
\end{defn}
Our interest in labelled $\omega$'s lies in the proposition below (this is~\cite[Corollary 2.29]{naissevaz2}), that will be helpful to compute the Grothendieck group of $R_n$ as a $p$-dg algebra. 
To simplify notation we define the symbol $\bV(\und{\omega}_n^a)\coloneq \bV(\omega_n^0,\cdots \omega_n^{n-1})$.

\begin{prop} \label{prop:Rnsym}
There is an isomorphism of $\Bbbk$-algebras
\[
R_n^{\Sy_n}\cong \Pol_n^{\Sy_n}\otimes \bV(\und{\omega}_n^a) .
\]
\end{prop}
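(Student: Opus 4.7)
The plan is to produce an explicit algebra map from the right-hand side into $R_n^{\Sy_n}$ and verify it is an isomorphism grade by grade in the $\lambda$-grading. The candidate is the multiplication map
\[
\mu\colon \Pol_n^{\Sy_n}\otimes \bV(\und{\omega}_n^a)\longrightarrow R_n^{\Sy_n},
\]
where on the source the $\omega_n^a$ are treated as formal anticommuting generators.

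The first task is to check that every $\omega_n^a$ (for $a\geq 0$) really lies in $R_n^{\Sy_n}$. Using~\eqref{eq:symact}, only $s_j(\omega_j)=\omega_j+(x_j-x_{j+1})\omega_{j+1}$ deviates from a permutation, so $s_j(\omega_n^a)-\omega_n^a$ receives two contributions: the $s_j$-action on the coefficients $\hs_{a+\ell-n}(\ell,n)$ for $\ell\leq j$ (which are not symmetric in $x_j,x_{j+1}$), and the $(x_j-x_{j+1})\omega_{j+1}$-term produced by $s_j\omega_j$. A short calculation shows these cancel via the identity $\hs_m(j,n)-\hs_m(j+1,n)=(x_j-x_{j+1})\hs_{m-1}(j+1,n)$ combined with careful sign tracking. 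Since each $\omega_n^a$ is a $\Pol_n$-linear combination of the $\omega_\ell$ with even coefficients, the relations $(\omega_n^a)^2=0$ and $\omega_n^a\omega_n^b+\omega_n^b\omega_n^a=0$ are inherited from the corresponding relations on the $\omega_\ell$. Therefore $\mu$ is a well-defined homomorphism of bigraded super algebras.

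Injectivity is essentially combinatorial: for $0\leq a\leq n-1$, the coefficient of $\omega_\ell$ in $\omega_n^a$ vanishes for $\ell<n-a$ (since $\hs_{a+\ell-n}$ has negative degree) and equals $\pm 1$ for $\ell=n-a$. Ordering the generators as $(\omega_n^0,\ldots,\omega_n^{n-1})$ against $(\omega_n,\omega_{n-1},\ldots,\omega_1)$, the base-change matrix is triangular with unit diagonal, so exterior monomials in the $\omega_n^a$ form a $\Pol_n^{\Sy_n}$-linearly independent family inside $R_n$, using the freeness of $R_n$ over $R_n^{\Sy_n}$ recorded in~\eqref{eq:Rn}.

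The main obstacle is surjectivity. I would handle it by a bigraded Hilbert-series comparison. The bigraded Poincar\'e series of $R_n$ is $P_{\Pol_n}(q)\prod_{i=1}^n(1+q^{-2i}\lambda)$, and extracting $\Sy_n$-invariants — either via Molien's formula on the finite-rank exterior factor, or more cheaply by using~\eqref{eq:Rn} to count $\Sy_n$-fixed vectors in each $\lambda$-degree — yields $P_{\Pol_n^{\Sy_n}}(q)\prod_{a=0}^{n-1}(1+q^{-2n+2a}\lambda)$, which matches the Poincar\'e series of $\Pol_n^{\Sy_n}\otimes\bV(\und{\omega}_n^a)$. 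Together with injectivity this forces $\mu$ to be an isomorphism in each bidegree. A more hands-on alternative is induction on the $\lambda$-degree, peeling off the $\omega_n^{n-1}$-contribution (the unique generator whose expansion involves $\omega_1$) and reducing to an analogous problem on $\{x_2,\ldots,x_n\}$ and $\{\omega_2,\ldots,\omega_n\}$ with $\Sy_{n-1}$; the nuisance there is tracking the precise polynomial coefficients needed to enforce $s_1$-invariance, which is exactly the data encoded by the labelling.
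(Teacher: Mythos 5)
The paper itself does not prove this statement: it is quoted verbatim from~\cite[Corollary 2.29]{naissevaz2}. Your argument is therefore a genuine self-contained alternative, and its architecture is sound: the multiplication map is well defined once the $\omega_n^a$ are shown to be invariant and odd, injectivity follows from the unitriangular base change $\omega_n^a=\omega_{n-a}+(\text{terms in }\omega_{n-a+1},\dotsc,\omega_n)$ together with freeness of $R_n$ over $\Pol_n$, and surjectivity follows from the bigraded dimension count. The Hilbert-series step is legitimate and non-circular provided you take the route through~\eqref{eq:Rn}: since $R_n\cong R_n^{\Sy_n}\otimes U_n$ and $\Pol_n\cong\Pol_n^{\Sy_n}\otimes U_n$ with the same staircase factor $U_n$, dividing $P_{R_n}=P_{\Pol_n}\prod_{i=1}^n(1+q^{-2i}\lambda)$ by $P_{U_n}$ gives exactly the series of $\Pol_n^{\Sy_n}\otimes\bV(\und{\omega}_n^a)$. (The ``Molien on the exterior factor'' alternative is not really available as stated, because $\Sy_n$ does not act on $\bV(\omega_1,\dotsc,\omega_n)$ alone; stick with the $U_n$ route.)

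The one step that needs repair is the invariance check, where both of your stated ingredients are off. First, for $\ell\leq j$ the coefficient $\hs_{a+\ell-n}(\ell,n)$ involves \emph{both} $x_j$ and $x_{j+1}$ and is therefore $s_j$-symmetric; the only coefficient that is not fixed by $s_j$ is the one attached to $\omega_{j+1}$, namely $\hs_{a+j+1-n}(j+1,n)$, which contains $x_{j+1}$ but not $x_j$. Second, the identity you invoke is false: $\hs_m(j,n)-\hs_m(j+1,n)=x_j\,\hs_{m-1}(j,n)$, not $(x_j-x_{j+1})\hs_{m-1}(j+1,n)$ (test $m=1$). The identity actually needed compares two variable sets of the \emph{same} size, $\hs_m(x_{j+1},\dotsc,x_n)-\hs_m(x_j,x_{j+2},\dotsc,x_n)=(x_{j+1}-x_j)\,\hs_{m-1}(x_j,\dotsc,x_n)$, and with it the $s_j$-image of the $\omega_{j+1}$-coefficient plus the $(x_j-x_{j+1})\omega_{j+1}$ term coming from $s_j(\omega_j)$ cancel against the original $\omega_{j+1}$-coefficient, with matching signs from $(-1)^{a+n+\ell}$. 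Once this is corrected the proof goes through.
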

It follows from~\autoref{prop:Rnsym} that as $R_n^{\Sy_n}$-modules 
\begin{equation} \label{eq:R_niso}
	R_n\cong R_n^{\Sy_n}\otimes U_n \cong \Pol_n^{\Sy_n}\otimes \bV(\und{\omega}_n^a)\otimes U_n,
\end{equation}
and thus
\[
A_n = \End_{R_n^{\Sy_n}}(R_n) = \End_{R_n^{\Sy_n}}( R_n^{\Sy_n}\otimes U_n)\cong R_n^{\Sy_n}\otimes \End_{\Bbbk}(U_n).
\]
This implies that
\begin{equation} \label{eq:Andecomposition}
A_n \cong \bV (\und{\omega}_n^a)\otimes \Pol_n^{\Sy_n}\otimes \End_{\Bbbk}(U_n),
\end{equation} 
and, since $\nh_n\cong \Pol_n^{\Sy_n}\otimes \End_{\Bbbk}(U_n)$, we obtain an isomorphism 
\begin{equation} \label{eq:AnNhdecomposition}
A_n \cong \bV (\und{\omega}_n^a)\otimes \nh_n.
\end{equation}


\section{The \texorpdfstring{$p$}{p}-dg enhanced nilHecke algebra}\label{sec:pdgAn}

%

\subsection{A derivation on \texorpdfstring{$A_n$}{Rn}}
We would like to define $p$-differentials on the ring $R_n$ and on the algebra $A_n$ such that the inclusions $\Pol_n \subset R_n$ and $\nh_n \subset A_n$ become inclusions of $p$-dg algebras, where $\Pol_n$ and $\nh_n$ have Khovanov--Qi's $p$-dg structure given in~\eqref{eq:pdgx} and~\eqref{eq:pdgdemazure} above.
Recall the balanced free left $\Pol_n$ $p$-dg module $\cP_n=\Pol_n.v$ from the previous section.
Recall also that $\deg(v)=\frac{n(1-n)}{2}$ and $\dif_{a}(v)=g_\alpha.v$, with $g_\alpha=\sum_{i=1}^n\alpha_ix_i$ and $\alpha_{i+1}-\alpha_i=a \in \mathbb{F}_p$. 
From the action of $\nh_n$ on $\cP_n$ induced from the one on $\Pol_n$,
where $T_i(f.v)=T_i(f).v$, one induces a
$p$-differential on $\nh_n$ such that $\cP_n$
becomes a $p$-dg module over the $p$-dg algebra $\nh_n$.
We will use this procedure on $R_n$ and $A_n$. We will work over the base field $\Bbbk$, and we first work out derivation that become $p$-differentials once we specialize the parameters and the base field in~\autoref{prop:pnilp}.

First we form the balanced free left $R_n$-module
\[
\cR_n = \cP_n\otimes \bV(\und{\omega}) = R_n.v ,
\]
where $v$ is as above. For $\xi \in A_n=\End_{R^{\Sy_n}}(\cR_n)$ and $f\in \cR_n$, we require the derivation $\dif$ on $A_n$ to satisfy
\[
\dif(\xi )(f)=\dif(\xi (f))-\xi (\dif(f)),
\]
and to restrict to Khovanov--Qi's derivation $\dif_{a}$ on $\nh_n$. These restrictions, together with the fact that $\omega_{i+1} = (-1)^{i}T_{i}\dotsm T_2T_1(\omega_1)$~\cite[\S2.1.1]{naissevaz2}, allow us to compute
\begin{align*}
\dif_{a}(T_i)(\omega_i.v) &= \dif(T_i(\omega_i.v))-T_i(\dif(\omega_i.v))\\
	                         &=-\dif(\omega_{i+1}.v)-T_i(\dif(\omega_i.v)),
\end{align*}
which implies that
\begin{align*}
\dif(\omega_{i+1}.v) &= -\dif_{a}(T_i)(\omega_i.v)-T_i(\dif(\omega_i.v))\\
                    &= -\bigl( (x_i+x_{i+1})\omega_{i+1} + T_i(\dif \omega_i)\bigr).v +\omega_{i+1}g_\alpha.v.
\end{align*} 
Therefore, since $\dif(\omega_{i+1}.v)=\dif(\omega_{i+1}).v+\omega_{i+1}g_\alpha .v$, the action of the derivation $\dif$ on the exterior part of $R_n$ can be defined as
\begin{equation} \label{eq:pdiff1}
\dif(\omega_{i+1}) = - (x_i+x_{i+1})\omega_{i+1} - T_i(\dif \omega_i) .
\end{equation}
Note that $\dif(\omega_{i+1})$ does not depend on $a$ and it is uniquely determined by the $q$-degree zero element $\dif(\omega_1)\in R_n$.

\begin{rem} \label{rem:pdiff1}
    Equation \eqref{eq:pdiff1} implies that $s_i\dif(\omega_i)=\dif(s_i\omega_i)$, indeed  
    \begin{align*}
    \dif\left(s_i\omega_i\right)&=\dif\left(\omega_i+(x_i-x_{i+1})\omega_{i+1}\right)=\dif\left(\omega_i \right)+(x_i^2-x_{i+1}^2 )\omega_{i+1}+(x_i-x_{i+1})\dif\left( \omega_{i+1}\right)\\
    &=\dif\left(\omega_i \right) -(x_i-x_{i+1})T_i(\dif \left( \omega_i \right))
    =\dif\left(\omega_i \right) -(x_i-x_{i+1})\frac{\dif(\omega_i)-s_i \dif(\omega_i)}{x_i-x_{i+1}} 
    =s_i\dif\left(\omega_i \right).
\end{align*}
\end{rem}

\begin{prop}
    Suppose that $s_j\dif\left(\omega_1 \right)=\dif\left(s_j\omega_1 \right)$ holds for $2\leq j \leq n-1$. 
    Then the derivation $\dif$ is $\Sy_n$-equivariant. 
\end{prop}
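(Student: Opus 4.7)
Since $s_j$ acts on $R_n$ by algebra automorphisms, the conjugate $s_j\dif s_j$ is again a derivation of $R_n$, so $\dif$ commutes with $s_j$ if and only if the two derivations $\dif$ and $s_j\dif s_j$ agree on an algebra generating set of $R_n$, which we take to be $x_1,\dotsc,x_n,\omega_1,\dotsc,\omega_n$. On the polynomial generators this follows from $\dif(x_i)=x_i^2$ and $s_j(x_i^2)=(s_jx_i)^2$. On $\omega_k$, the case $j=k$ is exactly \autoref{rem:pdiff1}. For $j\neq k$ we have $s_j\omega_k=\omega_k$, so the required equality becomes $s_j(\dif\omega_k)=\dif\omega_k$, equivalently $T_j(\dif\omega_k)=0$. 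Indeed $R_n$ is free over $\Pol_n$ and $(x_j-x_{j+1})T_j(f)=f-s_jf$, so $T_j(f)=0\iff s_jf=f$.

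The plan is to prove $T_j(\dif\omega_k)=0$ for every $j\neq k$ by induction on $k$. The base case $k=1$ is the hypothesis (using $s_j\omega_1=\omega_1$ for $j\geq 2$). For the inductive step, apply $T_j$ to the recursion \eqref{eq:pdiff1}:
\[
T_j(\dif\omega_{k+1})=-T_j\bigl((x_k+x_{k+1})\omega_{k+1}\bigr)-T_jT_k(\dif\omega_k).
\]
When $j=k$: $T_k^2=0$ and $T_k(x_k+x_{k+1})=1-1=0$, so the right-hand side vanishes. When $|j-k|\geq 2$ with $j\neq k+1$: the generator $T_j$ commutes with $T_k$ in $\nh_n$ and annihilates both $x_k+x_{k+1}$ and $\omega_{k+1}$, so the inductive hypothesis $T_j(\dif\omega_k)=0$ closes the case.

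The main obstacle is the remaining case $j=k-1$, since $s_{k-1}$ neither fixes $x_k$ nor commutes with $T_k$. A direct computation yields $T_{k-1}\bigl((x_k+x_{k+1})\omega_{k+1}\bigr)=-\omega_{k+1}$, hence $T_{k-1}(\dif\omega_{k+1})=\omega_{k+1}-T_{k-1}T_k(\dif\omega_k)$, and we must verify $T_{k-1}T_k(\dif\omega_k)=\omega_{k+1}$. Expanding $\dif\omega_k$ again via \eqref{eq:pdiff1} and using $T_{k-1}(\omega_k)=0$ together with $T_{k-1}\bigl((x_{k-1}+x_{k+1})\omega_{k+1}\bigr)=\omega_{k+1}$, this identity becomes $T_{k-1}T_kT_{k-1}(\dif\omega_{k-1})=0$. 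The nilHecke braid relation $T_{k-1}T_kT_{k-1}=T_kT_{k-1}T_k$ then reduces the problem to $T_kT_{k-1}\bigl(T_k(\dif\omega_{k-1})\bigr)=0$, which holds by the inductive hypothesis applied at index $k-1$ with the choice $j=k\neq k-1$. This closes the induction and proves the proposition.
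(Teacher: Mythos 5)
Your proof is correct and follows essentially the same route as the paper's: both induct on the index of $\omega$, expand via the recursion \eqref{eq:pdiff1}, dispose of the case $j=i$ by \autoref{rem:pdiff1} and of $|j-i|\neq 1,2$ by locality, and resolve the hard case $j=i-2$ by a double application of the recursion that reduces to the induction hypothesis two steps back. Your reformulation of $s_j$-invariance as $T_j(\dif\omega_k)=0$ and your explicit appeal to the braid relation $T_{k-1}T_kT_{k-1}=T_kT_{k-1}T_k$ only make explicit what the paper's invariance argument uses implicitly.
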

By $\Sy_n$-equivariant we mean that $s_j\dif\left(\omega_{i} \right)=\dif\left(s_j\omega_{i} \right)$ for any $1\leq i \leq n$ and $1\leq j \leq n-1$.
\begin{proof}
When $j\neq i$ we have that $s_j\omega_i=\omega_i$, and we just need to prove that $s_j\dif(\omega_i)=\dif(\omega_i)$.
When $i=1$ \autoref{rem:pdiff1} assures that $s_1\dif(\omega_1)=\dif(s_1\omega_1)$.
When $i=2$ the thesis is trivial for $j=1$, since from~\eqref{eq:pdiff1} it is clear that $\dif(\omega_2)$ is $s_1$ invariant. For $j=2$ we use again~\autoref{rem:pdiff1}. While, for $j\geq 3$ we use the hypothesis $s_j\dif(\omega_1)=\dif(s_j\omega_1)=\dif(\omega_1)$: 
\begin{align*}
    s_j\dif(\omega_2)&=-(x_1+x_2)\omega_2-s_jT_1(\dif(\omega_1))\\
        &=-(x_1+x_2)\omega_2-T_1(s_j\dif(\omega_1))\\
        &=-(x_1+x_2)\omega_2-T_1(\dif(\omega_1))=\dif(\omega_2)=\dif(s_j\omega_2).
\end{align*}
Now we consider $i$ arbitrary, when $j\leq i-3$ the induction step is proved easily using~\eqref{eq:pdiff1} and the induction hypothesis, as we just did for $i=2$ and $j\geq 3$. For $j=i-2$, applying twice~\eqref{eq:pdiff1} we obtain the equation
\begin{align*}
    \dif(\omega_{i})=-(x_{i-2}+x_{i-1}+x_i)\omega_i + \omega_{i-1}+ T_{i-1}T_{i-2}\left(\dif(\omega_{i-2})\right).
\end{align*}
Thanks to the induction hypothesis we know that $s_{i-1}\dif(\omega_{i-2})=\dif(s_{i-1}\omega_{i-2})$, which implies that $\dif(\omega_{i-2})$ is $s_{i-1}$ invariant, and thus $T_{i-1}T_{i-2}\left(\dif(\omega_{i-2})\right)$ is $s_{i-2}$ invariant. So, $\dif(\omega_i)$ is $s_{i-2}$ invariant.
The remaining cases are immediate. For $j=i-1$ it is clear from~\eqref{eq:pdiff1} that $s_{i-1}\dif(\omega_i)=\dif(\omega_i)$. For $j=i$ we use~\autoref{rem:pdiff1}, and for $j\geq i+1$ the computation is the same that we did for $i=2$ and $j\geq3$.
\end{proof}

\begin{rem} \label{rel:diszero}
One might wonder why not define the derivation on $R_n$ to act trivially on the exterior part and having $\dif(x_i)=x_i^2$ for $1\leq i \leq n$. The reason against it is that when $n>1$ this derivation does not commute with the action of the symmetric group $\Sy_n$, and thus it does not restrict to a derivation on $R_n^{\Sy_n}$, which is necessary to induce a derivation on the algebra $A_n=\End_{R_n^{\Sy_n}}(\cR_n)$. 
\end{rem}

\begin{prop} \label{prop:kpowers}
For each $k\geq 1$ and $1 \leq i \leq n-1$ we have
\begin{equation*}
\dif^k\omega_{i+1}=-k(k-1)x_ix_{i+1}T_i(\dif^{k-2}\omega_i)+k(x_i+x_{i+1})T_i(\dif^{k-1}\omega_i)-T_i(\dif^k\omega_i),
\end{equation*}
where $\dif^{-1}$ and $\dif^0=1$ act as the identity by convention.
\end{prop}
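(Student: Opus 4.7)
The plan is to proceed by induction on $k$. The base case $k=1$ is immediate from the defining equation~\eqref{eq:pdiff1}: rewriting $-\omega_{i+1}=T_i(\omega_i)$ turns that equation into the stated formula with $k=1$, where the $k(k{-}1)$ coefficient kills the would-be $\dif^{-1}$ term.

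The engine of the induction is a commutation identity between $\dif$ and $T_i$ viewed as operations on $R_n$. Starting from the tautology $(x_i-x_{i+1})T_i(f)=f-s_i(f)$ valid for every $f\in R_n$, one applies $\dif$, uses $\dif(x_i-x_{i+1})=(x_i+x_{i+1})(x_i-x_{i+1})$, and invokes the $\Sy_n$-equivariance of $\dif$ (established in the proposition just above our statement, so that $\dif\circ s_i=s_i\circ \dif$) to obtain
\[
\dif(T_i(f))=T_i(\dif f)-(x_i+x_{i+1})T_i(f).
\]
This is the single new fact needed; everything else is bookkeeping.

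For the inductive step, I will apply $\dif$ to the right-hand side of the formula for $k$ and verify that the result equals the right-hand side for $k+1$. Two cancellations do almost all the work. First, the Leibniz contribution to $\dif\bigl(x_ix_{i+1}T_i(g)\bigr)$ produces the quadratic factor $x_ix_{i+1}(x_i+x_{i+1})T_i(g)$, which is exactly cancelled by the $-(x_i+x_{i+1})$ correction coming from the commutation identity, leaving only $x_ix_{i+1}T_i(\dif g)$. Second, the same mechanism applied to $\dif\bigl((x_i+x_{i+1})T_i(g)\bigr)$ together with the identity $(x_i^2+x_{i+1}^2)-(x_i+x_{i+1})^2=-2x_ix_{i+1}$ produces $-2x_ix_{i+1}T_i(g)+(x_i+x_{i+1})T_i(\dif g)$. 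Collecting coefficients, the coefficient of $x_ix_{i+1}T_i(\dif^{k-1}\omega_i)$ becomes $-k(k-1)-2k=-k(k+1)$, and the coefficient of $(x_i+x_{i+1})T_i(\dif^k\omega_i)$ becomes $k+1$, reproducing the claimed formula with $k$ replaced by $k+1$.

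The only conceptual obstacle is isolating and proving the commutation identity; once that is in hand the inductive step reduces to a short computation with elementary symmetric identities in $x_i$ and $x_{i+1}$, and no further input is required.
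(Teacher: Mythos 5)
Your proof is correct and takes essentially the same route as the paper's: both arguments run an induction on $k$ powered by the commutation identity $\dif(T_i(f))=T_i(\dif f)-(x_i+x_{i+1})T_i(f)$, which is precisely the paper's equation~\eqref{eq:dt}, and the bookkeeping in your inductive step matches. The only difference is how that identity is obtained --- the paper reads it off from $\dif(T_i)=-(x_i+x_{i+1})T_i$ acting on $\cR_n=R_n.v$ with $\dif'v=0$, whereas you deduce it from $s_i$-equivariance of $\dif$; be aware that the $\Sy_n$-equivariance proposition you invoke is stated only conditionally at that point in the paper (and you need it on the iterates $\dif^{j}\omega_i$, not just on $\omega_i$ where the unconditional \autoref{rem:pdiff1} suffices), so the paper's derivation is the more self-contained one.
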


\begin{proof}
Since $(R_n, \dif)$ is isomorphic to $(\cR_n=R_n.v,\dif')$ as a module over itself, where $\dif' v=0$, we will prove the proposition for $\cR_n$. In this setting,~\eqref{eq:pdgdemazure} together with $\dif'(T_i)(f.v)=\dif'(T_i(f).v)-T_i(\dif'(f.v))$ give us the equality
    \begin{equation} \label{eq:dt}
        \dif'(T_i(f.v))=-(x_i+x_{i+1})T_i(f.v)+T_i(\dif' (f.v)).
    \end{equation}
Using this result and the fact that $T_i(-\omega_i.v)=\omega_{i+1}.v$ we can check the statement for $k=1$,
	\begin{align*}
		&\dif'\omega_{i+1}.v=(x_i+x_{i+1})T_i(\omega_i.v)-T_i(\dif'\omega_i.v).
	\end{align*}
Proving the inductive step by computing $\dif'\dif'^{k-1}(\omega_{i+1})$ using~\eqref{eq:dt} is straightforward, and it gives
\begin{equation*}
		\dif'^k\omega_{i+1}.v=-k(k-1)x_ix_{i+1}T_i(\dif'^{k-2}\omega_i.v)+k(x_i+x_{i+1})T_i(\dif'^{k-1}\omega_i.v)-T_i(\dif'^k\omega_i.v).
\qedhere
 \end{equation*}
\end{proof}

\smallskip

Our next task is to find a derivation $\dif_n$ on $R_n$ which satisfies~\eqref{eq:pdiff1}, it is $\Sy_n$-equivariant, and has the following form:
    \begin{equation} \label{eq:dif_n}
        \begin{split}
         \dif_n(x_i) &= x_i^2 \mspace{90mu} \text{if } 1\leq i \leq n , \\
         \dif_n(\omega_i) &= \sum_{j=i+1}^n\alpha_{i,j}\omega_j \mspace{24mu}  \text{if } 1\leq i \leq n-1 , \\
         \dif_n(\omega_n) &= 0 ,
        \end{split}
    \end{equation}
where each $\alpha_{i,j}$ is a homogeneous polynomial of $q$-degree $2j+2$.
    
Note that such $\dif_n$ depends on $n$, it extends the one defined on $\Pol_n$ and it preserves the $\lambda$-grading. This last statement follows directly by the fact that $\dif_n$ preserves the $\lambda$-grading on the generators, and thus on all homogeneous elements, as a consequence of the Leibniz's rule.

\begin{prop}
The polynomials $\alpha_{i,j}$  are such that
\begin{equation}\label{eq:pdiff2}
	T_i(\alpha_{i,j})=
	\begin{cases}
		-(x_{i}+x_{i+1}) \quad & \text{if } j=i+1, \\
		-\alpha_{i+1,j} \quad & \text{if } j>i+1.
	\end{cases}
\end{equation}
\end{prop}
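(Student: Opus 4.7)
The plan is to derive \eqref{eq:pdiff2} by substituting the ansatz \eqref{eq:dif_n} into the fundamental relation \eqref{eq:pdiff1} and comparing coefficients of the basis elements $\omega_j$ of $\bV(\und{\omega}_n)$. Recall that \eqref{eq:pdiff1} reads
\[
\dif_n(\omega_{i+1}) = -(x_i+x_{i+1})\omega_{i+1} - T_i(\dif_n \omega_i),
\]
and by the ansatz the left-hand side equals $\sum_{j=i+2}^{n}\alpha_{i+1,j}\omega_j$.

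First I would compute the right-hand side. Substituting $\dif_n(\omega_i) = \sum_{j=i+1}^{n}\alpha_{i,j}\omega_j$ gives
\[
T_i(\dif_n \omega_i) = \sum_{j=i+1}^{n} T_i(\alpha_{i,j}\omega_j).
\]
The key observation is that for every $j\geq i+1$, the generator $\omega_j$ is $s_i$-invariant: from \eqref{eq:symact}, $s_i(\omega_j) = \omega_j + \delta_{i,j}(x_i-x_{i+1})\omega_{j+1} = \omega_j$ since $j\neq i$. Because the $T_i$ acting on $R_n$ is the divided difference $T_i(f) = (f - s_i(f))/(x_i-x_{i+1})$, this $s_i$-invariance allows the $\omega_j$ to be factored out of the divided difference, yielding
\[
T_i(\alpha_{i,j}\omega_j) = T_i(\alpha_{i,j})\,\omega_j \quad\text{for all } j\geq i+1.
\]

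Substituting back and rearranging, the relation \eqref{eq:pdiff1} becomes
\[
\sum_{j=i+2}^{n}\alpha_{i+1,j}\omega_j
= -\bigl((x_i+x_{i+1}) + T_i(\alpha_{i,i+1})\bigr)\omega_{i+1}
- \sum_{j=i+2}^{n} T_i(\alpha_{i,j})\,\omega_j.
\]
Since $\omega_1,\dotsc,\omega_n$ are linearly independent generators of the exterior algebra $\bV(\und{\omega}_n)$, we may equate coefficients of each $\omega_j$. The coefficient of $\omega_{i+1}$ must vanish on the left, giving $T_i(\alpha_{i,i+1}) = -(x_i+x_{i+1})$, and for $j\geq i+2$ comparison gives $T_i(\alpha_{i,j}) = -\alpha_{i+1,j}$. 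These are exactly the two cases of \eqref{eq:pdiff2}.

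There is no serious obstacle: the entire argument reduces to carefully tracking that $s_i$ fixes every $\omega_j$ with $j>i$, so that $T_i$ passes through the exterior generators as a scalar operator. The only subtlety worth flagging is verifying $s_i$-invariance directly from \eqref{eq:symact} before factoring $\omega_j$ out of the divided difference; once this is in place the remainder is a routine coefficient comparison in the free $\Pol_n$-module structure of $R_n$.
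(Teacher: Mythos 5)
Your proof is correct and follows essentially the same route as the paper: substitute the ansatz \eqref{eq:dif_n} into \eqref{eq:pdiff1} and compare coefficients of the $\omega_j$ in the free $\Pol_n$-module $R_n$. The only difference is that you explicitly justify the step $T_i(\alpha_{i,j}\omega_j)=T_i(\alpha_{i,j})\omega_j$ via the $s_i$-invariance of $\omega_j$ for $j>i$, which the paper uses without comment --- a welcome addition rather than a divergence.
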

\begin{proof}
We replace the derivations in~\eqref{eq:pdiff1} using the definition of $\dif_n$ in~\eqref{eq:dif_n} to obtain 
\begin{align*}
	\sum_{j=i+2}^n\alpha_{i+2,j}\omega_j=\dif_n\omega_{i+1}&=-\Bigl( (x_i+x_{i+1})\omega_{i+1} + T_i(\dif_n\omega_i)\Bigr)\\
	&=-\Bigl( (x_i+x_{i+1})\omega_{i+1} + T_i\bigl( \sum_{j=i+1}^n \alpha_{i,j} \omega_{j}\bigr) \Bigr)\\
    &=-\Bigl((x_i+x_{i+1})\omega_{i+1} +\sum_{j=i+1}^nT_i(\alpha_{i,j})\omega_j\Bigr).
\end{align*}
The proposition follows as a consequence of the equation
\begin{equation*} 
    \sum_{j=i+2}^n\alpha_{i+2,j}\omega_j=-\Bigl((x_i+x_{i+1})\omega_{i+1} +\sum_{j=i+1}^nT_i(\alpha_{i,j})\omega_j\Bigr).
\qedhere
\end{equation*}
\end{proof}

We require $\dif_n$ to be $\Sy_n$-equivariant because later on we need it to restrict to a $p$-differential on the subalgebra of $\Sy_n$-invariants of $R_n$. Equating $s_l\dif_n(\omega_i)$ with $\dif_n(s_l\omega_i)$, for $s_l\in\Sy_n$ the transposition $(l,l+1)$, we obtain that
\begin{align} 
\label{eq:pdiff3}
	T_{j}(\alpha_{i,j+1}) &= \alpha_{i,j} && \text{for }  i+1\leq j \leq n-1  \\
\label{eq:pdiff32}
    s_l(\alpha_{i,j+1}) &= \alpha_{i,j+1} && \text{for } l\neq j,i \text{ and } i+1\leq j \leq n-1 .
\end{align}
This and the previous proposition can be summarized in the diagram below.
\begin{center}
\begin{tikzpicture}
	\matrix[matrix of math nodes,column sep={80pt,between origins},row
    sep={50pt,between origins},nodes={asymmetrical rectangle}] (s)
  {
  &|[name=11]| \vdots  &|[name=12]| \vdots &|[name=13]|\vdots &|[name=14]| &|[name=15]| \vdots\\
  &|[name=21]| (x_{i-1}+x_i)  &|[name=22]|\alpha_{i,i+1}  &|[name=23]|\alpha_{i,i+2} &|[name=24]| \cdots & |[name=25]| \alpha_{i,n} \\
  &|[name=31]|   &|[name=32]|(x_{i}+x_{i+1})  &|[name=33]|\alpha_{i+1,i+2} &|[name=34]| \cdots & |[name=35]| \alpha_{i+1,n} \\
  &|[name=41]|  &|[name=42]|  &|[name=43]| \vdots&|[name=44]| &|[name=45]|\vdots \\
  &|[name=51]|  &|[name=52]|  &|[name=53]| &|[name=54]| &|[name=55]| \alpha_{n-1,n} \\ 
  &|[name=61]|  &|[name=62]|  &|[name=63]| &|[name=64]| &|[name=65]|(x_{n-1}+x_n) \\   
  };
  \draw[->] 
  (11)edge node[auto, left] {\small$-T_{i-1}$} (21)
  (12)edge node[auto, left] {\small$-T_{i-1}$} (22)
  (13)edge node[auto, left] {\small$-T_{i-1}$} (23)
  (15)edge node[auto, left] {\small$-T_{i-1}$} (25)
  (22)edge node[auto, left] {\small$-T_{i}$} (32)
  (23)edge node[auto, left] {\small$-T_{i}$} (33)
  (25)edge node[auto, left] {\small$-T_{i}$} (35)
  (23)edge node[auto, above] {\small$T_{i+1}$} (22)
  (24)edge node[auto, above] {\small$T_{i+2}$} (23)
  (25)edge node[auto, above] {\small$T_{n-1}$} (24)
  (34)edge node[auto, above] {\small$T_{i+2}$} (33)
  (35)edge node[auto, above] {\small$T_{n-1}$} (34)
  (33)edge node[auto, left] {\small$-T_{i+1}$} (43)
  (35)edge node[auto, left] {\small$-T_{i+1}$} (45)
  (45)edge node[auto, left] {\small$-T_{n-2}$} (55)
  (55)edge node[auto, left] {\small$-T_{n-1}$} (65)
  ;
\end{tikzpicture}
\end{center}
The action of any other $T_j$ which does not appear in the picture is zero. Moreover, all facets in the diagram commute. It becomes clear that once we have defined $\alpha_{1,n}$ we can use the Demazure operators to obtain all the other $\alpha_{i,j}$ for $1\leq i \leq n-1$ and $2\leq j \leq n$.
\begin{defn} \label{def:alpha_ij}
    We define the polynomials
    \[
        \alpha_{i,j}=\sum_{l=j}^n x_l^2\prod_{k=i+1}^{j-1}(x_{k}-x_l) \mspace{40mu} \text{for} \quad 1\leq i < j \leq n,
    \]
    where we use the convention that $\prod_{k=i+1}^{i}(x_k-x_l)=1$.
\end{defn}
\begin{prop} \label{prop:pdiff}
The polynomials $\alpha_{i,j}$ satisfy the relations given in~\eqref{eq:pdiff2},~\eqref{eq:pdiff3} and~\eqref{eq:pdiff32}.\end{prop}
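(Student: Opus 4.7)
The three relations can all be established by direct computation from the closed form
\[
\alpha_{i,j} = \sum_{l=j}^n x_l^2 \prod_{k=i+1}^{j-1}(x_k - x_l),
\]
together with the Leibniz-type rule $T_i(fg)=T_i(f)g+s_i(f)T_i(g)$ and the basic identities $T_i(x_i)=1$, $T_i(x_{i+1})=-1$, $T_i(x_l)=0$ for $l\neq i,i+1$. The plan is to treat the three relations in the order \eqref{eq:pdiff32}, \eqref{eq:pdiff2}, \eqref{eq:pdiff3}, because the first one isolates the variables in play and simplifies the rest.

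\textbf{Invariance \eqref{eq:pdiff32}.} I first note that $\alpha_{i,j+1}$ is a polynomial in $x_{i+1},\dots,x_n$, so $s_l$ fixes it trivially when $l<i$. For $i+1\leq l\leq j-1$ both $x_l$ and $x_{l+1}$ occur only through the factors $(x_l-x_m)(x_{l+1}-x_m)$ inside the product (for each summand index $m\geq j+1$), and this product of two factors is $s_l$-invariant. For $j+1\leq l\leq n-1$ the transposition $s_l$ simply permutes the summands indexed by $m=l$ and $m=l+1$, preserving the sum. Thus $\alpha_{i,j+1}$ is $s_l$-invariant for every $l\neq i,j$.

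\textbf{Relation \eqref{eq:pdiff2}.} For $j=i+1$ we have $\alpha_{i,i+1}=\sum_{l=i+1}^n x_l^2$, and $T_i$ annihilates $x_l^2$ for $l>i+1$ while $T_i(x_{i+1}^2)=-(x_i+x_{i+1})$. For $j>i+1$, I factor out $(x_{i+1}-x_l)$ from the product and write
\[
\alpha_{i,j}=\sum_{l=j}^n x_l^2\,(x_{i+1}-x_l)\prod_{k=i+2}^{j-1}(x_k-x_l).
\]
Because $l\geq j\geq i+2$, both $x_l^2$ and the residual product $\prod_{k=i+2}^{j-1}(x_k-x_l)$ are $s_i$-invariant, so the twisted Leibniz rule reduces $T_i(\alpha_{i,j})$ to evaluating $T_i(x_{i+1}-x_l)=-1$ on each summand. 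This yields exactly $-\alpha_{i+1,j}$.

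\textbf{Relation \eqref{eq:pdiff3}.} Splitting off the $l=j+1$ summand from
\[
\alpha_{i,j+1}=\sum_{l=j+1}^n x_l^2\,(x_j-x_l)\prod_{k=i+1}^{j-1}(x_k-x_l),
\]
the terms with $l\geq j+2$ have $x_j$ appearing only in the factor $(x_j-x_l)$ (the remaining product is $s_j$-invariant and $x_l^2$ is too), so $T_j$ acts via $T_j(x_j-x_l)=1$ and these terms contribute $\sum_{l=j+2}^n x_l^2\prod_{k=i+1}^{j-1}(x_k-x_l)$. For the $l=j+1$ term a short direct $s_j$-computation gives $x_j^2\prod_{k=i+1}^{j-1}(x_k-x_j)+x_{j+1}^2\prod_{k=i+1}^{j-1}(x_k-x_{j+1})$, which precisely supplies the missing $l=j$ and $l=j+1$ summands of $\alpha_{i,j}$. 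Combining, $T_j(\alpha_{i,j+1})=\alpha_{i,j}$.

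The arguments above are essentially bookkeeping; the only subtlety is making sure one always isolates the unique factor of each summand that fails to be invariant under the relevant simple transposition, so that the twisted Leibniz rule collapses to a single evaluation. Once this is done, the three relations follow without further calculation.
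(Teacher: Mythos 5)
Your proof is correct and follows essentially the same route as the paper: direct verification of each relation from the closed formula, using the twisted Leibniz rule and isolating the unique non-invariant factor, with the $l=j+1$ summand in \eqref{eq:pdiff3} treated separately. The only difference is that where the paper establishes the identity for that boundary summand (its Lemma~\ref{lem:pdiff}) by reverse induction on $i$, you obtain it by a one-line direct computation factoring $(x_j-x_{j+1})$ out of $P-s_j(P)$, which is a mild simplification; you also spell out the checks of \eqref{eq:pdiff2} and \eqref{eq:pdiff32} that the paper dismisses as straightforward.
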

Before proving this proposition we prepare a lemma:
\begin{lem} \label{lem:pdiff}
For $1\leq i \leq j-1 < j\leq n$ we have
\begin{align*}
	T_{j-1}\Bigl(x_j^2\prod_{k={i+1}}^{j-1}(x_k-x_j) \Bigr)=x_j^2\prod\limits_{k=i+1}^{j-2}(x_k-x_j)+x_{j-1}^2\prod\limits_{k=i+1}^{j-2}(x_k-x_{j-1}) .
\end{align*}
\end{lem}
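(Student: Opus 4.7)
The plan is to carry out a direct computation with the Demazure operator, exploiting the fact that the product in the argument already contains the factor $(x_{j-1}-x_j)$, which will cancel nicely with the denominator in $T_{j-1}$.

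First, I would isolate this factor. Writing
\[
\prod_{k=i+1}^{j-1}(x_k-x_j) \;=\; (x_{j-1}-x_j)\prod_{k=i+1}^{j-2}(x_k-x_j),
\]
set $g \coloneq \prod_{k=i+1}^{j-2}(x_k-x_j)$, so the argument of $T_{j-1}$ becomes $f = x_j^2(x_{j-1}-x_j)\,g$. Since $s_{j-1}$ swaps $x_{j-1}$ with $x_j$ and fixes all other variables (note $i+1 \le k \le j-2$ means none of the $x_k$ in $g$ are $x_{j-1}$ or $x_j$), we have $s_{j-1}(f) = x_{j-1}^2(x_j - x_{j-1})\,s_{j-1}(g)$, where $s_{j-1}(g) = \prod_{k=i+1}^{j-2}(x_k-x_{j-1})$.

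Next, I would plug this into the formula $T_{j-1}(f) = \frac{f - s_{j-1}(f)}{x_{j-1}-x_j}$. Using $x_j - x_{j-1} = -(x_{j-1}-x_j)$ we can factor $(x_{j-1}-x_j)$ out of the numerator:
\[
f - s_{j-1}(f) = (x_{j-1}-x_j)\bigl[x_j^2\, g + x_{j-1}^2\, s_{j-1}(g)\bigr].
\]
Dividing by $x_{j-1}-x_j$ and substituting back the explicit form of $g$ and $s_{j-1}(g)$ gives exactly
\[
T_{j-1}(f) = x_j^2\prod_{k=i+1}^{j-2}(x_k-x_j) + x_{j-1}^2\prod_{k=i+1}^{j-2}(x_k-x_{j-1}),
\]
which is the stated identity.

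There is no real obstacle here: the whole computation is driven by the observation that $(x_{j-1}-x_j)$ is already a factor of the product, which turns the Demazure operator into an evaluation-type symmetrizer. The only mild point to be careful about is the edge case $j = i+1$, where the product on the left is the empty product $1$ and the computation reduces to $T_{j-1}(x_j^2) = x_j + x_{j-1}$, matching the right-hand side (both products on the right being empty as well, so the output is $x_j^2 + x_{j-1}^2$ — wait, one should double-check that this is consistent with how the lemma is used in the next proposition, via $T_{j-1}(x_j^2) = x_{j-1} + x_j$; but in any case the algebraic identity above holds verbatim).
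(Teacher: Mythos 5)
Your proof is correct, and it takes a genuinely different and more direct route than the paper. The paper proves the identity by reverse induction on $i$ (base case $i=j-2$), using the twisted Leibniz rule for the Demazure operator at each step. You instead observe that the factor $(x_{j-1}-x_j)$ already sits inside the product, write the argument as $f=x_j^2(x_{j-1}-x_j)g$ with $g=\prod_{k=i+1}^{j-2}(x_k-x_j)$, and note that $s_{j-1}$ fixes every $x_k$ appearing in $g$ except $x_j$, so that $f-s_{j-1}(f)=(x_{j-1}-x_j)\bigl[x_j^2g+x_{j-1}^2s_{j-1}(g)\bigr]$ and the division in $T_{j-1}=\tfrac{1-s_{j-1}}{x_{j-1}-x_j}$ is exact. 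This gives the identity in one line, with no induction and no Leibniz rule; the paper's inductive argument buys nothing extra here, so your version is arguably preferable.

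One remark on your closing hedge about the case $j=i+1$: your worry is justified, because there the product on the left is empty, the factor $(x_{j-1}-x_j)$ is absent, and with the paper's convention $T_{j-1}=\tfrac{1-s_{j-1}}{x_{j-1}-x_j}$ one gets $T_{j-1}(x_j^2)=-(x_{j-1}+x_j)$, which does not equal $x_{j-1}^2+x_j^2$. So the identity as literally stated fails in that degenerate case. However, the paper's own induction also only covers $1\leq i\leq j-2$ (it starts at $i=j-2$ and decreases), and the lemma is only ever invoked in the proof of the subsequent proposition with $i\leq j-2$ (since $\alpha_{i,j-1}$ requires $i<j-1$). So your proof covers exactly the range the paper's does, and the discrepancy is a defect of the lemma's stated hypothesis, not of your argument.
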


\begin{proof}
    We prove the lemma by reverse induction on $i$, making it decrease to 1 starting from $j-2$. 
 As base case we fix $i=j-2$ where
    \[
T_{j-1}\biggl(x_j^2\prod_{k={j-1}}^{j-1}(x_k-x_j) \biggr)=T_{j-1}\bigl(x_j^2(x_{j-1}-x_j)\bigr)=x_{j-1}^2 + x_j^2.    
    \]
    
 We now suppose that the claim is true for $i+1$ and we prove that it is true also for $i$:
    \begin{align*}
      T_{j-1} \Bigl( x_j^2\prod_{k={i+1}}^{j-1}(x_k-x_j) \Bigr)
          &=x_j^2\prod _{k=i+2}^ {j-1}(x_k-x_j) \\
        & \quad \quad + (x_{i+1}-x_{j-1})\Bigl(x_j^2\prod_{k=i+2}^{j-2}(x_k-x_j)+x_{j-1}^2\prod_{k=i+2}^{j-2}(x_k-x_{j-1}) \Bigr)\\
        &=(x_{j-1}-x_j)x_j^2\prod _{k=i+2}^ {j-2}(x_k-x_j) \\
        & \quad \quad + (x_{i+1}-x_{j-1})\Bigr(x_j^2\prod_{k=i+2}^{j-2}(x_k-x_j)+x_{j-1}^2\prod_{k=i+2}^{j-2}(x_k-x_{j-1}) \Bigr)\\
        &=x_{j}^{2}\prod _{k=i+1}^{j-2}(x_k-x_j)+x_{j-1}^2\prod_{k=i+1}^{j-2}(x_k-x_{j-1}).
    \end{align*}
    To obtain the first equality we used the Leibniz rule for a Demazure and the induction hypothesis, while the final result is just a rearrangement of the terms using the associativity and distributivity rules.
\end{proof}

\begin{proof}[Proof of~\autoref{prop:pdiff}]
    Equations~\eqref{eq:pdiff2} and~\eqref{eq:pdiff32} are straightforward. For~\eqref{eq:pdiff3} we want to prove that $T_{j-1}(\alpha_{i,j})=\alpha_{i,j-1}$, for $i+1\leq j \leq n-1$. It is also a computation, except for the second to last equality below, where we use~\autoref{lem:pdiff}:
    \begin{align*}
        T_{j-1}(\alpha_{i,j})&=\sum_{l=j}^nT_{j-1}\Bigl(x_l^2\cdot\prod_{k=i+1}^{j-1}(x_{k}-x_l)\Bigr)\\
        &=T_{j-1}\Bigl( x_j^2\prod_{k=i+1}^{j-1}(x_k-x_j) \Bigr)+\sum_{l=j+1}^nx_l^2\cdot\prod_{k=i+1}^{j-2}(x_{k}-x_l) \\
        &=x_{j-1}^2 \prod _{k=i+1}^{j-2}(x_k-x_{j-1})+x_j^2\prod _{k=i+1}^{j-2}(x_k-x_j)+\sum_{l=j+1}^nx_l^2\cdot\prod_{k=i+1}^{j-2}(x_{k}-x_l)\\
        &=\sum_{l=j-1}^{n}x_l \prod_{k=i+1}^ {j-2}(x_k-x_l)=\alpha_{i,j-1}. \qedhere
    \end{align*}
\end{proof}

Denote by $\es_{i}(j,k)$ the elementary homogeneous symmetric polynomial of degree $i$ in the variables $x_j,\dotsc, x_k$. 
\begin{prop} \label{prop:difelem}
We have
\[
\alpha_{i,j}=\sum_{l=j}^{n}x_l^2\sum_{k=0}^{j-i-1}(-1)^{k}\es_{j-i-1-k}(i+1,j-1)x_l^{k} .
\]
\end{prop}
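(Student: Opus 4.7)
The plan is to reduce the statement to a classical expansion of $\prod(y_k-z)$ in elementary symmetric polynomials and then substitute into \autoref{def:alpha_ij}.

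First I would recall (or record as a one-line lemma) the identity
\[
\prod_{k=1}^{m}(y_k - z) \;=\; \sum_{k=0}^{m}(-1)^{k}\,\es_{m-k}(y_1,\dotsc,y_m)\,z^{k},
\]
which is the standard generating-function relation between the roots $y_1,\dotsc,y_m$ of the polynomial in $z$ on the left and its elementary symmetric functions; it follows at once from expanding the product and grouping terms by the power of $z$.

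Next I would specialise this to $y_k = x_k$ for $k=i+1,\dotsc,j-1$ (so $m=j-i-1$) and $z=x_l$, obtaining
\[
\prod_{k=i+1}^{j-1}(x_k-x_l) \;=\; \sum_{k=0}^{j-i-1}(-1)^{k}\,\es_{j-i-1-k}(i+1,j-1)\,x_l^{k}.
\]

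Finally I would substitute this expansion into the definition
\[
\alpha_{i,j}=\sum_{l=j}^{n}x_l^{2}\prod_{k=i+1}^{j-1}(x_k-x_l)
\]
from \autoref{def:alpha_ij} and pull the factor $x_l^{2}$ inside the resulting sum, which yields exactly the stated formula. There is no real obstacle here: the content is purely the elementary-symmetric expansion of a product of linear factors, and the rest is a direct substitution. One need only check the edge case $j=i+1$, where the product in \autoref{def:alpha_ij} is empty (equal to $1$) and the inner sum reduces to the single term $k=0$ with $\es_{0}(i+1,i)=1$, so both sides give $\sum_{l=j}^{n}x_l^{2}$.
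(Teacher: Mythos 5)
Your proposal is correct and matches the paper's proof: both rest on the classical identity $\prod_{k}(x_k-t)=\sum_{k}(-1)^{k}\es_{n-k}(1,n)t^{k}$ with $t$ replaced by $x_l$, substituted into the defining formula for $\alpha_{i,j}$. Your version is slightly more explicit about the substitution and the empty-product edge case, but the argument is the same.
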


\begin{proof}
	This follows directly from~\autoref{prop:pdiff} together with replacing $t$ by $x_l$ in the well-known identity
\[
	\prod_{k=1}^{n}(x_k-t)=\sum_{k=0}^{n}(-1)^{k}\es_{n-k}(1,n)t^{k} .
\qedhere
\]
\end{proof}

\begin{ex}
Let us consider $R_5$ and compute the action of $\dif_5$ on the $\omega$'s using~\autoref{prop:pdiff}.\\
For $i=1$ we have: 
\begin{align*}
	&\alpha_{1,5}=x_5^2(x_2-x_5)(x_3-x_5)(x_4-x_5),\\
	&\alpha_{1,4}=T_4(\alpha_{1,5})=x_4^2(x_2-x_4)(x_3-x_4)+x_5^2(x_2-x_5)(x_3-x_5),\\
	&\alpha_{1,3}=T_3T_4({\alpha_{1,5}})=x_3^2(x_2-x_3)+x_4^2(x_2-x_4)+x_5^2(x_2-x_5),\\
	&\alpha_{1,2}=T_2T_3T_4(\alpha_{1,5})=x_2^2+x_3^2+x_4^2+x_5^2,\\
\end{align*}
and therefore
\begin{align*}
	\dif_5(\omega_1)=&(x_2^2+x_3^2+x_4^2+x_5^2)\omega_2+\bigl(x_3^2(x_2-x_3)+x_4^2(x_2-x_4)+x_5^2(x_2-x_5)\bigr)\omega_3+\\
	&+\bigl(x_4^2(x_2-x_4)(x_3-x_4)+x_5^2(x_2-x_5)(x_3-x_5)\bigr)\omega_4+\\
	&+x_5^2(x_2-x_5)(x_3-x_5)(x_4-x_5)\omega_5.
\end{align*}
For $i=2$ we have:
\begin{align*}
	&\alpha_{2,5}=-T_1(\alpha_{1,5})=x_5^2(x_3-x_5)(x_4-x_5),\\
	&\alpha_{2,4}=-T_4T_1(\alpha_{1,5})=x_5^2(x_3-x_5)+x_4^2(x_3-x_4),\\
	&\alpha_{2,3}=-T_3T_4T_1(\alpha_{1,5})=x_5^2+x_4^2+x_3^2,\\
\end{align*}
and therefore
\begin{align*}
	\dif_5(\omega_2)=(x_5^2+x_4^2+x_3^2)\omega_3+\bigl(x_5^2(x_3-x_5)+x_4^2(x_3-x_4)\bigr)\omega_4+x_5^2(x_3-x_5)(x_4-x_5)\omega_5.
\end{align*}
For $i=3$ we have:
\begin{align*}
	&\alpha_{3,5}=T_2T_1(\alpha_{1,5})=x_5^2(x_4-x_5),\\
	&\alpha_{3,4}=T_4T_2T_1(\alpha_{1,5})=x_5^2+x_4^2,
\end{align*}
and therefore
\begin{align*}
	\dif_5(\omega_3)=(x_5^2+x_4^2)\omega_4+x_5^2(x_4-x_5)\omega_5.
\end{align*}
For $i=4$ we have:
\begin{align*}
	\alpha_{4,5}=-T_3T_2T_1(\alpha_{1,5})=x_5^2.
\end{align*}
and
\begin{align*}
	\dif_5(\omega_4)=x_5^2\omega_5.
\end{align*}
For $i=5$ we have $\alpha_{5,5}=0$ and $\dif_5(\omega_5)=0$.
\end{ex}

\subsection{Partial derivatives on the extended polynomial ring \texorpdfstring{$R_n$}{}}\label{sec:partder}
Until now we have established the existence of a derivation $\dif_n$ that respects the condition~\eqref{eq:pdiff1}, commutes with the action of the symmetric group, and satisfies~\eqref{eq:dif_n}. In the main result of this section,~\autoref{prop:pnilp}, we prove that $\dif_n$ is $p$-nilpotent when working over a field $\kp$ of characteristic $p>0$. As a step in proving this result we extend to $R_n$ the partial derivatives $\frac{\partial }{\partial x_r}$ on $\Pol_n$.

\begin{defn} \label{defn:partialder}
For each $r\in\{1,\dotsc, n\}$ we define the \emph{partial derivative} of $\omega_i$ with respect to $x_r$ as
  \[
    \dfrac{\partial }{\partial x_r}(\omega_i) =
    \begin{cases}
      0, & \text{ when }i\geq r ,
      \\
      \sum\limits_{j=i+1}^{r}\prod\limits_{k=i+1}^{j-1}(x_k-x_r)\omega_j, & \text{ when } i<r ,
    \end{cases}
  \]
where we use the convention that $\prod_{k=i+1}^{i}(x_k-x_l)=1$.
\end{defn}
This $\Bbbk$-linear operator extends to a derivation on $R_n$ by the Leibniz rule, where $\dfrac{\partial }{\partial x_r}$ acts on polynomials as a partial derivative. Moreover, observe that the action of the partial derivative $\dfrac{\partial}{\partial x_r}$ can be defined on  $R_n$ for $n\geq r$, and it does not depend on $n$.
The following is immediate.
\begin{lem} \label{lem:difaspartial}
    We have
\[
\dif_n(-)=\sum\limits_{r=1}^nx_r^2 \dfrac{\partial }{\partial x_r}(-) \colon R_n \rightarrow R_n.
\]
\end{lem}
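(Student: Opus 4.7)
The plan is to verify equality of two derivations on $R_n$ by checking them on the algebra generators. Since both $\dif_n$ and the operator $D := \sum_{r=1}^n x_r^2 \,\partial/\partial x_r$ are $\Bbbk$-linear derivations on $R_n = \Pol_n \otimes \bV(\und{\omega}_n)$, and $R_n$ is generated (as a $\Bbbk$-algebra) by $x_1,\dots,x_n,\omega_1,\dots,\omega_n$, it suffices to show that $\dif_n$ and $D$ agree on each of these generators.

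On the polynomial generators, $D(x_i) = x_i^2 = \dif_n(x_i)$ is immediate from the definition of the partial derivatives (which act on polynomials in the usual way) and from the first line of~\eqref{eq:dif_n}. So the entire task reduces to proving $\dif_n(\omega_i) = D(\omega_i)$ for each $1 \le i \le n$.

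For the $\omega_i$, I would compute $D(\omega_i)$ directly from~\autoref{defn:partialder}. Since $\partial/\partial x_r(\omega_i) = 0$ whenever $r \le i$, only indices $r > i$ contribute, giving
\[
D(\omega_i) \;=\; \sum_{r=i+1}^{n} x_r^2 \sum_{j=i+1}^{r} \prod_{k=i+1}^{j-1}(x_k - x_r)\,\omega_j .
\]
Swapping the order of summation (the pairs $(r,j)$ with $i+1 \le j \le r \le n$ are the same as those with $i+1 \le j \le n$ and $j \le r \le n$), this becomes
\[
D(\omega_i) \;=\; \sum_{j=i+1}^{n} \Bigl(\sum_{l=j}^{n} x_l^2 \prod_{k=i+1}^{j-1}(x_k - x_l)\Bigr) \omega_j
\;=\; \sum_{j=i+1}^{n} \alpha_{i,j}\,\omega_j,
\]
where in the last step we recognized the expression in parentheses as exactly the polynomial $\alpha_{i,j}$ from~\autoref{def:alpha_ij}. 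By~\eqref{eq:dif_n} the right-hand side is $\dif_n(\omega_i)$, completing the check on generators and thus the proof.

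There is no real obstacle here: the key fact is that the partial derivatives in~\autoref{defn:partialder} were defined precisely so that the polynomial coefficients of $\omega_j$ in $\partial/\partial x_r(\omega_i)$, when multiplied by $x_r^2$ and summed over $r$, reconstruct the coefficients $\alpha_{i,j}$. The only minor bookkeeping is the interchange of the double sum, which is straightforward.
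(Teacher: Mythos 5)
Your proof is correct and is exactly the verification the paper has in mind: the paper simply declares the lemma ``immediate,'' and your check on generators --- using that both sides are derivations, that they agree on the $x_i$, and that swapping the double sum in $\sum_r x_r^2\,\partial\omega_i/\partial x_r$ recovers the coefficients $\alpha_{i,j}$ of \autoref{def:alpha_ij} --- is the computation behind that claim. Nothing is missing.
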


\begin{rem}\label{rem:partder-rec}
It follows at once from~\autoref{defn:partialder} that the partial derivative of $\omega_i$ satisfies the recursive relation 
  \begin{equation} \label{eq:partdequation}
 \dfrac{\partial }{\partial x_r}(\omega_i) = \omega_{i+1}+(x_{i+1}-x_r)\dfrac{\partial }{\partial x_r}(\omega_{i+1}) .
  \end{equation}
It can also be computed as
\[
\dfrac{\partial }{\partial x_r}(\omega_i)
= -s_{r-1}\dotsc s_{i+1}T_i(\omega_i) ,
\]
where $j\leq i \leq r$.
\end{rem}

\begin{prop} \label{prop:doublepart}
	We have that
	$$
		\frac{\partial}{\partial x_r}\frac{\partial}{\partial x_r}\omega_i=0.
	$$
\end{prop}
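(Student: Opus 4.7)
The plan is to leverage the recursive identity from \autoref{rem:partder-rec},
\[
\frac{\partial}{\partial x_r}(\omega_i) = \omega_{i+1} + (x_{i+1}-x_r)\frac{\partial}{\partial x_r}(\omega_{i+1}),
\]
and to proceed by reverse induction on $i$. The case $i \geq r$ is immediate from \autoref{defn:partialder} since $\frac{\partial}{\partial x_r}(\omega_i) = 0$ already. For the base case $i = r-1$, the recursion reduces to $\frac{\partial}{\partial x_r}(\omega_{r-1}) = \omega_r$ (the coefficient of $\frac{\partial}{\partial x_r}(\omega_r)$ vanishes), and applying the operator once more gives $\frac{\partial}{\partial x_r}(\omega_r) = 0$.

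For the inductive step, assume the identity holds for $\omega_{i+1}$ with $i+1 < r$ and apply $\frac{\partial}{\partial x_r}$ to both sides of the recursion. By the Leibniz rule,
\[
\left(\tfrac{\partial}{\partial x_r}\right)^{\!2}\omega_i
= \tfrac{\partial}{\partial x_r}(\omega_{i+1})
+ \tfrac{\partial}{\partial x_r}(x_{i+1}-x_r)\cdot \tfrac{\partial}{\partial x_r}(\omega_{i+1})
+ (x_{i+1}-x_r)\left(\tfrac{\partial}{\partial x_r}\right)^{\!2}\omega_{i+1}.
\]
Since $i+1 < r$, we have $\frac{\partial}{\partial x_r}(x_{i+1}-x_r) = -1$, so the first two terms cancel, and the last term vanishes by the induction hypothesis.

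There is no real obstacle here; the content of the proof is simply that the recursion of \autoref{rem:partder-rec} was designed so that the Leibniz cross term furnishes exactly the sign needed to cancel the first summand $\omega_{i+1}$ after applying $\frac{\partial}{\partial x_r}$ again. An alternative would be to compute directly from \autoref{defn:partialder} by expanding $\left(\frac{\partial}{\partial x_r}\right)^{2}\omega_i$ as a double sum and noticing the cancellations index by index, but the recursive approach avoids any bookkeeping with the products $\prod_{k=i+1}^{j-1}(x_k-x_r)$.
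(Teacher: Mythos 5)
Your proof is correct and follows essentially the same route as the paper: both arguments apply $\frac{\partial}{\partial x_r}$ to the recursion of \autoref{rem:partder-rec}, observe that the Leibniz cross term (coming from $\frac{\partial}{\partial x_r}(x_{i+1}-x_r)=-1$) cancels the summand $\frac{\partial}{\partial x_r}\omega_{i+1}$, and reduce to the trivial case $i\geq r-1$. The only difference is presentational — the paper iterates the resulting identity $\bigl(\frac{\partial}{\partial x_r}\bigr)^{2}\omega_i=(x_{i+1}-x_r)\bigl(\frac{\partial}{\partial x_r}\bigr)^{2}\omega_{i+1}$ down to $\omega_{r-1}$ in one display, whereas you package the same computation as a reverse induction.
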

\begin{proof}
	The statement is trivially true if $i\geq {r-1}$, while for $i<r-1$ it is enough to iterate~\eqref{eq:partdequation}: 
	\begin{align*}
		\frac{\partial}{\partial x_r}\frac{\partial}{\partial x_r}\omega_i&=\frac{\partial}{\partial x_r}\biggl(\omega_{i+1}+(x_{i+1}-x_r)\frac{\partial}{\partial x_r}\omega_{i+1}\biggr)=(x_{i+1}-x_r)\frac{\partial}{\partial x_r}\frac{\partial}{\partial x_r}\omega_{i+1}\\
		&=(x_{i+1}-x_r)(x_{i+2}-x_r)\cdots(x_{r-1}-x_r)\frac{\partial}{\partial x_r}\frac{\partial}{\partial x_r}\omega_{r-1} = 0. 
\qedhere
	\end{align*}
\end{proof}

The following justifies the terminology.  
\begin{prop}\label{prop:partialdcommute}
  Partial derivatives commute, i.e. 
    \[
    \dfrac{\partial }{\partial x_r}\dfrac{\partial }{\partial x_s} =
    \dfrac{\partial }{\partial x_s}\dfrac{\partial }{\partial x_r} .
  \]
\end{prop}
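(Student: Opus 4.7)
The plan is to exploit that $[\partial_r,\partial_s]$ is itself a (graded) derivation on $R_n$, being the commutator of two derivations, so it suffices to verify its vanishing on a generating set of $R_n$. On the polynomial generators $x_1,\dotsc,x_n$ this is the classical commutativity of partial derivatives on $\Pol_n$, so the whole task reduces to proving $\partial_r\partial_s\omega_i = \partial_s\partial_r\omega_i$ for every $i$, $r$, $s$.

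Without loss of generality assume $r\le s$. The case $r=s$ follows from~\autoref{prop:doublepart}, so from now on $r<s$. If $i\ge s$, both iterated partials vanish trivially. If $r\le i<s$, then $\partial_r\omega_i=0$ directly, and, in the defining sum for $\partial_s\omega_i$ from~\autoref{defn:partialder}, every summand involves an $\omega_j$ with $j>i\ge r$ together with polynomial factors $x_k-x_s$ with $k>i\ge r$; applying $\partial_r$ termwise kills each summand and yields $\partial_r\partial_s\omega_i=0$ as well.

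The substantive case is $i<r$, which I would handle by reverse induction on $i$ starting at $i=r-1$. For the base case, $\partial_s\partial_r\omega_{r-1}=\partial_s\omega_r$, while expanding $\partial_s\omega_{r-1}$ from~\autoref{defn:partialder} and applying $\partial_r$ termwise (using $\partial_r\omega_l=0$ for $l>r$, and that only the $k=r$ factor in each product carries $x_r$) collapses to the same expression. For $i<r-1$, I would invoke the recursion~\eqref{eq:partdequation} for both $\partial_r$ and $\partial_s$, use Leibniz (noting $\partial_r(x_{i+1}-x_s)=\partial_s(x_{i+1}-x_r)=0$ since $r,s\neq i+1$ and $r\neq s$), and apply the inductive hypothesis $[\partial_r,\partial_s]\omega_{i+1}=0$. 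After the dust settles, what remains to establish is the auxiliary identity
\[
\partial_r\omega_{i+1}-\partial_s\omega_{i+1} \,=\, (x_s-x_r)\,\partial_s\partial_r\omega_{i+1}.
\]

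The main obstacle is proving this auxiliary identity, which I would do by comparing coefficients of each $\omega_l$ on both sides using the explicit formulas of~\autoref{defn:partialder}. The key ingredient is the telescoping identity
\[
\prod_{k=j+1}^{M}(x_k-x_r) - \prod_{k=j+1}^{M}(x_k-x_s) \,=\, (x_s-x_r)\sum_{m=j+1}^{M}\prod_{k=j+1}^{m-1}(x_k-x_r)\prod_{k=m+1}^{M}(x_k-x_s) .
\]
For coefficients of $\omega_l$ with $l\le r$ this matches directly with $M=l-1$. For $r<l\le s$, the crucial observation is that $\prod_{k=j+1}^{r}(x_k-x_r)$ vanishes at $k=r$, so applying the telescoping identity with $M=r$ and factoring out the tail $\prod_{k=r+1}^{l-1}(x_k-x_s)$ leaves precisely $-\prod_{k=j+1}^{l-1}(x_k-x_s)$, matching the coefficient on the left. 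The delicate point is just the bookkeeping of which $\omega_l$ survives in each sub-range, given that $\partial_r\omega_j$ stops at $\omega_r$ while $\partial_s\omega_j$ stops at $\omega_s$.
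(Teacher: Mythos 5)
Your proof is correct, and its skeleton --- reduce to the generators $\omega_i$ via the derivation property of the commutator, dispose of the degenerate cases, then run a reverse induction on $i$ through the recursion \eqref{eq:partdequation} until everything hinges on an identity of the form $\partial_r\omega_{i+1}-\partial_s\omega_{i+1}=(x_s-x_r)\partial_s\partial_r\omega_{i+1}$ --- matches the paper's. The one genuine divergence is how that auxiliary identity is obtained. The paper never proves it directly: in its induction step it assumes commutativity at levels $i+1$ \emph{and} $i+2$, equates the two expansions of $\partial_r\partial_s\omega_{i+1}=\partial_s\partial_r\omega_{i+1}$ coming from \eqref{eq:partdequation}, and extracts the identity (this is \eqref{eq:partder-usefuleq}) as a formal consequence; the price is a recursion reaching back two levels, hence two explicit base cases ($r=i+1$ and $r=i+2$). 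You instead prove the identity outright by matching the coefficient of each $\omega_l$ via the telescoping factorization of $\prod_k(x_k-x_r)-\prod_k(x_k-x_s)$, with the pivot observation that $\prod_{k=j+1}^{r}(x_k-x_r)$ vanishes handling the range $r<l\le s$; I checked that this coefficient comparison does close up. Your route costs more computation but yields the auxiliary identity as a standalone fact about the partial derivatives, and it lets the induction reach back only one level, so the single base case $i=r-1$ suffices. Both arguments are complete; the paper's is shorter, yours is more self-contained and makes the mechanism behind \eqref{eq:partder-usefuleq} explicit rather than leaving it as an artifact of the induction hypothesis.
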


\begin{proof}
It is enough to show that 
  \begin{equation}\label{eq:partdcomm}
    \dfrac{\partial }{\partial x_r}\dfrac{\partial }{\partial x_s}(\omega_i) =
    \dfrac{\partial }{\partial x_s}\dfrac{\partial }{\partial x_r}(\omega_i),
  \end{equation}
since the claim is obviously true for the generators $x_i$'s.
The statement is clear for $i\geq s$ or $i\geq r$, for $s=r$ and for $i=n-1,n$ since in each of these cases both sides of~\eqref{eq:partdcomm} are zero. Suppose now w.l.o.g. that $i$ is fixed and

\smallskip
\noindent$\bullet$~$r=i+1$ and $s>r$, we compute
\[
    \dfrac{\partial}{\partial x_{i+1}}\dfrac{\partial}{\partial x_s}\omega_i=\dfrac{\partial}{\partial x_{i+1}}\biggl(\omega_{i+1}+(x_{i+1}-x_s)\dfrac{\partial}{\partial x_s}\omega_{i+1} \biggr)=\dfrac{\partial}{\partial x_s}\omega_{i+1}=\dfrac{\partial}{\partial x_s}\dfrac{\partial}{\partial x_{i+1}}\omega_i.
\]

\smallskip
\noindent$\bullet$~$r=i+2$ and $s>r$, we compute
    \begin{align*}
        \dfrac{\partial}{\partial x_{i+2}}\dfrac{\partial}{\partial x_{s}}\omega_i&=\dfrac{\partial}{\partial x_{i+2}}\biggl(\omega_{i+1}+(x_{i+1}-x_{i+2})\omega_{i+2}+(x_{i+1}-x_s)(x_{i+2}-x_s)\dfrac{\partial}{\partial x_s}\omega_{i+2} \biggr) \\
        &=\omega_{i+2}+(x_{i+1}-x_s)\dfrac{\partial}{\partial x_s}\omega_{i+2}, \\
        \dfrac{\partial}{\partial x_s}\dfrac{\partial}{\partial x_{i+2}}\omega_i&=\dfrac{\partial}{\partial x_s}\bigl(\omega_{i+1}+(x_{i+1}-x_{i+2})\omega_{i+2} \bigr) \\
        &=\omega_{i+2} +(x_{i+2}-x_s)\dfrac{\partial}{\partial x_s}\omega_{i+2} +(x_{i+1}-x_{i+2})\dfrac{\partial}{\partial x_s}\omega_{i+2} \\
        &=\omega_{i+2}+(x_{i+1}-x_s)\dfrac{\partial}{\partial x_s}\omega_{i+2}.
    \end{align*}

\smallskip
\noindent$\bullet$~$r\neq s > i+2$, we proceed by recurrence.  We  first note that since
\begin{align*}
  \dfrac{\partial }{\partial x_{r}}\dfrac{\partial }{\partial x_{s}}\omega_{i+1} &=
  \dfrac{\partial }{\partial x_{r}}\omega_{i+2} + (x_{i+2}-x_s)\dfrac{\partial }{\partial x_{r}}\dfrac{\partial }{\partial x_{s}}\omega_{i+2},
\\
  \dfrac{\partial }{\partial x_{s}}\dfrac{\partial }{\partial x_{r}}\omega_{i+1} &=
  \dfrac{\partial }{\partial x_{s}}\omega_{i+2} + (x_{i+2}-x_r)\dfrac{\partial }{\partial x_{s}}\dfrac{\partial }{\partial x_{r}}\omega_{i+2},
\end{align*}
the recurrence assumption implies that 
\begin{equation} \label{eq:partder-usefuleq}
 \dfrac{\partial }{\partial x_{s}}\omega_{i+2} - \dfrac{\partial }{\partial x_{r}}\omega_{i+2}  = 
(x_r-x_s)\dfrac{\partial }{\partial x_{r}}\dfrac{\partial }{\partial x_{s}}\omega_{i+2} .
\end{equation}
Expanding (we use the recurrence relation once)
\begin{align*}\allowdisplaybreaks
  \dfrac{\partial }{\partial x_{r}}\dfrac{\partial }{\partial x_{s}}\omega_{i} =&
  \omega_{i+2} + (x_{i+2}-x_r)  \dfrac{\partial }{\partial x_{r}}\omega_{i+2}  + (x_{i+1}-x_s)\dfrac{\partial }{\partial x_{s}}\omega_{i+2}
\\ &+(x_{i+1}-x_s)(x_{i+2}-x_r)\dfrac{\partial }{\partial x_{r}}\dfrac{\partial }{\partial x_{s}}\omega_{i+2}
\intertext{and}
  \dfrac{\partial }{\partial x_{s}}\dfrac{\partial }{\partial x_{r}}\omega_{i} =&
  \omega_{i+2} + (x_{i+2}-x_s)  \dfrac{\partial }{\partial x_{s}}\omega_{i+2}  + (x_{i+1}-x_r)\dfrac{\partial }{\partial x_{r}}\omega_{i+2}
\\ &+(x_{i+1}-x_r)(x_{i+2}-x_s)\dfrac{\partial }{\partial x_{r}}\dfrac{\partial }{\partial x_{s}}\omega_{i+2}
\end{align*}
gives
\[
 \dfrac{\partial }{\partial x_{r}}\dfrac{\partial }{\partial x_{s}}\omega_{i}- \dfrac{\partial }{\partial x_{s}}\dfrac{\partial }{\partial x_{r}}\omega_{i}
  =(x_{i+2}-x_{i+1})
  \biggl(  \dfrac{\partial }{\partial x_{r}}\omega_{i+2} - \dfrac{\partial }{\partial x_{r}}\omega_{i+2} +
  (x_r-x_s)  \dfrac{\partial }{\partial x_{r}}\dfrac{\partial }{\partial x_{s}}\omega_{i}\biggr) , 
\]
which is zero by~\eqref{eq:partder-usefuleq}. 
\end{proof}

Now we aim to demonstrate the main result of this section: specializing the characteristic of the ground field results in $p$-nilpotency of $\dif_n$.

\begin{lem} \label{lem:powerx2patial}
    For $n\geq1$ we have the equality
    \[
        \biggl(x^2 \dfrac{\partial}{\partial x} \biggr)^n = \sum_{i=1}^{n}a_{i,n}x^{n+i}\dfrac{\partial^i}{\partial x^i},
    \]
    where
    \begin{align*}
        &a_{1,n} = n!, \\
        &a_{n,n} = 1, \\
        &a_{j,n} = a_{j-1,n-1} + (n-1+j)a_{j,n-1}, \quad \text{when } 2 \leq j \leq n-1.
    \end{align*}
\end{lem}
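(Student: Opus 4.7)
The plan is a straightforward induction on $n$, using the Leibniz rule to commute $x^2\frac{\partial}{\partial x}$ past $x^{n-1+i}\frac{\partial^i}{\partial x^i}$.

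The base case $n=1$ is immediate since $a_{1,1}=1=1!$. For the inductive step, I would assume the formula holds at level $n-1$ and compute
\[
\left(x^2 \dfrac{\partial}{\partial x}\right)^n = x^2 \dfrac{\partial}{\partial x} \sum_{i=1}^{n-1} a_{i,n-1}\, x^{n-1+i}\, \dfrac{\partial^i}{\partial x^i}.
\]
The key calculation is the single Leibniz application
\[
x^2\dfrac{\partial}{\partial x}\!\left(x^{n-1+i}\dfrac{\partial^i}{\partial x^i}\right)
=(n-1+i)\, x^{n+i}\, \dfrac{\partial^i}{\partial x^i} + x^{n+i+1}\, \dfrac{\partial^{i+1}}{\partial x^{i+1}}.
\]
Substituting this and reindexing the second sum by $j = i+1$ yields
\[
\left(x^2\dfrac{\partial}{\partial x}\right)^n
=\sum_{i=1}^{n-1}(n-1+i)\, a_{i,n-1}\, x^{n+i}\, \dfrac{\partial^i}{\partial x^i}
+\sum_{j=2}^{n} a_{j-1,n-1}\, x^{n+j}\, \dfrac{\partial^j}{\partial x^j}.
\]

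The last step is to read off the coefficient of $x^{n+j}\frac{\partial^j}{\partial x^j}$ in the combined sum and match it against the claimed $a_{j,n}$. For $j=1$ only the first sum contributes and gives $n\cdot a_{1,n-1} = n\cdot(n-1)! = n!$, matching $a_{1,n}$. For $j=n$ only the second sum contributes and gives $a_{n-1,n-1}=1 = a_{n,n}$. For the intermediate range $2\leq j\leq n-1$ both sums contribute, giving exactly $a_{j-1,n-1} + (n-1+j)\, a_{j,n-1}$, which is the prescribed recursion.

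There is no substantive obstacle; the only thing to be careful about is bookkeeping at the boundary indices $j=1$ and $j=n$, making sure the reindexing of the second sum does not accidentally overlap or miss a term. Once that is checked, the three cases above cover all coefficients and close the induction.
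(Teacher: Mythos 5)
Your proof is correct and follows exactly the route the paper takes: induction on $n$, peeling off one factor of $x^2\frac{\partial}{\partial x}$, applying the inductive hypothesis, and performing the single Leibniz computation. The paper leaves the reindexing and boundary checks as "immediate"; you have simply written them out, and they check out.
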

\begin{proof}
    The result follows immediately by induction on the exponential $n$ after performing the computation
    \begin{equation*}
        \biggl(x^2\dfrac{\partial}{\partial x}\biggr)^n = x^2\dfrac{\partial}{\partial x}\biggl(x^2\dfrac{\partial}{\partial x}\biggr)^{n-1} = x^2\dfrac{\partial}{\partial x} \sum_{i=1}^{n-1}a_{i,n-1}x^{n-1+i}\dfrac{\partial^i}{\partial x^i}.
   \qedhere
    \end{equation*}
\end{proof}

We now specialize $\Bbbk$ to characteristic $p$.
\begin{prop} \label{prop:pnilp}
	If $R_n$ is a $\kp$-algebra then the derivation $\dif_n$ is $p$-nilpotent.
\end{prop}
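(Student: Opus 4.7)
The plan is to use Lemma 2.20 to write $\dif_n = \sum_{r=1}^n D_r$, where $D_r := x_r^2 \frac{\partial}{\partial x_r}$, and then establish two facts: (i) the operators $D_1,\dotsc,D_n$ are pairwise commuting (even) derivations of $R_n$, and (ii) each satisfies $D_r^p = 0$ in characteristic $p$. Once these are in hand, the freshman's dream for commuting operators in characteristic $p$ gives $\dif_n^p = \sum_r D_r^p = 0$.

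For (i), each $D_r$ is an even derivation because $x_r^2$ is central in the super-commutative ring $R_n$ (it is the square of an even generator) and $\frac{\partial}{\partial x_r}$ is a derivation by construction. The commutator $[D_r,D_s]$ is therefore itself a derivation, so to show it vanishes it suffices to check the generators $x_t$ and $\omega_i$. On each $x_t$ this is immediate from $\frac{\partial}{\partial x_r}(x_s^2)=0$ for $r\neq s$. On $\omega_i$ the same vanishing collapses $D_rD_s(\omega_i)$ to $x_r^2 x_s^2\,\frac{\partial}{\partial x_r}\frac{\partial}{\partial x_s}(\omega_i)$, which is symmetric in $r,s$ by Proposition 2.23.

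For (ii), I would invoke the standard fact that in characteristic $p$ the $p$-th power of an (even) derivation is again a derivation, since the iterated Leibniz rule collapses to $D^p(ab)=D^p(a)b + aD^p(b)$ because $p\mid\binom{p}{k}$ for $0<k<p$. So it again suffices to verify vanishing on generators. On $x_s$ for $s\neq r$ this is trivial; on $x_r$ itself, Lemma 2.24 gives $D_r^p(x_r) = a_{1,p}\,x_r^{p+1} = p!\,x_r^{p+1} = 0$ in characteristic $p$. On $\omega_j$, Lemma 2.24 yields
\[
D_r^p(\omega_j) = \sum_{i=1}^p a_{i,p}\,x_r^{p+i}\,\frac{\partial^i \omega_j}{\partial x_r^i},
\]
and Proposition 2.22 kills every term with $i\geq 2$, leaving only the $i=1$ contribution $p!\,x_r^{p+1}\,\frac{\partial \omega_j}{\partial x_r}=0$.

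I do not expect a serious obstacle: the argument is essentially an assembly of Lemma 2.20, Lemma 2.24, Proposition 2.22 and Proposition 2.23. The most delicate bookkeeping is the parity check that legitimizes calling $D_r$ (and hence $D_r^p$) a derivation in the ungraded sense, which is fine since $x_r^2$ is even and $\frac{\partial}{\partial x_r}$ preserves parity (it sends $\omega_i$ to an $R_n^{\mathrm{even}}$-combination of odd $\omega_j$'s and sends $x_i$ to a scalar).
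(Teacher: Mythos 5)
Your proof is correct and follows essentially the same route as the paper: decompose $\dif_n=\sum_r x_r^2\frac{\partial}{\partial x_r}$, collapse the $p$-th power via the vanishing of multinomial coefficients in characteristic $p$, and kill each $\bigl(x_r^2\frac{\partial}{\partial x_r}\bigr)^p$ using the expansion lemma together with $\frac{\partial^2}{\partial x_r^2}\omega_i=0$. You are merely slightly more explicit than the paper in justifying that the summands commute and that checking $p$-th powers on generators suffices because a $p$-th power of a derivation is again a derivation.
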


\begin{proof}
We already know that $\dif_n^p(f)=0$ when $f\in \Pol_n\subset R_n$, then it is sufficient to show that $\dif_n^p(\omega_i)=0$ for all $i=1,\dotsc n$.
  
 Since $\dif_n=\sum\limits_{r=1}^nx_r^2\dfrac{\partial}{\partial x_r}$ (\autoref{lem:difaspartial}) then 
	\begin{align} \label{cor:d_p_nil1}
		\dif_n^p&=\biggl(\sum\limits_{r=1}^nx_r^2 \dfrac{\partial }{\partial x_r}\biggr)^p=\sum\limits_{\substack{k_1+\cdots +k_n=p \\ k_1,\ldots, k_n \geq 0}}\binom{p}{k_1,k_2,\dotsc , k_n}\prod\limits_{r=1}^{n}\biggl(x_r^2\dfrac{\partial}{\partial x_r}\biggr)^{k_r} 
  =\sum\limits_{r=1}^n\biggl(x_r^2\dfrac{\partial}{\partial x_r}\biggr)^{p}.
	\end{align}
	The last equality follows from the fact that we are working in characteristic $p$ and the multinomial coefficient
        \[
          \binom{p}{k_1,k_2,\dotsc , k_n}=\frac{p!}{k_1!\cdot k_2! \cdots k_n!}
	\]
    is equal to $1$ whenever one of $k_1,\dotsc,k_n$ is $p$, while in all the other cases it is equal to $0 \pmod p$.
    Now, observe that in characteristic $p$ the equality in \autoref{lem:powerx2patial} can be restated as
\begin{align} \label{cor:d_p_nil2}
		\biggl(x^2_r\dfrac{\partial}{\partial x_r}\biggr)^{p}=\sum_{i=2}^{p}a_{i,p}x_r^{p+i}\dfrac{\partial^i}{\partial x_r^i},
\end{align}
    since $a_{1,p}$ is $p! \equiv 0 \pmod p$.
     As a consequence of equations~\eqref{cor:d_p_nil1},~\eqref{cor:d_p_nil2}, and~\autoref{prop:doublepart}, we have that for all $i=1,\dotsc, n$
    \begin{align*}
        \dif_n^p(\omega_i)=\sum_{r=1}^{n}\biggl( \sum_{i=2}^pa_{i,p}x_r^{p+i} \dfrac{\partial^i}{\partial x_r^i} \biggr)\omega_i = 0,
    \end{align*} 
    since all the powers of the partial derivatives are greater than or equal to $2$.
\end{proof}

\subsection{Equivariant homomorphisms}\label{sec:pdgA}

We say that a map between algebras with derivations is \emph{$\dif$-equivariant} if it commutes with the derivations.

\begin{thm} \label{thm:pdgphi}
The map $\phi'_{n}\colon R_{n}\to R_{n+1}$, which is defined as
	\begin{align*}
	\phi_{n}'(\omega_i) &=\omega_i-\sum\limits_{l=i+1}^{n+1}x_{n+1}\prod\limits_{j=i+1}^{l-1}(x_j-x_{n+1})\omega_l, \\
        \phi'_{n}(x_i) &=x_i, \\ 
        \phi'_{n}(yz) &= \phi'_{n}(y)\phi'_{n}(z),
        \end{align*} 
where $i\in \{1,2,\dotsc , n\}$, $y,z\in R_{n}$, and with the convention that $\prod_{j=i+1}^{i}(x_j-x_{n+1})=1$,
is a grading preserving, $\dif$-equivariant homomorphism.
\end{thm}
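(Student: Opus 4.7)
The plan is to first unpack the formula for $\phi'_n$ using the partial derivatives of Section~\ref{sec:partder}. Observing that
\[
\sum_{l=i+1}^{n+1} \prod_{j=i+1}^{l-1}(x_j - x_{n+1})\,\omega_l \;=\; \frac{\partial}{\partial x_{n+1}}(\omega_i)
\]
by~\autoref{defn:partialder} (applied inside $R_{n+1}$), the definition of $\phi'_n$ on generators reads
\[
\phi'_n(\omega_i) = \omega_i - x_{n+1}\frac{\partial}{\partial x_{n+1}}(\omega_i),\qquad \phi'_n(x_i) = x_i.
\]
From this form, grading preservation is immediate: $\tfrac{\partial}{\partial x_{n+1}}$ lowers the $q$-degree by $2$ and preserves the $\lambda$-degree, while multiplication by $x_{n+1}$ raises the $q$-degree by $2$.

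Next, I would check that $\phi'_n$ is a well-defined algebra homomorphism. Since it is defined multiplicatively on the generators, it suffices to verify that the images of the $x_i$ and the $\omega_i$ satisfy the defining relations of $R_n$ in $R_{n+1}$. The commutativity of the $x_i$'s and the commutativity between $x_i$'s and $\omega_j$'s are immediate from $\phi'_n(x_i) = x_i$ and the fact that $\phi'_n(\omega_i)$ is a $\Pol_{n+1}$-linear combination of the generators $\omega_l$. This same observation handles the super relations: $\phi'_n(\omega_i)$ lives in the odd component of the super algebra $R_{n+1}$, hence anticommutes with every $\phi'_n(\omega_j)$ and squares to zero.

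The core of the proof is $\dif$-equivariance. On the generators $x_i$ it is trivial: $\phi'_n\dif_n(x_i) = x_i^2 = \dif_{n+1}\phi'_n(x_i)$. For the $\omega_i$, I would express everything in terms of partial derivatives via~\autoref{lem:difaspartial}, writing $\dif_n = \sum_{r=1}^n x_r^2\, \partial_r$ and $\dif_{n+1} = \sum_{r=1}^{n+1} x_r^2\, \partial_r$. Two ingredients then drive the computation: commutativity of partials (\autoref{prop:partialdcommute}) and the vanishing $\partial_{n+1}^2\omega_i = 0$ (\autoref{prop:doublepart}). Since $\partial_r\omega_i$ for $r\leq n$ is a $\Pol_n$-linear combination of $\omega_j$'s with $j\leq n$, applying $\phi'_n$ termwise yields
\[
\phi'_n\dif_n(\omega_i) = \sum_{r=1}^n x_r^2\bigl(\partial_r\omega_i - x_{n+1}\,\partial_r\partial_{n+1}\omega_i\bigr).
\]
On the other hand, expanding $\dif_{n+1}(\omega_i - x_{n+1}\partial_{n+1}\omega_i)$ by the Leibniz rule, the term $x_{n+1}^2\partial_{n+1}\omega_i$ arising as the $r=n+1$ summand of $\dif_{n+1}(\omega_i)$ cancels against $\dif_{n+1}(x_{n+1})\,\partial_{n+1}\omega_i = x_{n+1}^2\partial_{n+1}\omega_i$, while the $r=n+1$ contribution to $x_{n+1}\dif_{n+1}(\partial_{n+1}\omega_i)$ dies by $\partial_{n+1}^2\omega_i = 0$. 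After these cancellations the remaining terms match the left-hand side on the nose.

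The main conceptual point, rather than a genuine obstacle, is recognizing that the seemingly ad hoc formula defining $\phi'_n$ is \emph{designed} to cancel the discrepancy between $\dif_n$ and $\dif_{n+1}$ acting on $\omega_i$, which is precisely $x_{n+1}^2\partial_{n+1}\omega_i$. Once the partial-derivative reformulation is in hand, the verification reduces to a short and transparent computation; no further combinatorial identities about the $\alpha_{i,j}$ are needed.
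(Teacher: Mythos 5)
Your proof is correct and follows essentially the same route as the paper's: both rewrite $\phi'_n(\omega_i)$ as $\bigl(1-x_{n+1}\frac{\partial}{\partial x_{n+1}}\bigr)\omega_i$ and then derive $\dif$-equivariance from the commutativity of partial derivatives (\autoref{prop:partialdcommute}) together with the vanishing $\frac{\partial}{\partial x_{n+1}}\frac{\partial}{\partial x_{n+1}}\omega_i=0$ (\autoref{prop:doublepart}). The only cosmetic difference is that the paper first shows $\phi'_n$ intertwines each $\frac{\partial}{\partial x_k}$ separately (and kills $\frac{\partial}{\partial x_{n+1}}$) before summing against $x_k^2$, whereas you expand both sides of the equivariance identity directly and cancel; the ingredients and the cancellations are identical.
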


\begin{proof}
    The map  $\phi'_n$ is grading preserving and it is easily proven that it respects the algebra relations that define $R_n$, thus it is a homomorphism. We have left to prove that it commutes with the derivations of $R_{n}$ and $R_{n+1}$. Since $\phi'_{n}$ commutes with the derivation when restricted to the polynomial sub-algebras of $R_{n}$ and $R_{n+1}$, we just need to prove that $\phi'_{n}(\dif_{n+1}\omega_i)=\dif_n\phi'_{n}(\omega_i)$ on the generators $\omega_i$ for $1\leq i \leq n$. This result is consequence of the equality $\dif_{n+1}=\sum_{j=1}^{n+1}x_j^2\frac{\partial}{\partial x_j}$, and the fact that
	$$\frac{\partial}{\partial x_k}\phi'_{n}(\omega_i)=
	\begin{cases}
		\phi'_{n}(\frac{\partial}{\partial x_k}\omega_i) &\text{if } 1\leq k \leq n ,\\
		0 &\text{if } k=n+1.
	\end{cases}
	$$
	which we will now prove. \\
	 Let $\iota\colon R_{n}\to R_{n+1}$ be the canonical inclusion (it sends $x_i$ to $x_i$ and $\omega_i$ to $\omega_i$ for $i=1,\dotsc ,n$). It commutes with the partial derivatives. Indeed, recall that the action of $\frac{\partial}{\partial x_r}$, as described in~\autoref{defn:partialder}, can be defined on $R_n$ for all $n\geq r$ and it does not depend on $n$. (Observe that $\iota$ does not commute with the derivations $\dif_{n}$, $\dif_{n+1}$).
	The image of $\phi'_{n}$ on the elements of $R_{n}$ of the form $f=\sum_{j=1}^{n}p_j\omega_j$, where $p_j \in \Pol_{n}$, is equal to
	$$
		\phi'_{n}(f)=\Bigl(1-x_{n+1}\frac{\partial}{\partial x_{n+1}}\Bigr)\iota(f).
	$$
	This follows directly from~\autoref{defn:partialder} and the fact that $(1-x_{n+1}\frac{\partial}{\partial x_{n+1}})\iota(-)$ is $\Pol_{n}$-linear. (In general $\phi'_{n}(\omega_i\omega_j)$ is not equal to $(1-x_{n+1}\frac{\partial}{\partial x_{n+1}})\iota(\omega_i\omega_j)$). For $k=1,\dotsc, n$ we compute  
	\begin{align*}
		\frac{\partial}{\partial x_k}\phi'_{n}(\omega_i)&=\frac{\partial}{\partial x_k}\Bigl(1-x_{n+1}\frac{\partial}{\partial x_{n+1}}\Bigr)\iota(\omega_i)=\Bigl(\frac{\partial}{\partial x_k}-x_{n+1}\frac{\partial}{\partial x_k}\frac{\partial}{\partial x_{n+1}}\Bigr)\iota(\omega_i)\\
		&=\Bigl(\frac{\partial}{\partial x_k}-x_{n+1}\frac{\partial}{\partial x_{n+1}}\frac{\partial}{\partial x_k}\Bigr)\iota(\omega_i)=\Bigr(1-x_{n+1}\frac{\partial}{\partial x_{n+1}}\Bigr)\iota\Bigl(\frac{\partial}{\partial x_k}\omega_i\Bigr)\\
		&=\phi'_{n}\Bigl(\frac{\partial}{\partial x_k}\omega_i\Bigr).
	\end{align*}
When $k=n+1$ using~\autoref{prop:doublepart} we get that
\begin{align*}
	\frac{\partial}{\partial x_{n+1}}\phi'_{n}(\omega_i)&=\frac{\partial}{\partial x_{n+1}}\Bigl(1-x_{n+1}\frac{\partial}{\partial x_{n+1}}\Bigr)\iota(\omega_i)=-x_{n+1}\frac{\partial}{\partial x_{n+1}}\frac{\partial}{\partial x_{n+1}}\iota(\omega_1)=0.
\end{align*}
We conclude that, for $1\leq i \leq n$:
	\begin{align*}
		\phi'_{n}(\dif_{n+1}\omega_i)&=\phi'_{n}(\sum_{j=1}^{n+1}x_j^2\frac{\partial}{\partial x_j}\omega_i)=\sum_{j=1}^{n}x_j^2\frac{\partial}{\partial x_j}\phi'_{n}(\omega_i)=
		\dif_{n}\phi'_{n}(\omega_i).
	\end{align*}
Since $\phi'_{n}$ commutes with the derivations $\dif_{n-1}$ and $\dif_n$ when acting on the generators $x_1,\dotsc, x_{n-1}$ and $\omega_1,\dotsc, \omega_{n-1}$ of $R_{n-1}$, we conclude that $\Phi_{n-1,n}$ is a $\dif$-equivariant morphism of algebras.
\end{proof}

\begin{defn}
	For $m$, $n$ positive integers such that $m\leq n$ we define the $\dif$-equivariant homomorphism $\phi'_{m,n}:R_m\rightarrow R_n$ as
	\begin{align*}
	    \phi'_{m,n}(f)&:=\phi'_{n-1}\phi'_{n-2}\cdots \phi'_{m}(f) & \text{if } m<n, \\
        \phi'_{n,n}(f)&:=f  & \text{if } m=n,
	\end{align*}
	for $f\in R_m$.
\end{defn}
Note that $\phi'_{m,n}$ preserves all gradings. 
Observe that, since $\dif_{m}(\omega_m)=0$, then the image of $\omega_m$ under the morphism  $\phi'_{m,n}$ is an element of $\ker(\dif_n)$. This allows us to define a new $\dif$-equivariant morphism between the algebras $\bigl(\bV( \omega_1,\dotsc ,\omega_n), 0 \bigr)$ and $(R_n,\dif_n)$ by sending each $\omega_i$ to the element $\phi'_{i,n}(\omega_i)$ of $R_n$.

\begin{prop} \label{prop:morphOmega}
Let's consider the graded algebra $\bV ( \omega_1,\dotsc ,\omega_n)$ endowed with the trivial derivation. There exists a $\dif$-equivariant, grading preserving homomorphism
    \begin{align}
        \Omega_n\colon \bV ( \omega_1,\dotsc ,\omega_n) &\longrightarrow R_n\\
        \omega_i &\mapsto \Omega_n(\omega_i)\coloneq \phi'_{i,n}(\omega_i). \nonumber
    \end{align}
\end{prop}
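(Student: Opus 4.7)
The plan is to break the proof into three claims matching the three properties in the statement: (i) $\Omega_n$ is a well-defined algebra homomorphism, (ii) it preserves the $\bZ^2$-grading, and (iii) it is $\dif$-equivariant. Properties (ii) and (iii) will follow almost immediately from what has been established about $\phi'_n$ in~\autoref{thm:pdgphi}; the substance of the argument lies in (i).

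For (i), since $\bV(\omega_1,\dotsc,\omega_n)$ is presented by the exterior (super-anticommutation) relations on its generators, it suffices to verify that for all $1\leq i,j \leq n$ the elements $\Omega_n(\omega_i),\Omega_n(\omega_j)\in R_n$ super-anticommute and that $\Omega_n(\omega_i)^2=0$. From the explicit formula for $\phi'_n$ and the definition $\phi'_{i,n}=\phi'_{n-1}\dotsm \phi'_{i}$, an inductive inspection shows that $\phi'_{i,n}(\omega_i)$ is a $\Pol_n$-linear combination of the generators $\omega_i,\omega_{i+1},\dotsc,\omega_n$. In other words, $\Omega_n(\omega_i)$ lies in the odd part of the super-commutative ring $R_n=\Pol_n\otimes \bV(\und{\omega}_n)$. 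Since any two elements of the odd part super-anticommute and any odd element squares to zero (a consequence of the relations~\eqref{eq:nhs1}, in conjunction with the fact that polynomial coefficients lie in the even centre), the required exterior relations are satisfied by the images, and the assignment $\omega_i\mapsto \phi'_{i,n}(\omega_i)$ extends multiplicatively to a well-defined algebra map.

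For (ii), each $\phi'_n\colon R_n\to R_{n+1}$ is grading preserving by~\autoref{thm:pdgphi}, hence so is the composition $\phi'_{i,n}$. Because $\deg(\omega_i)=(-2i,2)$ in both $\bV(\omega_1,\dotsc,\omega_n)$ and $R_n$, the map $\Omega_n$ preserves the bigrading on generators, and therefore on all of $\bV(\omega_1,\dotsc,\omega_n)$.

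For (iii), equip $\bV(\omega_1,\dotsc,\omega_n)$ with the zero derivation. For each generator, $\Omega_n(0)=0$ by definition, so we only need that $\dif_n\bigl(\phi'_{i,n}(\omega_i)\bigr)=0$. This follows from two facts already at hand: the composition $\phi'_{i,n}\colon R_i\to R_n$ is $\dif$-equivariant (it is a composition of $\dif$-equivariant maps by~\autoref{thm:pdgphi}), and $\dif_i(\omega_i)=0$ in $R_i$ by construction of $\dif_i$ (the last line of~\eqref{eq:dif_n}, applied with $n=i$). Hence $\dif_n\bigl(\phi'_{i,n}(\omega_i)\bigr)=\phi'_{i,n}\bigl(\dif_i(\omega_i)\bigr)=0$, as required. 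The only real subtlety is (i) — specifically, confirming that iterating the formula for $\phi'_n$ really does keep one inside the odd summand of $R_n$ — but this is immediate from the shape of the formula, since every term in $\phi'_n(\omega_i)$ contains exactly one $\omega_l$.
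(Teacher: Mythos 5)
Your proposal is correct and follows essentially the same route as the paper: both arguments reduce $\dif$-equivariance to $\dif_n(\phi'_{i,n}(\omega_i))=\phi'_{i,n}(\dif_i(\omega_i))=0$ using the $\dif$-equivariance of the $\phi'_{i,n}$ and $\dif_i(\omega_i)=0$, and both deduce the exterior relations from the observation that the images $\phi'_{i,n}(\omega_i)$ are odd elements of $R_n$ (your proof is, if anything, slightly more careful in also checking that squares vanish). No gaps.
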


\begin{proof}
By the definition of $\Omega_n$ we have that $\dif_n(\Omega_n(\omega_i))=\dif_n(\phi'_{i,n}(\omega_i))=\phi'_{i,n}(\dif_i\omega_i)=0$, which implies that $\Omega_n$ commutes with the derivations.
	Moreover, since for all $1\leq i \leq n$ the morphisms $\phi'_{i,n}$ are even then the elements $\Phi_{i,n}(\omega_i)$ are odd and we have that $\Omega_n(\omega_i\omega_j)=\Omega_n(\omega_i)\Omega_n(\omega_j)=\phi'_{i,n}(\omega_i)\phi'_{j,n}(\omega_i)=-\phi'_{j,n}(\omega_j)\phi'_{i,n}(\omega_i)=-\Omega_n(\omega_i)\Omega_n(\omega_j)=\Omega_n(-\omega_j\omega_i)$, which makes $\Omega_n$ a morphism of algebras.
      \end{proof}
	Recall the the labeled omegas $\omega_k^a$ from~\autoref{defn:labelomega}. If $k=n$ and $a=n-i$ we have that $\omega_n^{n-i}= \sum\limits_{s=0}^{n-i}(-1)^s\hs_{s}(i+s,n)\omega_{i+s}$. The elements $\omega_n^{n-i}$ turn out to be the images of the $\omega_i$ under the map $\Omega_n$.

      \begin{prop} \label{prop:Omega}
  The $\dif$-equivariant homomorphism $\Omega_n:\bigl(\bV ( \omega_1,\dotsc ,\omega_n), 0 \bigr) \longrightarrow (R_n, \dif_n)$, defined in~\autoref{prop:morphOmega}, is such that $$\Omega_n(\omega_i)= \omega_n^{n-i}.$$
\end{prop}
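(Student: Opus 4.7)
The plan is to proceed by induction on $n \geq i$. The base case $n = i$ is immediate: $\phi'_{i,i}$ is the identity on $R_i$, and from~\autoref{defn:labelomega} only the $\ell = i$ summand of $\omega_i^0$ survives (all others have $\hs_{\ell - i}(\ell,i) = 0$ since $\ell - i < 0$), yielding $\omega_i^0 = \hs_0(i,i)\omega_i = \omega_i$. For the inductive step I would write $\phi'_{i,n} = \phi'_{n-1}\circ \phi'_{i,n-1}$ and invoke the induction hypothesis $\phi'_{i,n-1}(\omega_i) = \omega_{n-1}^{n-1-i}$, reducing the problem to the key auxiliary identity
\[
\phi'_{n-1}\bigl(\omega_{n-1}^{a}\bigr) = \omega_n^{a+1} \qquad\text{for } a = n-1-i.
\]

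To attack this identity, I would invoke the formula derived inside the proof of~\autoref{thm:pdgphi}: whenever $f \in R_{n-1}$ is linear in the $\omega$-generators,
\[
\phi'_{n-1}(f) = \Bigl(1 - x_n\tfrac{\partial}{\partial x_n}\Bigr)\iota(f).
\]
Since $\omega_{n-1}^a = \sum_{\ell=1}^{n-1}(-1)^{a+n-1+\ell}\hs_{a+\ell-n+1}(\ell,n-1)\,\omega_\ell$ is manifestly of this form, the auxiliary identity reduces to the concrete equality
\[
\iota\bigl(\omega_{n-1}^a\bigr) - x_n\tfrac{\partial}{\partial x_n}\bigl(\omega_{n-1}^a\bigr) = \omega_n^{a+1}.
\]

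The hardest step is the coefficient-by-coefficient verification of this last equality. I would first apply the Pascal-type recursion
\[
\hs_k(\ell,n) = \hs_k(\ell,n-1) + x_n\hs_{k-1}(\ell,n)
\]
to split the $\omega_\ell$-component of $\omega_n^{a+1}$ (for $1\leq\ell\leq n-1$) into a piece that exactly cancels $\iota(\omega_{n-1}^a)$ plus a residual proportional to $x_n$; separately, the $\omega_n$-component of $\omega_n^{a+1}$ is $(-1)^{a+1}\hs_{a+1}(n,n)\omega_n = (-1)^{a+1}x_n^{a+1}\omega_n$. Matching this residual against $-x_n\frac{\partial}{\partial x_n}(\omega_{n-1}^a)$ — where the derivative is expanded via $\frac{\partial}{\partial x_n}(\omega_\ell) = \sum_{j=\ell+1}^n\prod_{k=\ell+1}^{j-1}(x_k - x_n)\omega_j$ from~\autoref{defn:partialder} and the summation order is swapped to group by $\omega_j$ — then amounts to a telescoping identity on products $\prod_{k=\ell+1}^{j-1}(x_k - x_n)$ weighted by complete homogeneous symmetric polynomials. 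I expect this telescoping to follow by a short induction on $j$ using the companion recursion $\hs_k(\ell,n) = \hs_k(\ell+1,n) + x_\ell\hs_{k-1}(\ell,n)$; it is the main obstacle only because of its bookkeeping, not its depth.
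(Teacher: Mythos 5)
Your proposal is correct and follows essentially the same route as the paper: induction on $n$ reducing to the identity $\phi'_{n-1}(\omega_{n-1}^a)=\omega_n^{a+1}$, whose coefficient-by-coefficient verification is exactly the content of the paper's \autoref{lem:hs1}, itself proved by the Pascal-type recursions of \autoref{lem:hs} that you cite. Writing $\phi'_{n-1}$ as $(1-x_n\tfrac{\partial}{\partial x_n})\iota$ on $\omega$-linear elements is just a repackaging of the defining formula for $\phi'_{n-1}(\omega_\ell)$, so the telescoping computation you defer is the same one the paper carries out.
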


To prove the proposition we need some preparation. Recall the notation  
\begin{lem} \label{lem:hs}
	We have the equality
	$$
	(x_m-x_n)\hs_{l-1}(m,n)+\hs_l(m+1,n)-\hs_l(m,n-1)=0.
	$$
\end{lem}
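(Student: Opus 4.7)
The plan is to derive the identity from the two standard recursions for complete homogeneous symmetric polynomials, one obtained by isolating the smallest variable $x_m$ and one by isolating the largest variable $x_n$.

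More precisely, I would first recall that for any $m\leq n$ one has
\[
\hs_l(m,n) = \hs_l(m+1,n) + x_m\,\hs_{l-1}(m,n),
\]
which is immediate from splitting monomials in $x_m,\dotsc,x_n$ of degree $l$ according to whether $x_m$ appears or not. By the same reasoning applied to the largest variable $x_n$, one also has
\[
\hs_l(m,n) = \hs_l(m,n-1) + x_n\,\hs_{l-1}(m,n).
\]

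Equating the two right-hand sides and rearranging yields
\[
\hs_l(m+1,n) - \hs_l(m,n-1) = (x_n-x_m)\,\hs_{l-1}(m,n),
\]
which is exactly the claim after moving everything to one side. So the entire proof reduces to citing (or briefly proving) these two elementary recursions and subtracting them; there is no real obstacle here, the only thing to be careful about is the boundary case $l=0$ (where $\hs_{-1}=0$ by convention and the identity degenerates to $\hs_0(m+1,n)=\hs_0(m,n-1)=1$) and the edge cases where either $m+1>n$ or $m>n-1$, which are handled by the stated conventions on $\hs_\ell$.
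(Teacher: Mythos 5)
Your proof is correct and follows essentially the same route as the paper: the paper's proof likewise consists of stating the two recursions $\hs_l(m,n)=x_n\hs_{l-1}(m,n)+\hs_l(m,n-1)$ and $\hs_l(m,n)=x_m\hs_{l-1}(m,n)+\hs_l(m+1,n)$ and subtracting them. Your additional remarks on the boundary cases $l=0$ and the degenerate variable ranges are a harmless (and slightly more careful) addition, but nothing essential differs.
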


\begin{proof}
	This equality follows from the easy results
\begin{align*}
\hs_l(m,n)&=x_n\hs_{l-1}(m,n)+\hs_{l}(m,n-1), \\
\hs_l(m,n)&=x_m\hs_{l-1}(m,n)+\hs_{l}(m+1,n).  
\qedhere
\end{align*}
\end{proof}

\begin{lem} \label{lem:hs1}
  Suppose that $n,m,q$ are positive integers such that $q+m\leq n$.
  Then the following equality holds:
	\begin{align*} 
		\sum_{s=0}^{m}(-1)^s\hs_s(q+s,n)\prod_{j=q+1+s}^{q+m}(x_j-x_{n+1})=(-1)^m\hs_{m}(q+m+1,n+1).
	\end{align*}
\end{lem}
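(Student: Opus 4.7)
The plan is to induct on $m$. The base case $m=0$ is immediate: the product $\prod_{j=q+1}^{q}(x_j-x_{n+1})$ is empty and $\hs_0(q,n)=\hs_0(q+1,n+1)=1$.

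For the inductive step, I would isolate the top term $s=m$ from the sum. In that summand the product $\prod_{j=q+m+1}^{q+m}$ is empty, contributing exactly $(-1)^m\hs_m(q+m,n)$. From each of the remaining terms ($s<m$) I would factor out the last factor $(x_{q+m}-x_{n+1})$ of the product, trimming its upper limit to $q+m-1$. The resulting leftover inner sum has precisely the shape of the statement with $m$ replaced by $m-1$, so the induction hypothesis rewrites it as $(-1)^{m-1}\hs_{m-1}(q+m,n+1)$. Collecting signs, the whole left-hand side becomes
\[
(-1)^m\bigl(\hs_m(q+m,n)-(x_{q+m}-x_{n+1})\hs_{m-1}(q+m,n+1)\bigr).
\]

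It then remains to verify the single identity
\[
\hs_m(q+m,n)-(x_{q+m}-x_{n+1})\hs_{m-1}(q+m,n+1)=\hs_m(q+m+1,n+1),
\]
which is a direct rearrangement of \autoref{lem:hs} under the substitutions $m\mapsto q+m$, $n\mapsto n+1$, $l\mapsto m$.

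The only slightly delicate step is the bookkeeping of index shifts when invoking \autoref{lem:hs}: one must be careful that the ``$n$'' appearing in \autoref{lem:hs} (which plays the role of an upper index of $\hs$) maps to $n+1$ here, so that $\hs_l(m,n-1)$ in that lemma matches $\hs_m(q+m,n)$ in our identity. Everything else is routine manipulation of sums and products.
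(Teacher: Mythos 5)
Your proof is correct and follows essentially the same route as the paper's: induction on $m$, splitting off the $s=m$ term, factoring $(x_{q+m}-x_{n+1})$ out of the rest, applying the inductive hypothesis, and finishing with \autoref{lem:hs} under exactly the substitutions you state. The only (immaterial) difference is that you anchor the induction at $m=0$ while the paper starts at $m=1$.
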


\begin{proof}
	We prove the statement by induction on $m$. If $m=1$ the equality is easily verified. Now suppose the equality is true for $m-1$, then
	\begin{align*}
		&\sum_{s=0}^{m}(-1)^s\hs_s(q+s,n)\prod_{j=q+1+s}^{q+m}(x_j-x_{n+1})\\
		&=\sum_{s=0}^{m-1}(-1)^s\hs_s(q+s,n)\prod_{j=q+1+s}^{q+m-1}(x_j-x_{n+1})(x_{q+m}-x_{n+1})+(-1)^m\hs_{m}(q+m,n)\\
		&=(-1)^{m-1}\hs_{m-1}(q+m,n+1)(x_{q+m}-x_{n+1})+(-1)^m\hs_{m}(q+m,n)\\
		&=(-1)^m\hs_m(q+m+1,n+1).
	\end{align*}
We use the induction hypothesis to obtain the second equality, while the last one follows from application of~\autoref{lem:hs}.
\end{proof}

\begin{proof}[Proof of~\autoref{prop:Omega}]
	We proceed by induction on the rank $n$ of the algebras $\bV ( \omega_1,\dotsc ,\omega_n)$ and $R_n$. For $n=1,2$ we can prove the result by straight computation. Suppose now that the claim holds for rank $n$, then:
	\begin{align*}
		\Omega_{n+1}(\omega_i)&=\phi'_{n}\phi'_{n-1}\cdots \phi'_{i}(\omega_i)=\phi'_{n}\Omega_n(\omega_i)=\phi'_{n}(\omega_n ^{n-i})\\
		&=\phi'_{n}\biggl(\sum_{s=0}^{n-i}(-1)^s\hs_{s}(i+s,n)\omega_{i+s}\biggr)\\
		&=\sum_{s=0}^{n-i}(-1)^s\hs_{s}(i+s,n)\phi'_{n,n+1}(\omega_{i+s})\\
		&=\sum_{s=0}^{n-i}(-1)^s\hs_{s}(i+s,n)\biggl(\omega_{i+s}-x_{n+1}\sum_{l=i+s+1}^{n+1}\prod_{j=i+s+1}^{l-1}(x_j-x_{n+1})\omega_l\biggr)\\
		&=\sum_{s=0}^{n-i}(-1)^s\hs_{s}(i+s,n)\omega_{i+s}\\
		& \quad \quad -x_{n+1}\sum_{s=0}^{n-i}\sum_{l=i+s+1}^{n+1}(-1)^s\hs_{s}(i+s,n)\prod_{j=i+s+1}^{l-1}(x_j-x_{n+1})\omega_l.\\
	\end{align*}
Since 
\[
\sum_{s=0}^{n-i}\sum_{l=i+s+1}^{n+1}(-1)^s\hs_{s}(i+s,n) =\sum_{l=i+1}^{n+1}\sum_{s=0}^{l-(i+1)}(-1)^s\hs_{s}(i+s,n) ,
\]
we get
	\begin{align*}
		\Omega_{n+1}(\omega_i)&=\sum_{l=i}^{n}(-1)^{l-i}\hs_{l-i}(l,n)\omega_{l}\\
		& \quad \quad -x_{n+1}\sum_{l=i+1}^{n+1}\sum_{s=0}^{l-(i+1)}(-1)^s\hs_{s}(i+s,n)\prod_{j=i+s+1}^{l-1}(x_j-x_{n+1})\omega_l.\\
		\end{align*}
	We now change the index in the first sum setting $s=l-i$ and rewrite the sum as a linear combination of the omegas:
	\begin{align*}
		\Omega_{n+1}(\omega_i)&=\omega_i+\sum_{l=i+1}^{n} \bigg( (-1)^{l-i}\hs_{l-i}(l,n) \\
		& \quad \quad -x_{n+1}\sum_{s=0}^{l-(i+1)}(-1)^s\hs_{s}(i+s,n)\prod_{j=i+s+1}^{l-1}(x_j-x_{n+1})\bigg)\omega_l\\
		& \quad \quad -x_{n+1}\sum_{s=0}^{n-i}(-1)^s\hs_{s}(i+s,n)\prod_{j=i+s+1}^{n}(x_j-x_{n+1})\omega_{n+1}.\\
	\end{align*}
	Now, using~\autoref{lem:hs1}, we obtain that
		\begin{align*}
		\Omega_{n+1}(\omega_i)&=\omega_i+\sum_{l=i+1}^{n}\bigg((-1)^{l-1}\hs_{l-i}(l,n)-x_{n+1}(-1)^{l-i-1}\hs_{l-i-1}(l,n+1)\biggr)\omega_l\\
		&\quad \quad -x_{n+1}(-1)^{n-i}\hs_{n-i}(n+1,n+1)\omega_{n+1}\\
		&=\omega_i+\sum_{l=i+1}^{n}(-1)^{l-i}\hs_{l-i}(l,n+1)\omega_l-x_{n+1}(-1)^{n-i}\hs_{n-i}(n+1,n+1)\omega_{n+1}\\
		&=\sum_{l=i}^{n+1}(-1)^{l-i}\hs_{l-i}(l,n+1)\omega_{l}=\sum_{s=0}^{n+1-i}(-1)^s\hs_s(i+s,n+1)\omega_{i+s}=\omega_{n+1}^{n+1-i},
	\end{align*}
	which proves the result.
\end{proof}

\begin{cor} \label{cor:symextaction}
	There is a $\dif$-equivariant morphism of algebras
	\begin{align*}
		\bigl( \bV (\omega_n^0,\omega_n^1,\dotsc, \omega_n^{n-1} ), 0\bigr) &\rightarrow (R_n^{\Sy_n},\dif_n)\\ 
		\omega_n^j &\mapsto \omega_n^j \quad \text{ for } \ 0\leq j \leq n-1
	\end{align*}
	which is the inclusion. As a consequence $\dif_n(\omega_n^j)=0$ for $0\leq j \leq n-1.$
\end{cor}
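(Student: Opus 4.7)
The plan is to derive both assertions as direct consequences of Propositions~\ref{prop:Rnsym},~\ref{prop:morphOmega}, and~\ref{prop:Omega}, with essentially no computational content beyond what has already been set up.

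First I would observe that Proposition~\ref{prop:Rnsym} provides an isomorphism $R_n^{\Sy_n} \cong \Pol_n^{\Sy_n} \otimes \bV(\und{\omega}_n^a)$, so that the subalgebra of $R_n^{\Sy_n}$ generated by $\omega_n^0, \dotsc, \omega_n^{n-1}$ is isomorphic, as a graded algebra, to the exterior algebra $\bV(\omega_n^0,\dotsc,\omega_n^{n-1})$. This supplies the underlying inclusion of algebras; only $\dif$-equivariance remains to be verified.

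Next, since the source $\bV(\omega_n^0, \dotsc, \omega_n^{n-1})$ carries the trivial differential, $\dif$-equivariance amounts to showing that $\dif_n$ vanishes on the generators $\omega_n^j$ for $0 \leq j \leq n-1$. Here I would invoke~\autoref{prop:Omega}, which identifies $\omega_n^{n-i}$ with $\Omega_n(\omega_i)$ for $1 \leq i \leq n$. Because $\Omega_n\colon \bigl(\bV(\omega_1,\dotsc,\omega_n),0\bigr) \to (R_n,\dif_n)$ is $\dif$-equivariant by~\autoref{prop:morphOmega}, we obtain
\[
\dif_n(\omega_n^{n-i}) \;=\; \dif_n\bigl(\Omega_n(\omega_i)\bigr) \;=\; \Omega_n(0) \;=\; 0.
\]
Reindexing via $j = n - i$ yields $\dif_n(\omega_n^j) = 0$ for all $0 \leq j \leq n-1$, which is exactly the required vanishing. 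By the Leibniz rule, $\dif$-equivariance then extends from the generators to all of $\bV(\omega_n^0, \dotsc, \omega_n^{n-1})$, completing the proof.

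There is no real obstacle here; the entire content of the corollary has been packaged into the earlier propositions, so the proof is a two-line derivation once the reader is reminded of~\autoref{prop:Rnsym} and~\autoref{prop:Omega}.
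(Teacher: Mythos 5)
Your proposal is correct and follows essentially the same route as the paper: both use \autoref{prop:Rnsym} for the underlying inclusion of algebras and then deduce $\dif_n(\omega_n^{n-i})=\dif_n(\Omega_n(\omega_i))=0$ from the $\dif$-equivariance of $\Omega_n$ established in \autoref{prop:morphOmega} together with the identification $\Omega_n(\omega_i)=\omega_n^{n-i}$ from \autoref{prop:Omega}. Your write-up is if anything slightly more explicit about where the equivariance of $\Omega_n$ enters.
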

\begin{proof}
From~\autoref{prop:Rnsym} we have the isomorphism of algebras
\[
R_n^{\Sy_n}\cong \Pol_n^{\Sy_n}\otimes \bV (\omega_n^0,\omega_n^1,\dotsc, \omega_n^{n-1} ) .
\]
\autoref{prop:Omega} assures that this isomorphism extends to a $\dif$-equivariant isomorphism when endowing the exterior algebra with the trivial differential (note that this does not violate~\autoref{rel:diszero}). Indeed, the action of $\dif_n$ on the $\omega_n^{j}$ is zero for $j=0,\dotsc,n-1$:
\begin{equation*}
    \dif_n(\omega_n^{n-i})=\dif_n(\Omega_n(\omega_{i}))=0.
    \qedhere
 \end{equation*}
\end{proof}

Clearly all the properties of the $\omega_n^{n-i}$ can be translated into properties for the elements $\Omega_n(\omega_{i})$, in particular this allows a recursive definition of $\Omega_n$:

\begin{prop}
	Fix a strictly positive integer $n$. Here, for $0<j<n$, we consider $\Omega_j(\omega_j)$ as an element of $R_n$ instead of $R_j$ (it is enough to compose the morphism with an inclusion), then for $1\leq i <n$ we have the equality
\[
	\Omega_n(\omega_i)=\Omega_{n-1}(\omega_i)-x_n\Omega_{n}(\omega_{i+1}).
\]

\end{prop}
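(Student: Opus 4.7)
My plan is to reduce the identity to a manipulation of complete homogeneous symmetric polynomials, using the explicit formula $\Omega_k(\omega_j)=\omega_k^{k-j}=\sum_{s=0}^{k-j}(-1)^{s}\hs_{s}(j+s,k)\omega_{j+s}$ supplied by the preceding Proposition (understood as an identity in $R_n$ after composing with the inclusions $R_k\hookrightarrow R_n$). Rewriting the statement, it suffices to verify the equality
\[
\omega_n^{n-i}=\omega_{n-1}^{n-1-i}-x_{n}\,\omega_{n}^{n-i-1}
\]
in $R_n$, and then expand both sides as $\Bbbk[x_1,\dots,x_n]$-linear combinations of $\omega_i,\omega_{i+1},\dots,\omega_n$ and compare coefficients term by term.

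First I would expand the right-hand side. The first summand contributes $\sum_{s=0}^{n-1-i}(-1)^{s}\hs_{s}(i+s,n-1)\omega_{i+s}$. For the second summand, after reindexing $s\mapsto s-1$, I get $-x_{n}\omega_n^{n-i-1}=\sum_{s=1}^{n-i}(-1)^{s}x_{n}\hs_{s-1}(i+s,n)\omega_{i+s}$. Collecting coefficients of $\omega_{i+s}$ on the right-hand side gives: for $s=0$, the coefficient $\hs_0(i,n-1)=1$, matching $\hs_0(i,n)=1$ on the left; for $s=n-i$, the coefficient $(-1)^{n-i}x_n\hs_{n-i-1}(n,n)=(-1)^{n-i}x_n^{n-i}=(-1)^{n-i}\hs_{n-i}(n,n)$, again matching the left-hand side; and for the intermediate range $1\leq s\leq n-i-1$, the coefficient is $(-1)^{s}\bigl(\hs_{s}(i+s,n-1)+x_{n}\hs_{s-1}(i+s,n)\bigr)$.

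The only nontrivial step is then the intermediate case, which follows from the elementary recursion
\[
\hs_{s}(i+s,n)=\hs_{s}(i+s,n-1)+x_{n}\hs_{s-1}(i+s,n),
\]
already recalled in the proof of Lemma~\ref{lem:hs}. This matches the left-hand coefficient $(-1)^{s}\hs_{s}(i+s,n)$ and completes the verification.

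I do not expect a real obstacle here: once one unwinds $\Omega_n$ and $\Omega_{n-1}$ via the labelled-$\omega$ formula, the identity is a direct bookkeeping statement about complete homogeneous symmetric polynomials, controlled by a single well-known recursion. The only care needed is the index shift in the $-x_{n}\Omega_n(\omega_{i+1})$ term and the separate treatment of the top and bottom degrees $s=0$ and $s=n-i$, where the recursion degenerates.
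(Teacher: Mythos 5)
Your argument is correct. The paper's own proof of this proposition is a one-line citation: after the identification $\Omega_n(\omega_i)=\omega_n^{n-i}$ from the preceding proposition, it simply observes that the claimed identity \emph{is} the recursive definition of the labelled omegas given in \cite[\S 2.4]{naissevaz2}. You perform the same initial reduction, but instead of citing that recursion you verify it from scratch: you expand both sides via the closed formula $\omega_n^{n-i}=\sum_{s=0}^{n-i}(-1)^s\hs_s(i+s,n)\omega_{i+s}$, compare coefficients of each $\omega_{i+s}$, and close the intermediate cases with the elementary recursion $\hs_s(m,n)=\hs_s(m,n-1)+x_n\hs_{s-1}(m,n)$ already recorded in the proof of \autoref{lem:hs}. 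The index bookkeeping is right (the boundary cases $s=0$ and $s=n-i$, where only one summand contributes, are handled correctly, and $\hs_{n-i-1}(n,n)=x_n^{n-i-1}$ gives the top coefficient). What your version buys is self-containment --- the reader need not unpack the recursive definition from the external reference --- at the cost of a page of symmetric-function manipulation that the paper deliberately outsources.
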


\begin{proof}
	This is a direct consequence of the recursive definition of the labeled omegas given in~\cite[\S$2.4$]{naissevaz2}.
\end{proof}
Observe that this recursive definition is backward on the index $i$ of the omegas. In particular if we know the element $\Omega_{n}(\omega_{i+1})$ then we know $\Omega_{n-1}(\omega_i)$. indeed $\Omega_{n-1}(\omega_i)$ is equal to $\Omega_{n}(\omega_{i+1})$ with all the indexes lowered by $1$:
\begin{align*}
	&\Omega_{n}(\omega_{i+1})=\sum_{s=0}^{n-i-1}(-1)^s\hs_s(i+1+s,n)\omega_{i+1+s},\\
	&\Omega_{n-1}(\omega_{i})=\sum_{s=0}^{n-i-1}(-1)^s\hs_s(i+s,n-1)\omega_{i+s}.
\end{align*}
	We have already shown in~\autoref{prop:difelem} how to express the polynomials coefficients $\alpha_{i,j}$, that appears in the definition of the derivation $\dif_n(\omega_i)=\sum_{j=i+1}^n\alpha_{i,j}\omega_j$, using the elementary symmetric polynomials. Using~\autoref{prop:Omega} we can rewrite them in terms of the complete symmetric polynomials:
\begin{cor}
  The polynomials $\alpha_{i,j}$ introduced in~\autoref{def:alpha_ij} satisfy the recursive formula
\begin{align*}
	\alpha_{n-1,n} &=\dif_n(\hs_1(n,n))=x_n^2,\\
	\alpha_{i,j} &=\sum_{s=1}^{j-i-1}(-1)^{s-1}\hs_s(i+s,n)\alpha_{i+s,j}+(-1)^{j-i-1}\dif_n(\hs_{j-i}(j,n)) .
\end{align*}
\end{cor}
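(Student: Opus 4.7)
The plan is to exploit the fact that, by \autoref{cor:symextaction}, the labeled symmetric element $\omega_n^{n-i} = \Omega_n(\omega_i)$ lies in the kernel of $\dif_n$, and to read off the recursion by comparing coefficients in the expansion
\[
0 = \dif_n(\omega_n^{n-i}) = \dif_n\!\left( \sum_{s=0}^{n-i}(-1)^{s}\hs_{s}(i+s,n)\,\omega_{i+s}\right) .
\]
Using the Leibniz rule and the defining expression $\dif_n(\omega_{i+s}) = \sum_{j=i+s+1}^{n}\alpha_{i+s,j}\,\omega_{j}$ together with $\dif_n(\hs_0(i,n))=\dif_n(1)=0$, this splits as
\[
0 = \sum_{s=1}^{n-i}(-1)^{s}\dif_n(\hs_{s}(i+s,n))\,\omega_{i+s}
+\sum_{s=0}^{n-i}\sum_{j=i+s+1}^{n}(-1)^{s}\hs_{s}(i+s,n)\,\alpha_{i+s,j}\,\omega_{j}.
\]

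Next I would fix $j$ with $i<j\leq n$ and isolate the coefficient of $\omega_j$. Since the $\omega_k$ for $1\leq k\leq n$ are linearly independent over $\Pol_n$, the first sum contributes only the term with $s=j-i$, giving $(-1)^{j-i}\dif_n(\hs_{j-i}(j,n))$, while the second sum contributes $\sum_{s=0}^{j-i-1}(-1)^{s}\hs_{s}(i+s,n)\alpha_{i+s,j}$. Separating the $s=0$ summand (which is exactly $\alpha_{i,j}$ since $\hs_0(i,n)=1$) and solving yields
\[
\alpha_{i,j} = \sum_{s=1}^{j-i-1}(-1)^{s-1}\hs_{s}(i+s,n)\,\alpha_{i+s,j} + (-1)^{j-i-1}\dif_n(\hs_{j-i}(j,n)),
\]
which is the desired recursion. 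The base case $\alpha_{n-1,n}=x_n^2$ follows by taking $i=n-1$, $j=n$: the sum is empty, and the last term reduces to $\dif_n(\hs_1(n,n))=\dif_n(x_n)=x_n^2$.

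The reasoning here is essentially formal: $\dif$-equivariance of $\Omega_n$ is doing all the work, and there is no delicate combinatorial identity to verify (unlike in \autoref{lem:pdiff} or \autoref{prop:Omega}). The only point requiring mild care is the bookkeeping of the index ranges when swapping the two sums and justifying that coefficients of distinct $\omega_j$'s can be equated independently. This linear independence is clear because $R_n\cong \Pol_n\otimes\bV(\und\omega_n)$ as graded $\Pol_n$-modules, so the $\omega_j$ form part of a $\Pol_n$-basis. Thus the main (and only) obstacle is essentially clerical, and the identity falls out immediately from the vanishing of $\dif_n$ on the $\Sy_n$-invariant labeled element $\omega_n^{n-i}$.
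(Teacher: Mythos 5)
Your proposal is correct and follows essentially the same route as the paper: both expand $0=\dif_n(\Omega_n(\omega_i))=\dif_n\bigl(\sum_{s}(-1)^s\hs_s(i+s,n)\omega_{i+s}\bigr)$ via the Leibniz rule, substitute $\dif_n(\omega_{i+s})=\sum_j\alpha_{i+s,j}\omega_j$, and compare coefficients of each $\omega_j$ using that the $\omega_j$ are part of a $\Pol_n$-basis of $R_n$. The only cosmetic difference is that the paper solves for $\dif_n(\omega_i)$ and reindexes the sums explicitly, whereas you fix $j$ and read off its coefficient directly; the content is identical.
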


\begin{proof}
	from~\autoref{prop:Omega} we have
	\begin{align*}
		\dif_n(\Omega_n(\omega_i))=\dif_n\biggl(\sum_{s=0}^{n-i}(-1)^s\hs_s(i+s,n)\omega_{i+s}\biggr)=0 ,
	\end{align*}
and therefore 
	\begin{align*}
		\dif_n(\omega_i)
        &=\sum_{s=1}^{n-i}(-1)^{s-1}\bigl(\dif_n(\hs_s(i+s,n))\omega_{i+s}+\hs_s(i+s,n)\dif_n(\omega_{i+s})
		\bigr)\\
		&=\sum_{s=1}^{n-i-1}(-1)^{s-1}\biggl(\dif_n(\hs_s(i+s,n))\omega_{i+s}+\hs_s(i+s,n)\sum_{j=i+s+1}^{n}\alpha_{i+s,j}\omega_j\biggr)\\
		& \quad \quad +(-1)^{n-i-1}\dif_n(\hs_{n-i}(n,n))\omega_{n}\\
		&=\sum_{j=i+1}^{n-1}(-1)^{j-(i+1)}\dif_n(\hs_{j-i}(j,n))\omega_{j}+\sum_{j=i+2}^{n}\sum_{s=1}^{j-(i+1)}(-1)^{s-1}\hs_s(i+s,n)\alpha_{i+s,j}\omega_j\\
		&\quad \quad +(-1)^{n-i-1}\dif_n(\hs_{n-i}(n,n))\omega_{n}\\
		&=\dif_n(\hs_1(i+1,n))\omega_{i+1}\\
		&\quad \quad +\sum_{j=i+2}^{n-1}\biggl((-1)^{j-(i+1)}\dif(\hs_{j-i}(j,n))+\sum_{s=1}^{j-(i+1)}(-1)^{s-1}\hs_s(i+s,n)\alpha_{i+s,j}\biggr)\omega_j\\
		&\quad \quad +(\sum_{s=1}^{n-(i+1)}(-1)^{s-1}\hs_s(i+s,n)\alpha_{i+s,n}+(-1)^{n-i-1}\dif_n(\hs_{n-i}(n,n)))\omega_n.
	\end{align*}
	Since $\alpha_{i,j}$ is the polynomial coefficient of $\omega_j$ we see that the statement is verified.
\end{proof}
Until now we have endowed $R_n$ with a derivation $\dif_n$ which commutes with the action of the symmetric group, making it a well defined derivation for the subalgebra $R_n^{\Sy_n}$. At the very beginning of this section we introduced the balanced free left $(R_n^{\Sy_n}, \dif_n)$-module $\cR_n=R_n.v$, which is generated by $v$ and with a derivation such that $\dif_n(v)=\sum_{i=1}^n\alpha_ix_iv$, where the coefficients $\alpha_i$ are such that the difference $\alpha_{i+1}-\alpha_i=a$ is a fixed constant. The endomorphism algebra of $\cR_n$ is the algebra $A_n=\End_{R^{\Sy_n}}(\cR_n)$. The derivation $\dif_n$ defined on $\cR_n$ allows us to define a derivation $\dif_a$ on $A_n$, depending on the constant $\alpha_{i+1}-\alpha_i=a$, by the formula:
\[
\dif_a(\xi)(f.v)=\dif_{n}(\xi(f.v))-\xi(\dif_n(f.v)) .
\]
From this it follows that $\dif_a$ depends on $n$ and is equal to the derivation $\dif_n$ on the subalgebra $R_n$, while its action on $T_i$ is described in~\eqref{eq:pdgdemazure}. The above can be summarized in the following (note that we have taken $a=1$ in~\eqref{eq:pdgdemazure}, which is what we usually do in practice, but the results in the rest of this section hold for any $a$).
\begin{prop}\label{prop:Andiff}
The algebra $A_n$ admits a derivation $\dif_a$ which is defined on generators by 
    \begin{align*}
	   \dif_a(x_i) &= x_i^2,\\
	   \dif_a(\omega_i) &= \sum_{j=i+1}^n\alpha_{i,j}\omega_j,\\
	   \dif_a(T_i) &= a-(a+1)x_iT_i+(a-1)x_{i+1}T_i.
    \end{align*}
When the ground field is $\kp$ this differential is $p$-nilpotent, making $(A_n,\dif_a)$ a $p$-dg algebra.
\end{prop}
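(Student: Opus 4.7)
The proof is a straightforward assembly of the constructions and results developed throughout this section. First, I would collect the derivation $\dif_n$ on $R_n$ specified by $\dif_n(x_i) = x_i^2$ and $\dif_n(\omega_i) = \sum_{j=i+1}^n \alpha_{i,j}\omega_j$, with $\alpha_{i,j}$ as in~\autoref{def:alpha_ij}. Since $\dif_n$ is $\Sy_n$-equivariant (\autoref{prop:pdiff} together with the discussion preceding it), it restricts to a derivation on $R_n^{\Sy_n}$. Extending by Leibniz to $\cR_n = R_n \cdot v$ via $\dif_n(v) = g_a v$, where $g_a = \sum_i \alpha_i x_i$ with $\alpha_{i+1} - \alpha_i = a$, endows $\cR_n$ with the structure of a $p$-dg module over $R_n^{\Sy_n}$. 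The induced derivation on $A_n = \End_{R_n^{\Sy_n}}(\cR_n)$, given by
\[
\dif_a(\xi)(f \cdot v) = \dif_n(\xi(f\cdot v)) - \xi(\dif_n(f\cdot v)),
\]
is the derivation in the statement; the Leibniz rule is the standard commutator-of-derivations verification.

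The formulas on generators are then routine. For $x_i$ and $\omega_i$ (acting by left multiplication), one has $\dif_a(x_i) = \dif_n(x_i) = x_i^2$ and $\dif_a(\omega_i) = \dif_n(\omega_i) = \sum_{j=i+1}^n \alpha_{i,j}\omega_j$ directly from the definition. For $T_i$, which acts on $\cR_n$ as the divided difference on the polynomial part, the computation is exactly that of Khovanov and Qi reproduced in~\eqref{eq:pdgdemazure}, since the interaction with $\dif_n(v)=g_a v$ produces the $a$-dependent coefficients and the $\omega$-part plays no role.

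Finally, for $p$-nilpotency over $\kp$, I would appeal to the standard fact (Jacobson) that in characteristic $p$ the $p$-th iterate of a derivation is itself a derivation. Hence it suffices to verify that $\dif_a^p$ annihilates each generator of $A_n$. On the subalgebra $R_n\subset A_n$ containing $x_i$ and $\omega_i$, the derivation $\dif_a$ restricts to $\dif_n$, so $\dif_a^p(x_i)=\dif_a^p(\omega_i)=0$ by~\autoref{prop:pnilp}. On the subalgebra $\nh_n\subset A_n$ containing $T_i$, $\dif_a$ restricts to Khovanov--Qi's $p$-differential, whose $p$-nilpotency is established in~\cite{KhQi}. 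Thus $\dif_a^p$ is a derivation that vanishes on a generating set of $A_n$ and is therefore identically zero.

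No substantial obstacle arises, since every constituent has been proved in the preceding subsections. The only mildly delicate point is the invocation of the derivation property of $\dif_a^p$, which elegantly sidesteps a direct verification of $p$-nilpotency on arbitrary monomials in $x_i$, $\omega_i$, and $T_i$; without this observation one would face a combinatorial mess coming from the non-trivial relation~\eqref{eq:nhs3} between the $T_i$ and the $\omega_j$.
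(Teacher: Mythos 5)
Your construction of $\dif_a$ is exactly the paper's: the $\Sy_n$-equivariant derivation $\dif_n$ on $R_n$ restricts to $R_n^{\Sy_n}$, extends to $\cR_n=R_n.v$ by $\dif(v)=g_\alpha v$, and induces the commutator derivation on $A_n=\End_{R_n^{\Sy_n}}(\cR_n)$, which yields the stated formulas on generators. Where you genuinely diverge is the $p$-nilpotency, for which the paper offers no explicit argument. The paper's implicit mechanism (via~\cite[\S 5]{Qi}) is that the induced differential on the endomorphism algebra of a $p$-dg module is automatically $p$-nilpotent, since in characteristic $p$ the $p$-th iterate of the commutator collapses to a commutator with the $p$-th power of the module differential; but this requires knowing that the twisted differential on $\cR_n$ itself, including the term $g_\alpha v$, is $p$-nilpotent --- a point that~\autoref{prop:pnilp} settles only for $R_n$ and which otherwise rests on the Khovanov--Qi condition $\alpha\in\bF_p^n$. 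Your route --- observe that $\dif_a^p$ is itself a derivation in characteristic $p$ and kill it on a generating set, using~\autoref{prop:pnilp} for the $x_i$ and $\omega_i$ and~\cite{KhQi} for the $T_i$ --- sidesteps that module-level verification entirely and is, if anything, the more self-contained argument; its only cost is the standard (and correctly invoked) fact that $p$-th powers of derivations are derivations, which applies here because $\dif_a$ is even for the super structure and so obeys the ordinary Leibniz rule. Both arguments are sound. One small point worth making explicit in your write-up: the $R_n^{\Sy_n}$-linearity of $\dif_a(\xi)$, i.e.\ the fact that the commutator formula actually lands back in $A_n$, is precisely where the $\Sy_n$-equivariance of $\dif_n$ enters, so it deserves a sentence rather than being absorbed into ``the standard commutator-of-derivations verification.''
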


\begin{prop} \label{prop:inclusion}
	There exists a grading preserving homomorphism of algebras $\phi_{n}$ from $A_{n}$ to $A_{n+1}$ which is injective and intertwines the derivation. It is defined on generators by
	\begin{align*}
		\phi_{n}(\omega_i)&=\omega_i-\sum\limits_{l=i+1}^{n+1}x_{n+1}\prod\limits_{j=i+1}^{l-1}(x_j-x_{n+1})\omega_l &&\text{ for } 1\leq i \leq n,\\[2ex]
		\phi_{n}(x_i)&=x_i  &&\text{ for } 1\leq i \leq n,\\[2ex]
		\phi_{n}(T_j)&=T_j    &&\text{ for } 1\leq j \leq n-1,
	\end{align*}
	where we use the convention that $\prod_{j=i+1}^{i}(x_j-x_{n+1})=1$.
\end{prop}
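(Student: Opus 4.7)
The plan is to check four properties of $\phi_n$: (i) it respects all defining relations of $A_n$ and so extends to a well-defined algebra homomorphism, (ii) it preserves the bi-grading, (iii) it commutes with the derivation, and (iv) it is injective. The key observation at the outset is that the restriction of $\phi_n$ to the subalgebra $R_n\subset A_n$ coincides with the map $\phi'_n$ of \autoref{thm:pdgphi}, so that on $R_n$ properties (i)--(iii) hold for free.

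For (i), the nilHecke relations among $x_i,T_j$ are mapped to themselves, and the $\omega$-$\omega$ and $x$-$\omega$ relations are preserved because $\phi'_n$ is a homomorphism. The substantive check is that of the $T$-$\omega$ relations \eqref{eq:nhs2} and \eqref{eq:nhs3}. Since $T_i=(1-s_i)/(x_i-x_{i+1})$ acts on $R_{n+1}$, it commutes in $A_{n+1}$ with multiplication by an element $f\in R_{n+1}$ if and only if $s_i(f)=f$, so both relations reduce to $s_i$-invariance statements. Writing $\phi'_n(\omega_j)=\bigl(1-x_{n+1}\tfrac{\partial}{\partial x_{n+1}}\bigr)(\omega_j)$ as in the proof of \autoref{thm:pdgphi}, I plan to establish $s_i$-invariance of $\phi'_n(\omega_j)$ for $i\neq j$ by a case analysis on the relative position of $i$ and $j$; the only nontrivial case, $i>j$, collapses to a cancellation between the $l=i$ and $l=i+1$ summands driven by the identity $s_i(\omega_i)=\omega_i+(x_i-x_{i+1})\omega_{i+1}$. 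For the relation \eqref{eq:nhs3}, direct expansion and simplification will give
\[
\phi_n(\omega_i)-x_{i+1}\phi_n(\omega_{i+1})=\omega_i-(x_{i+1}+x_{n+1})\omega_{i+1}+x_{n+1}^2\sum_{l=i+2}^{n+1}\prod_{k=i+2}^{l-1}(x_k-x_{n+1})\omega_l,
\]
and the only $s_i$-moving terms are the first two, whose change cancels by the same identity.

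For (ii), grading preservation of $\phi'_n$ is part of \autoref{thm:pdgphi} and degrees of $T_j$ match on the nose. For (iii), $\dif$-equivariance on $R_n$ is also \autoref{thm:pdgphi}, and on $T_j$ it is immediate because $\dif_a(T_j)$ is a polynomial in $x_j,x_{j+1},T_j$, which $\phi_n$ fixes. For (iv), I will use the algebra decomposition $A_n\cong \bV(\und{\omega}_n^a)\otimes\nh_n$ from \eqref{eq:AnNhdecomposition}. The restriction of $\phi_n$ to $\nh_n$ is the canonical inclusion $\nh_n\hookrightarrow\nh_{n+1}$, hence injective; combining the tower relation $\phi'_n\circ\phi'_{i,n}=\phi'_{i,n+1}$ with \autoref{prop:Omega} yields $\phi_n(\omega_n^j)=\omega_{n+1}^{j+1}$, which identifies the exterior factor of $A_n$ with the subalgebra of $\bV(\und{\omega}_{n+1}^a)$ on $n$ of its $n+1$ generators and so is also injective. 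The main obstacle is the case analysis in step (i), which is routine but needs careful bookkeeping of the $s_i$-action on the sum defining $\phi'_n(\omega_j)$.
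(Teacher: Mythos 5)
Your proposal is correct, and for well-definedness and $\dif$-equivariance it follows essentially the same route as the paper: restrict to $\nh_n$ and to $R_n$ (where \autoref{thm:pdgphi} does the work), then verify the one genuinely mixed relation~\eqref{eq:nhs3} by expanding $\phi_n(\omega_i)-x_{i+1}\phi_n(\omega_{i+1})$. Your simplification of that difference agrees with the paper's (the paper writes the correction term as $x_{n+1}\bigl(\omega_{i+1}-\sum_{l\geq i+2}\dotsm\bigr)=x_{n+1}\phi_n(\omega_{i+1})$ and invokes the commutation relations in $A_{n+1}$, whereas you phrase the same fact as $s_i$-invariance of the multiplication operator in the faithful representation on $R_{n+1}$ --- these are equivalent). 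Two points where you genuinely diverge, both to your credit: first, you treat relation~\eqref{eq:nhs2} explicitly, with the cancellation between the $l=i$ and $l=i+1$ summands under $s_i$ (which I checked: $(x_i-x_{i+1})+(x_{i+1}-x_{n+1})=x_i-x_{n+1}$ restores the original coefficient), while the paper folds this case into ``the same reasoning'' even though it mixes $T$'s and $\omega$'s; second, your injectivity argument via the decomposition~\eqref{eq:AnNhdecomposition} and the identity $\phi_n(\omega_n^j)=\omega_{n+1}^{j+1}$ (which does follow from $\phi'_n\circ\phi'_{i,n}=\phi'_{i,n+1}$ and \autoref{prop:Omega}) replaces the paper's lexicographic leading-term argument on monomials $\omega_{i_1}\dotsm\omega_{i_\lambda}$. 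Both injectivity proofs are valid; yours buys a cleaner structural statement (the map is a tensor product of two injections with respect to the free $\bV(\und{\omega}_n^a)$-module structure), at the cost of having to say a word about why injectivity on each tensor factor suffices --- namely that $A_{n+1}$ is free over $\bV(\und{\omega}_{n+1}^a)$ on a basis of $\nh_{n+1}$ and the subsets $S\mapsto S+1$ remain distinct --- which you should spell out when writing this up.
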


\begin{proof}
	The map $\phi_{n}$ is clearly a $\dif$-equivariant morphism of algebras when restricted on the nilHecke subalgebras $\nh_n$ because it coincides with the inclusion $\nh_{n}\hookrightarrow \nh_{n+1}$  and the derivation acts on the nilHecke generators locally (see~\eqref{eq:pdgx} and~\eqref{eq:pdgdemazure}). Moreover $(R_{n},\dif_n)$ is a subalgebra of $(A_{n},\dif_a)$, and $\phi_{n}$ coincides with $\phi'_{n}$ when restricted to $R_{n}$. It follows that $\phi_{n}(\dif_{n}\omega_i)=\dif_{n+1}\phi_{n}(\omega_i)$, as proven in~\autoref{prop:Omega}. Then, $\phi_{n}$ commutes with the derivation on the generators of $A_{n}$, and therefore on every element of $A_{n}$.\\
    The same reasoning shows that $\phi_{n}$ respects relations~\eqref{eq:nh1},~\eqref{eq:nh2},~\eqref{eq:nh3},~\eqref{eq:nhs1} and~\eqref{eq:nhs2}.
	Then, to prove that $\phi_{n}$ is a morphism of algebras we just need to show that 
 \[
\phi_{n}(T_i(\omega_i-x_{i+1}\omega_{i+1}))=\phi_{n}((\omega_i-x_{i+1}\omega_{i+1})T_i) . 
 \]
Observe that $i<n+1$ and
	\begin{align*}
		&\phi_{n}(T_i(\omega_i-x_{i+1}\omega_{i+1}))=T_i(\phi_{n}(\omega_i)-x_{i+1}\phi_{n}(\omega_{i+1}))=\\
		&=T_i(\omega_i-x_{i+1}\omega_{i+1})-T_i(\sum_{l=i+1}^{n+1}x_{n+1}\!\!\prod_{j=i+1}^{l-1}(x_j-x_{n+1})\omega_l-x_{i+1}\sum_{l=i+2}^{n+1}x_{n+1}\!\!\prod_{j=i+2}^{l-1}(x_j-x_{n+1})\omega_l)\\
		&=(\omega_i-x_{i+1}\omega_{i+1})T_i-T_i\Bigl(\sum_{l=i+1}^{n+1}x_{n+1}\!\!\prod_{j=i+1}^{l-1}(x_j-x_{n+1})\omega_l-x_{i+1}\sum_{l=i+2}^{n+1}x_{n+1}\!\!\prod_{j=i+2}^{l-1}(x_j-x_{n+1})\omega_l\Bigr) .
	\end{align*}
    Since
  \begin{align*}
		&\sum_{l=i+1}^{n+1}x_{n+1}\prod_{j=i+1}^{l-1}(x_j-x_{n+1})\omega_l-x_{i+1}\sum_{l=i+2}^{n+1}x_{n+1}\prod_{j=i+2}^{l-1}(x_j-x_{n+1})\omega_l\\
		&= x_{n+1}\omega_{i+1}+(x_{i+1}-x_{n+1})\sum_{l=i+2}^{n}x_{n+1}\!\!\prod_{j=i+2}^{l-1}(x_j-x_{n+1})\omega_l-x_{i+1}\sum_{l=i+2}^{n+1}x_{n+1}\!\!\prod_{j=i+2}^{l-1}(x_j-x_{n+1})\omega_l\\
		&=x_{n+1}\Bigl(\omega_{i+1}-\sum_{l=i+2}^{{n+1}}x_{n+1}\prod_{j=i+2}^{l-1}(x_j-x_{n+1})\omega_l\Bigr),
	\end{align*}
we have 
    \begin{align*}
      &T_i\Bigl(\sum_{l=i+1}^{{n+1}}x_{n+1}\prod_{j=i+1}^{l-1}(x_j-x_{n+1})\omega_l-x_{i+1}\sum_{l=i+2}^{n+1}x_{n+1}\prod_{j=i+2}^{l-1}(x_j-x_{n+1})\omega_l\Bigr)\\
        &=\Bigl(\sum_{l=i+1}^{{n+1}}x_{n+1}\prod_{j=i+1}^{l-1}(x_j-x_{n+1})\omega_l-x_{i+1}\sum_{l=i+2}^{n+1}x_{n+1}\prod_{j=i+2}^{l-1}(x_j-x_{n+1})\omega_l\Bigr)T_i ,
    \end{align*}
and therefore
    \[
        \phi_{n}(T_i(\omega_i-x_{i+1}\omega_{i+1}))=\phi_{n}((\omega_i-x_{i+1}\omega_{i+1})T_i) . 
    \]

    We have left to prove that $\phi_{n}$ is injective. Let $v\neq 0$ be an homogeneous element of $A_{n}$ of degree $(q,2\lambda)$ where $\lambda \geq0$. Then
    $$
        v=\sum_{1\leq i_1< i_2 < \cdots < i_{\lambda} \leq n-1} v_{(i_1,\ldots, i_\lambda)}\omega_{i_1}\cdots \omega_{i_\lambda},
    $$
    where $v_{(i_1,\ldots, i_\lambda)}$ are homogeneous elements of $\nh_{n}$. If $\lambda=0$ then $v=v_0\in \nh_{n}$. We order the ordered $\lambda$-tuples $(i_1,\ldots,i_\lambda)$ using the lexicographic order and we denote the smallest one by $(i'_1,\ldots,i'_\lambda)$. Then
    $$
        \phi_{n}(v)=\sum_{1\leq i_1< i_2 < \cdots < i_{\lambda} \leq n-1} v_{(i_1,\ldots, i_\lambda)}\omega_{i_1}\cdots \omega_{i_\lambda}+\substack{\text{Elements indexed by} \\ \text{strictly greater ordered} \\ \text{$\lambda$-tuples than $(i'_1,\ldots,i'_\lambda)$}} \ ,
    $$
    which implies that $\phi_{n}(v)\neq 0$. Due to the fact that $\phi_{n}$ is grading preserving we can conclude that $\ker(\phi_{n})=\{0\}$.
\end{proof}

The pair $(\dif_n,\phi_n)$ are not necessarily unique (see~\autoref{ex:notuniq}). We don't investigate other derivations
\begin{ex}\label{ex:notuniq}
Recall that for each $n$, the derivation is defined by its action on $\omega_1$. 
For example, for $n=2,3$ one could take
\[
\dif'_2(\omega_1) = -x_1^2\omega_2 ,\mspace{60mu}\dif'_2(\omega_2) = 0 ,
\]
and
\begin{gather*}
\dif'_3\omega_3 = 0 ,
\mspace{60mu}
\dif'_3\omega_2 = x_3^2\omega_3 + \alpha, 
\\
\dif'_3\omega_1 = (x_2^2+x_3^2)\omega_2+x_3^2(x_2-x_3)\omega_3+\beta -x_1\alpha ,
\end{gather*}
with $\alpha \in R_3^{\Sy_3}$, of $q$-degree $-2$, and $\beta \in R_3^{\Sy_3}$ of $q$-degree $0$,
and proceed recursively. We leave to the reader to find the appropriate homomorphism $\phi'_n$. 
\end{ex}


\section{The Grothendieck group of \texorpdfstring{$(A,\dif)$}{(A,d)} and baby Verma modules}
\label{sec:cataction}

\subsection{Baby Verma modules}

Let $q\in\bC$ be a primitive root of unity of order $p$.

\begin{defn} \label{def:smallqg}
    The \emph{small quantum group} $\smslt$ is the unital $\bC$-algebra with generators
 $E$, $F$, $K^{\pm 1}$ and relations
\begin{gather*}
    KK^{-1}=K^{-1}K=1,\qquad  KE=q^2EK, \qquad KE=q^2EK,
    \\
    EF-FE=\frac{K-K^{-1}}{q-q^{-1}},\qquad E^p=0,\qquad F^p=0. 
\end{gather*}
This is a finite-dimensional algebra and has a Hopf algebra structure (see for example~\cite[\S9.3A-\S9.3B]{chari-pressley}).
\end{defn}
We are interested in a particular representation of $\smslt$ which is called baby Verma module in~\cite[\S3.2]{AndersenMazorchuk}. 
Denote by $b_q\subset\smslt$ the standard Borel subalgebra and let $\lambda$ be a formal parameter. 
Le also $\bC_\lambda$ be a 1-dimensional representation of the Cartan subalgebra extended trivially to $b_q$. 
\begin{defn}
The \emph{baby Verma module} $M(\lambda)$ is the induced module
 \[
M(\lambda) := \smslt\otimes_{b_q}\bC_\lambda .
\]   
\end{defn}
The module $M(\lambda)$ is a $p$-dimensional $\bC[\lambda^{\pm 1}]$-module and can be obtained as the specialization to $\smslt$ of the module $V(\lambda,0,0)$ given in~\cite[Ex. 11.1.7]{chari-pressley}.
Its rational form $M(\lambda)\otimes_{\bC[\lambda]}\bC(\lambda)$ is irreducible by the results of~\cite[\S11]{chari-pressley}.
It has a $\bC[\lambda^{\pm 1}]$-basis $\{m_0,\dotsc, m_{p-1}\}$ where the action of $\smslt$ is given by 
\begin{align*}
	Km_i &=\lambda q^{-2i}m_i ,\\[1ex]
	Fm_i &=
	\begin{cases}
		m_{i+1} &\text{ if }\ 0\leq i < p-1 ,\\
		0, &\text{ if }\ i=p-1 ,
	\end{cases}\\[1ex]
	Em_{i} &=
	\begin{cases}
		0 & \text{ if }\ i=0 ,\\
		[i]\frac{\lambda q^{1-i}-\lambda^{-1}q^{i-1}}{q-q^{-1}}m_{i-1} & \text{ if }\ 0 < i \leq p-1 , 
\end{cases}
\end{align*}
(cf.~\cite[Ex. 11.1.7]{chari-pressley}). Here $[i]=q^{i-1}+q^{i-3}+ \cdots + q^{3-i}+q^{1-i}$ is the quantum number.

\begin{rem}
A baby Verma module occurs as the finite-dimensional submodule generated by the highest weight vector of the universal Verma module for $U_q(\slt)$ after specializing $q$ to a root of unity.    
\end{rem}

\subsection{The Grothendieck group of \texorpdfstring{$(A_n,\dif)$}{(A,d)}}
\label{sec:GgroupAn}

Throughout this section we work over $\Bbbk_p$.
We now show that under particular hypothesis the Grothendieck group $\mathbf{K}_0(A_n)\cong \bO_p$ (\autoref{cor:Kgroup}). We will prove it by establishing that the derived categories $\cD(A_n)$ and $\cD(\bV(\und{\omega}_n^a)\otimes \Bbbk)$ are equivalent, and that $\mathbf{K}_0(\bV(\und{\omega}_n^a)\otimes \Bbbk)\cong \bO_p$. 

\vspace{0.1in}

Let $\cR_n^+ \coloneq R_n\cdot v_+$ be the $p$-dg $R_n$-module generated by $v_+$ and endowed with the differential $\dif_+ (v_+)=\sum_{i=1}^n(i-n)x_iv_+$. Since $R_n^{\Sy_n}\subset R_n$ is a $p$-dg subalgebra, $\cR_n^+$ is also a $R_n^{\Sy_n}$ $p$-dg module which is free with the set $\{ x_1^{b_1}x_2^{b_2}\cdots x_n^{b_n}v_+ \ | 0\leq b_i \leq n-i \}$ as a $\dif_+$-stable basis. 
The vector space $U_n^+\coloneq \Bbbk\left\langle x_1^{b_1}x_2^{b_2}\cdots x_n^{b_n}v_+ \ | 0\leq b_i \leq n-i \right\rangle$ is $\dif_+$-stable, and therefore the isomorphisms in~\eqref{eq:R_niso} are $p$-dg isomorphisms of $p$-dg modules
\[
	\cR_n^+\cong R_n^{\Sy_n} \otimes U_n^+ \cong \Pol_n^{\Sy_n}\otimes \bV(\und{\omega}_n^a)\otimes U_n^+.
\]

We define the $p$-dg algebra $(A_n,\dif)$ to be the $p$-dg endomorphism algebra
\[
A_n\coloneq \End_{R_n^{\Sy_n}}(\cR_n^+).
\]
As a consequence of the stability of $U_n^+$ under the action of the differential $\dif_+$, the isomorphism~\eqref{eq:Andecomposition} becomes an isomorphism of $p$-dg $R_n^{\Sy_n}$-modules 
\begin{align*}
	A_n\coloneq \End_{R_n^{\Sy_n}}(\cR_n^+) \cong R_n^{\Sy_n}\otimes \End_\Bbbk(U_n^+) \cong \Pol_n^{\Sy_n}\otimes \bV (\und{\omega}_n^a)\otimes \mat(n!,\Bbbk).
\end{align*}
Remember from~\autoref{cor:symextaction}, the differential $\dif_+$ acts trivially on  $\bV(\und{\omega}_n^a)$.
The differential in $U_n^+$ induces a differential on $\End_\Bbbk(U_n^+)\cong \mat(n!,\Bbbk)$ which characterize the $p$-dg structure of $A_n$. This is clearly the same differential that makes the following isomorphisms
\begin{align*}
	\nh_n \coloneq \End_{\Pol_n^{\Sy_n}}(\Pol_n.v_+)\cong \Pol_n^{\Sy_n}\otimes \End_\Bbbk(U_n^+) \cong \Pol_n^{\Sy_n}\otimes \mat(n!,\Bbbk),
\end{align*}
being $p$-dg. As a consequence, isomorphism~\eqref{eq:AnNhdecomposition} becomes an isomorphism of $p$-dg algebras
\begin{equation} \label{eq:iso3}
	A_n \cong \bV (\und{\omega}_n^a) \otimes \nh_n.
\end{equation}

Moreover, as showed in~\cite[\S$2.3$]{KhQi} every derivation on the matrix algebra arises as taking a commutator with some fixed matrix. In particular for a $p$-nilpotent derivation the matrix is a block matrix where each block is a Jordan block.
If all the jordan blocks are of size less than or equal to $p$ and at least one block is of dimension strictly less then $p$, then the column module $\mat(n!,\Bbbk)E_{n!,n!}$ is a compact cofibrant generator of $\cD(\mat(n!,\Bbbk))$~\cite[Proposition $2.43$]{KhQi} and it induces an equivalence of categories (\cite[Corollary $2.45$]{KhQi})
\[
  \mat(n!,\Bbbk)E_{n!,n!}\otimes(-):\cD(\Bbbk) \rightarrow \cD(\mat(n!,\Bbbk)).
\]
Since $\nh_n$ and $\mat(n!,\Bbbk)$ are quasi-isomorphic and the category $\cD(\nh_n)$ is equivalent to $\cD(\Pol_n^{\Sy_n})$, we can state the following important result, which makes possible most of the results that will follow.

\begin{prop}
  For any $1\leq n <p$ the differential induced by the $p$-dg algebras $\nh_n$ and $A_n$ on the matrix algebra $\mat(n!,\Bbbk)$ is defined by a $n!\times n!$ block matrix where
\begin{enumerate}
\item  each block is a Jordan block with dimension less than or equal to $p$ and,
\item at least one block has dimension strictly less than $p$.
\end{enumerate}
  If $n\geq p$ the algebra $A_n$ is acyclic.
\end{prop}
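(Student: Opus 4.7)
The plan is to reduce the analysis to the nilHecke case already settled by Khovanov and Qi. The isomorphism of $p$-dg algebras $A_n \cong \bV(\und{\omega}_n^a) \otimes \nh_n$ from~\eqref{eq:iso3}, combined with the triviality of $\dif$ on $\bV(\und{\omega}_n^a)$ established in~\autoref{cor:symextaction}, shows that the derivation induced on $\mat(n!,\Bbbk)$ is the same whether obtained via the decomposition $\nh_n \cong \Pol_n^{\Sy_n} \otimes \mat(n!,\Bbbk)$ or via $A_n \cong \Pol_n^{\Sy_n} \otimes \bV(\und{\omega}_n^a) \otimes \mat(n!,\Bbbk)$. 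In both cases it arises from the action of $\dif_+$ on the Schubert space $U_n^+$ through $\End_\Bbbk(U_n^+) \cong \mat(n!,\Bbbk)$, and it is $p$-nilpotent by~\autoref{prop:pnilp}. As every $p$-nilpotent derivation on a matrix algebra is inner, this reduces the first two bulleted claims to a Jordan-form analysis of the matrix of $\dif_+$ acting on $U_n^+$.

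For part (1), with $1 \leq n < p$, the bound that each Jordan block has dimension at most $p$ is immediate from $p$-nilpotency. The claim that at least one block has dimension strictly less than $p$ is equivalent to $U_n^+$ having non-vanishing cohomology as a $p$-complex over $\Bbbk$, since an indecomposable elementary $p$-complex $\Bbbk[\dif]/(\dif^k)$ has trivial cohomology exactly when $k=p$. Non-vanishing of this cohomology follows because $\nh_n$ is not acyclic: Khovanov and Qi's computation, cited in the introduction, yields $\mathbf{K}_0(\nh_n,\dif) \cong \bO_p \neq 0$ for $n < p$, which is incompatible with acyclicity.

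For part (2), with $n \geq p$, one invokes the analogous acyclicity of $\nh_n$ from Khovanov and Qi's work, so every Jordan block of the matrix differential on $\mat(n!,\Bbbk)$ has size exactly $p$, making $\mat(n!,\Bbbk)$ free, hence acyclic, over $\Bbbk[\dif]/(\dif^p)$. Acyclicity of $A_n$ then follows from the K\"unneth-type identity $H(A_n) \cong H(\nh_n) \otimes \bV(\und{\omega}_n^a)$, valid because $\dif$ acts trivially on the exterior factor. The main potential obstacle, should the acyclicity of $\nh_n$ for $n\geq p$ not be available verbatim from Khovanov--Qi, is to construct directly a $\dif_+$-stable decomposition of $U_n^+$ into free rank-one $\Bbbk[\dif]/(\dif^p)$-modules; the character identity $[n]_q! = [p]_q \cdot [1]_q \dotsm [p-1]_q [p+1]_q \dotsm [n]_q$ supplies the correct numerical shape, but promoting this to an actual $\dif_+$-stable decomposition of $U_n^+$ is the delicate step.
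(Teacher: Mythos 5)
Your proposal is correct and follows essentially the same route as the paper: reduce everything to the nilHecke case through the $p$-dg isomorphism $A_n \cong \bV(\und{\omega}_n^a)\otimes\nh_n$ with trivial differential on the exterior factor, and quote Khovanov--Qi both for the Jordan-block structure of the induced derivation on $\mat(n!,\Bbbk)$ and for the acyclicity of $\nh_n$ when $n\geq p$ (their Proposition 3.15, so your closing hedge is unnecessary). The only variation is that you justify the claim that at least one block has size strictly less than $p$ via the non-vanishing of $\mathbf{K}_0(\nh_n,\dif)\cong\bO_p$, whereas the paper simply appeals to the discussion preceding the proposition; your argument is a legitimate, slightly more explicit way to fill in that step.
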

\begin{proof}
	The first statement of the proposition follows directly from the considerations made just above. While, when $n\geq p$ the $p$-dg algebra $\nh_n$ is acyclic (\cite[Proposition $3.15$]{KhQi}) and as a consequence of isomorphism~\eqref{eq:iso3} $A_n$ is also acyclic.
\end{proof}
Consider the ideal $\widebar{\Pol}_n^{\Sy_n}$ of $\Pol_n^{\Sy_n}$ which consists of the symmetric polynomials without constant term. The set
$\widebar{\Pol}_n^{\Sy_n}\cdot \cR_n^+$ is a $R_n^{\Sy_n}$-submodule of $\cR_n^+$, we can thus consider its quotient $\cR_n^+ /\left(\widebar{\Pol}_n^{\Sy_n}\cdot \cR_n^+ \right)\cong U_n^+\otimes \bV(\und{\omega}_n^a)$. Since $\widebar{\Pol}_n^{\Sy_n}\cdot \cR_n^+$ is stable under the action of $\dif_+$ the quotient module $\cR_n^+ /\left(\widebar{\Pol}_n^{\Sy_n}\cdot \cR_n^+\right)$ is $p$-dg, moreover it is easy to see that the isomorphism $\cR_n^+ /\left(\widebar{\Pol}_n^{\Sy_n}\cdot \cR_n^+ \right)\cong U_n^+\otimes \bV(\und{\omega}_n^a)$ lifts to a $p$-dg isomorphism, where $U_n^+$ and $\bV(\und{\omega}_n^a)$ are endowed with the usual differentials. Since each endomorphism of $\cR_n^+$ preserves the submodule $\widebar{\Pol}_n^{\Sy_n}\cdot \cR_n^+$ we get a $\dif_+$-equivariant projection
\[
    \pi_n : A_n=\End_{R_n^{\Sy_n}}(\cR_n^+) \rightarrow \End_{R_n^{\Sy_n}}(\cR_n^+/\widebar{\Pol}_n^{\Sy_n}\cdot \cR_n^+)\cong \End_\Bbbk(U_n^+)\otimes \bV (\und{\omega}_n^a).
\]
    
\begin{prop}
	The projection
	\[
		\pi_n:A_n \proj \bV(\und{\omega}_n^a)\otimes \mat(n!,\Bbbk)
	\]
	is a quasi-isomorphism of $p$-dg algebras.
\end{prop}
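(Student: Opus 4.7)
The plan is to leverage the $p$-dg algebra isomorphism \eqref{eq:iso3}, namely $A_n \cong \bV(\und{\omega}_n^a) \otimes \nh_n$ in which the differential acts trivially on the exterior factor (by \autoref{cor:symextaction}), to reduce the statement to the analogous result for the nilHecke algebra. Under this decomposition the target $\bV(\und{\omega}_n^a)\otimes \mat(n!,\Bbbk)$ of $\pi_n$ is naturally $\mathrm{id}_{\bV(\und{\omega}_n^a)}\otimes \mat(n!,\Bbbk)$, and I claim $\pi_n$ takes the form $\mathrm{id}_{\bV(\und{\omega}_n^a)} \otimes \pi'$, where $\pi' \colon \nh_n \to \mat(n!,\Bbbk)$ is the Khovanov--Qi projection obtained by quotienting out the augmentation ideal $\widebar{\Pol}_n^{\Sy_n}$.

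To make this identification, I would trace the $p$-dg decomposition $\cR_n^+ \cong \Pol_n^{\Sy_n} \otimes \bV(\und{\omega}_n^a) \otimes U_n^+$ and observe that the $R_n^{\Sy_n}$-submodule $\widebar{\Pol}_n^{\Sy_n}\cdot \cR_n^+$ corresponds to the $\dif_+$-stable submodule $\widebar{\Pol}_n^{\Sy_n}\otimes \bV(\und{\omega}_n^a) \otimes U_n^+$. Consequently, taking $R_n^{\Sy_n}$-endomorphisms of the quotient kills precisely the $\widebar{\Pol}_n^{\Sy_n}$-factor and leaves $\bV(\und{\omega}_n^a)\otimes \End_\Bbbk(U_n^+) \cong \bV(\und{\omega}_n^a)\otimes \mat(n!,\Bbbk)$. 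This exhibits $\pi_n$ as $\mathrm{id}\otimes \pi'$ at the level of $p$-dg algebras, since the exterior factor is left untouched by the quotient.

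Having made the identification, the conclusion follows from two ingredients: (a) the statement, invoked by Khovanov--Qi to deduce the equivalence $\cD(\nh_n)\simeq \cD(\Bbbk)$, that $\pi' \colon \nh_n \to \mat(n!,\Bbbk)$ is a quasi-isomorphism of $p$-dg algebras; and (b) the fact that tensoring a quasi-isomorphism with a cofibrant $p$-complex again yields a quasi-isomorphism. Since $\bV(\und{\omega}_n^a)$ carries the trivial differential and is finite-dimensional, it is a cofibrant $p$-complex, so $\mathrm{id}_{\bV(\und{\omega}_n^a)}\otimes \pi'$ remains a quasi-isomorphism.

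The main obstacle is justifying step (a) honestly---not merely invoking the abstract fact that $\nh_n$ and $\mat(n!,\Bbbk)$ are quasi-isomorphic, but producing the specific morphism $\pi'$ realizing this quasi-isomorphism. Concretely, one reduces this to showing that the augmentation $(\Pol_n^{\Sy_n},\dif)\to (\Bbbk,0)$, which sends all elementary symmetric polynomials to zero, is a quasi-isomorphism of $p$-dg algebras; this is exactly the content that underlies Khovanov--Qi's analysis of the cohomology of the $p$-dg nilHecke algebra for $n<p$.
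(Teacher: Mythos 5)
Your proposal is correct and follows essentially the same route as the paper: the paper's proof simply invokes the $p$-dg isomorphism $A_n\cong \Pol_n^{\Sy_n}\otimes \bV(\und{\omega}_n^a)\otimes \mat(n!,\Bbbk)$ together with the fact that $(\Pol_n^{\Sy_n},\dif)$ is quasi-isomorphic to $\Bbbk$, which is exactly the key input you isolate in your step (a). Your version merely regroups the tensor factors through $\nh_n\cong\Pol_n^{\Sy_n}\otimes\mat(n!,\Bbbk)$ and spells out the (harmless) point that tensoring with the exterior factor, which carries the trivial differential, preserves quasi-isomorphisms.
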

\begin{proof}
	It follows from the isomorphism of $p$-dg algebras
	\[
		A_n\cong \Pol_n^{\Sy_n}\otimes \bV (\und{\omega}_n^a)\otimes \mat(n!,\Bbbk),
	\]
	and the fact that $\Pol_n^{\Sy_n}$ is quasi-isomorphic to $\Bbbk$.
\end{proof}
\begin{cor}
	There is an equivalence of triangulated categories
	\begin{equation} \label{eq:iso1}
		\pi_n^\star :\cD(A_n) \rightarrow \cD(\bV(\und{\omega}_n^a)\otimes \mat(n!,\Bbbk)).
	\end{equation}
\end{cor}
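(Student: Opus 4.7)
The plan is to deduce this corollary directly from the preceding proposition together with the general fact that a quasi-isomorphism of $p$-dg algebras induces an equivalence on derived categories. This principle is the $p$-dg analogue of the classical dg-algebra statement and is established in the hopfological setting by Qi (see~\cite[\S 7]{Qi}); it is used repeatedly in~\cite{KhQi}. Concretely, given a map of $p$-dg algebras $f\colon A\to B$, one has an adjunction between restriction and extension of scalars on the level of $p$-dg module categories, which descends to an adjunction between the derived categories $\cD(A)$ and $\cD(B)$. When $f$ is a quasi-isomorphism, the unit and counit of this adjunction become isomorphisms on compact cofibrant generators, hence on all of $\cD(B)$ and $\cD(A)$, giving an equivalence of triangulated categories.

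With that framework in hand, the proof is a direct application: the previous proposition shows $\pi_n\colon A_n \twoheadrightarrow \bV(\und{\omega}_n^a)\otimes \mat(n!,\Bbbk)$ is a quasi-isomorphism of $p$-dg algebras, so one quotes the general fact and identifies the functor $\pi_n^\star$ as either the derived extension of scalars $\bV(\und{\omega}_n^a)\otimes \mat(n!,\Bbbk)\otimes^{\mathbf{L}}_{A_n}(-)$ (reading the arrow as written), with quasi-inverse given by restriction of scalars along $\pi_n$. Triangulatedness of the equivalence is automatic since both functors are exact with respect to the triangulated structures on the derived categories.

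The only point worth being careful about is that the general equivalence requires $\pi_n$ to be a quasi-isomorphism in the appropriate $p$-dg sense (i.e.\ inducing an isomorphism on cohomology of the underlying $p$-complexes), and that both $A_n$ and its target are cofibrant enough over themselves for the derived tensor product to behave well. The first is the content of the preceding proposition, and the second holds because $p$-dg algebras are automatically cofibrant as modules over themselves in the Hopfological model structure of~\cite{Qi}, so no further verification is needed. The main obstacle, therefore, is purely bookkeeping: making sure the conventions for $\pi_n^\star$ agree with those of the cited equivalence-of-derived-categories theorem, so that the arrow points in the direction asserted in the statement.
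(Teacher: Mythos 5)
Your proposal matches the paper's (implicit) argument: the corollary is deduced directly from the preceding proposition that $\pi_n$ is a quasi-isomorphism of $p$-dg algebras, combined with the standard hopfological fact (Qi, and used throughout~\cite{KhQi}) that such a quasi-isomorphism induces mutually inverse derived induction/restriction equivalences. The paper offers no further proof, so your spelled-out version of the adjunction argument is exactly the intended route.
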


\begin{lem} \label{lem:exactfunctor}
	The functors
\begin{align*}
	\bV(\und{\omega}_n^a)\otimes(-) &:\mat(n!,\Bbbk)\amod \rightarrow \bV(\und{\omega}_n^a)\otimes \mat(n!,\Bbbk)\amod,
 \\
	\bV(\und{\omega}_n^a)\otimes(-) &:\mat(n!,\Bbbk)_\dif\amod \rightarrow (\bV(\und{\omega}_n^a)\otimes \mat(n!,\Bbbk))_\dif\amod, 
\end{align*}
 are exact.
\end{lem}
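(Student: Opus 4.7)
The plan is to reduce both statements to the fact that tensoring with a free $\Bbbk$-module is exact, using the crucial observation from \autoref{cor:symextaction} that the differential $\dif$ acts trivially on $\bV(\und{\omega}_n^a)$.

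First I would observe that as a $\Bbbk$-module $\bV(\und{\omega}_n^a)$ is free of finite rank $2^n$, so the endofunctor $\bV(\und{\omega}_n^a)\otimes(-)$ on $\Bbbk\amod$ is exact (it sends a short exact sequence to a direct sum of $2^n$ copies of the original one). For a $\mat(n!,\Bbbk)$-module $M$, the module structure on $\bV(\und{\omega}_n^a)\otimes M$ comes from letting $\bV(\und{\omega}_n^a)$ act on itself by left multiplication and $\mat(n!,\Bbbk)$ act on $M$. Consequently, given a short exact sequence
\[
0\to M'\to M\to M''\to 0
\]
of $\mat(n!,\Bbbk)$-modules, applying $\bV(\und{\omega}_n^a)\otimes(-)$ produces a short exact sequence of $\Bbbk$-vector spaces whose maps are morphisms of $\bV(\und{\omega}_n^a)\otimes\mat(n!,\Bbbk)$-modules. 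Exactness at the level of $\Bbbk$-vector spaces implies exactness at the level of $\bV(\und{\omega}_n^a)\otimes\mat(n!,\Bbbk)$-modules, establishing the first statement.

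For the second statement, the key input is \autoref{cor:symextaction}: the differential vanishes on $\bV(\und{\omega}_n^a)$, so if $(M,\dif_M)$ is a $p$-dg $\mat(n!,\Bbbk)$-module, the Leibniz rule forces the differential on $\bV(\und{\omega}_n^a)\otimes M$ to be $\mathrm{id}\otimes\dif_M$. In particular, for any $p$-dg morphism $f\colon M\to N$, the tensor $\mathrm{id}\otimes f$ commutes with the differentials and is a morphism of $p$-dg modules over $\bV(\und{\omega}_n^a)\otimes\mat(n!,\Bbbk)$. A short exact sequence of $p$-dg modules gives, by the previous paragraph, a short exact sequence of $\bV(\und{\omega}_n^a)\otimes\mat(n!,\Bbbk)$-modules, and the maps automatically commute with the induced differentials, so the sequence is short exact in $(\bV(\und{\omega}_n^a)\otimes\mat(n!,\Bbbk))_\dif\amod$.

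There is essentially no obstacle here: the lemma is formal once one invokes freeness of $\bV(\und{\omega}_n^a)$ over $\Bbbk$ and the triviality of $\dif$ on the exterior factor. The only point requiring care is making sure that the $p$-dg structure on the tensor product is indeed $\mathrm{id}\otimes\dif_M$, which is precisely what \autoref{cor:symextaction} provides.
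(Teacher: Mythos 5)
Your proposal is correct and follows essentially the same route as the paper: both arguments reduce exactness to the underlying $\Bbbk$-vector spaces (where tensoring with the finite-dimensional, hence free, space $\bV(\und{\omega}_n^a)$ is exact) and then check that a $p$-dg morphism is sent to a $p$-dg morphism, using that the differential acts as $\mathrm{id}\otimes\dif_M$ on the tensor product. Your explicit appeal to \autoref{cor:symextaction} to justify the form of the differential is a point the paper's proof uses only implicitly, so if anything your write-up is slightly more careful.
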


\begin{proof}
	Consider the composition of $\bV(\und{\omega}_n^a)\otimes(-)$ with the forgetful functor
	\[
	\mat(n!,\Bbbk)\amod \xrightarrow{\bV(\und{\omega}_n^a)\otimes (-)} \bV(\und{\omega}_n^a)\otimes \mat(n!,\Bbbk)\amod \rightarrow \Bbbk\amod.
	\]
	Since $\bV(\und{\omega}_n^a)$ is a finite dimensional $\Bbbk$-vector space this composition is an exact functor. This implies that $\bV(\und{\omega}_n^a)\otimes (-)$ is exact because being an injective or surjective function is a set-theoretic concept.
	To prove that $\bV(\und{\omega}_n^a)\otimes (-)$ is an exact $p$-dg functor it is enough to show that the image of a $p$-dg morphism is a $p$-dg morphism. This is trivial, indeed for $f$ a $p$-dg morphism in $\mat(n!,\Bbbk)_\dif\amod$ we have
	\begin{align*}
		\left(\bV(\und{\omega}_n^a)\otimes f\right)(\dif (a\otimes b))&=\left(\id\otimes f\right) (a\otimes \dif b)=\id(a)\otimes f(\dif b)\\
		& =\id(a)\otimes \dif f( b)=\dif (\id(a)\otimes f( b)). \qedhere
 \end{align*}
\end{proof}

\begin{prop} \label{prop:compcof}
	If $1\leq n < p$, the $p$-dg $\bV(\und{\omega}_n^a)\otimes \mat(n!,\Bbbk)$-module $\bV(\und{\omega}_n^a)\otimes \mat(n!,\Bbbk)E_{n!,n!}$ is compact and cofibrant. 
\end{prop}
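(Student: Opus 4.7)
The plan is to exhibit $\bV(\und{\omega}_n^a)\otimes \mat(n!,\Bbbk)E_{n!,n!}$ as a direct summand of the free rank-one $p$-dg $B$-module $B:=\bV(\und{\omega}_n^a)\otimes \mat(n!,\Bbbk)$, cut out by a $\dif$-closed idempotent. Compactness and cofibrancy then pass automatically to direct summands of the free rank-one module.

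First I would identify the module in the statement with $Be$ for the element $e:=1\otimes E_{n!,n!}\in B$, which is an idempotent since $E_{n!,n!}$ is. The key point to verify is that $e$ is $\dif$-closed. Because $\dif$ acts trivially on $\bV(\und{\omega}_n^a)$ by~\autoref{cor:symextaction}, we have $\dif(e)=1\otimes \dif(E_{n!,n!})$, so the question reduces to $\dif(E_{n!,n!})=0$ in $\mat(n!,\Bbbk)$. Under the standing hypothesis $1\leq n<p$, the preceding proposition says that $\dif$ on $\mat(n!,\Bbbk)$ is conjugation by a block-diagonal Jordan matrix with at least one Jordan block of size strictly less than $p$, and the placement of $E_{n!,n!}$ in Khovanov--Qi's~\cite[Prop.~2.43]{KhQi} is the bottom-right matrix unit of precisely such a block, on which the commutator vanishes.

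Once $\dif(e)=0$ is in hand, there is a direct sum decomposition of $p$-dg $B$-modules $B=Be\oplus B(1-e)$. Since $B$ is free of rank one over itself, it is tautologically compact --- the functor $\operatorname{Hom}_B(B,-)$ is the forgetful functor, which commutes with direct sums --- and cofibrant as the generator of the cofibrant class. Both properties descend to direct summands, so $Be$ is compact and cofibrant, as desired.

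The principal obstacle is the $\dif$-closedness of $E_{n!,n!}$; once that is secured, everything else is formal. A more conceptual alternative, which avoids a positional argument, is to invoke the exact functor $\bV(\und{\omega}_n^a)\otimes(-)$ of~\autoref{lem:exactfunctor}: it takes the free rank-one $\mat(n!,\Bbbk)$-module to $B$, commutes with mapping cones, direct sums and summands (and by Hom-tensor adjunction $\operatorname{Hom}_B(\bV(\und{\omega}_n^a)\otimes M,-)\cong \operatorname{Hom}_{\mat(n!,\Bbbk)}(M,-|_{\mat(n!,\Bbbk)})$ it preserves compactness). It therefore sends the compact cofibrant module $\mat(n!,\Bbbk)E_{n!,n!}$ --- which is such under the hypothesis $1\leq n<p$ by~\cite[Prop.~2.43]{KhQi} and the preceding proposition --- to the compact cofibrant $B$-module of the statement.
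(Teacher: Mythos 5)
Your primary argument has a genuine gap: the matrix unit $E_{n!,n!}$ is \emph{not} $\dif$-closed, so $e=1\otimes E_{n!,n!}$ does not split $B=\bV(\und{\omega}_n^a)\otimes \mat(n!,\Bbbk)$ as a direct sum of $p$-dg modules. The differential on $\mat(n!,\Bbbk)\cong\End_\Bbbk(U_n^+)$ is the commutator $[D,-]$ with the nilpotent block-Jordan matrix $D$ of $\dif_+$ on $U_n^+$, and $[D,E_{n!,n!}]=0$ would force the last basis vector to span a Jordan block of size one. The ``bottom-right unit of a Jordan block'' is exactly where the commutator does \emph{not} vanish once the block has size at least two: with $D$ strictly upper triangular one has $E_{n!,n!}D=0$ (the last row of $D$ is zero) but $DE_{n!,n!}=D_{n!-1,n!}E_{n!-1,n!}\neq 0$. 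Already for $n=2$ the space $U_2^+$ is a single Jordan block of size two ($\dif_+(v_+)=-x_1v_+$, $\dif_+(x_1v_+)=0$ in the quotient), so no diagonal matrix unit is $\dif$-closed. What is true is that the column module is a $p$-dg \emph{submodule} of the free module and that the free module is \emph{filtered} by shifted copies of it --- this is precisely the content of~\cite[Proposition 2.43]{KhQi} and of~\autoref{prop:compcogen}; if your claimed splitting existed, $\mat(n!,\Bbbk)$ would decompose as a $p$-dg direct sum of its columns, which only happens for $D=0$. And being a $p$-dg submodule of a free module does not imply cofibrancy, so the gap cannot be patched by weakening ``summand'' to ``submodule''.

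Your fallback argument is, in substance, the paper's proof. The paper takes the finite cell module $\mat(n!,\Bbbk)E_{n!,n!}\oplus N$ furnished by~\cite[Proposition 2.24 v)]{KhQi}, applies the exact functor $\bV(\und{\omega}_n^a)\otimes(-)$ of~\autoref{lem:exactfunctor} to its finite filtration, and checks that the subquotients remain finite direct sums of shifted free modules, so that $\bV(\und{\omega}_n^a)\otimes\mat(n!,\Bbbk)E_{n!,n!}$ is a direct summand of a finite cell module and hence compact and cofibrant. If you pursue that route, replace the Hom-tensor hand-wave about compactness by this characterization (compact cofibrant $=$ direct summand of a finite cell module), which is exactly what the filtration argument delivers.
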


\begin{proof}
the $p$-dg $\mat(n!,\Bbbk)$-module $\mat(n!,\Bbbk)E_{n!,n!}$ is compact and cofibrant (\cite[Proposition $2.43$]{KhQi}). As a consequence of~\cite[Proposition $2.24$ v)]{KhQi} there exist a $p$-dg $\mat(n!,\Bbbk)$-module $N$ such that $\mat(n!,\Bbbk)E_{n!,n!}\oplus N$ is a finite cell module, that is, there is a finite exhaustive filtration of $p$-dg $\mat(n!,\Bbbk)$-modules
	\[
	0=F_{-1}\subset F_0 \subset \cdots \subset F_r \subset F_{r+1} \subset \cdots \subset F_s=\mat(n!,\Bbbk)E_{n!,n!}\oplus N
	\]
	such that the quotients $F_{r+1}/F_r$ are isomorphic, as $\mat(n!,\Bbbk)$-modules, to finite direct sums of the free $\mat(n!,\Bbbk)$-modules $\mat(n!,\Bbbk)$ with shifted degrees, for all $0<r<s$. 
 
 We now consider the $p$-dg $\bV(\und{\omega}_n^a)\otimes \mat(n!,\Bbbk) $-module $\bV(\und{\omega}_n^a)\otimes N$. Clearly
	\[
	\left(\bV(\und{\omega}_n^a)\otimes \mat(n!,\Bbbk)E_{n!,n!}\right)\oplus (\bV(\und{\omega}_n^a)\otimes N) \cong \bV(\und{\omega}_n^a)\otimes(\mat(n!,\Bbbk)E_{n!,n!}\oplus N).
	\]
	The module $\bV(\und{\omega}_n^a)\otimes(\mat(n!,\Bbbk)E_{n!,n!}\oplus N)$ turns out to be a finite cell $\bV(\und{\omega}_n^a)\otimes \mat(n!,\Bbbk)$-module. Indeed, by defining $\bar{F}_r \coloneq \bV(\und{\omega}_n^a)\otimes F_r$, for all $0\leq r \leq s$, we obtain a finite exhaustive filtration of $p$-dg $\bV(\und{\omega}_n^a)\otimes \mat(n!,\Bbbk)$-modules:
	\[
	0=\bar{F}_{-1}\subset F_0 \subset \cdots \subset \bar{F}_r \subset \bar{F}_{r+1} \subset \cdots \subset \bar{F}_s=\bV(\und{\omega}_n^a)\otimes \mat(n!,\Bbbk)E_{n!,n!}\oplus N.
	\]
	Since $\bV(\und{\omega}_n^a)\otimes(-)$ is an exact functor, as shown in~\autoref{lem:exactfunctor}, from the short exact sequence of $\mat(n!,\Bbbk)$-modules
	\[
	0\rightarrow F_r \rightarrow F_{r+1} \rightarrow  \frac{F_{r+1}}{F_r}\rightarrow 0,
	\]
	we obtain the short exact sequence of $\bV(\und{\omega}_n^a)\otimes \mat(n!,\Bbbk)$-modules
	\[
	0\rightarrow \bar{F}_r \rightarrow \bar{F}_{r+1} \rightarrow \bV(\und{\omega}_n^a)\otimes \frac{F_{r+1}}{F_r}\rightarrow 0,
	\]
	and $\bar{F}_{r+1}/\bar{F}_r\cong \bV(\und{\omega}_n^a)\otimes (F_{r+1}/F_r)$. Because $F_{r+1}/F_r$ is a finite direct sum of the free $\mat(n!,\Bbbk)$-modules $\mat(n!,\Bbbk)$ with shifted degrees it follows that $\bar{F}_{r+1}/\bar{F}_r$ is isomorphic to a finite direct sum of the free $\bV(\und{\omega}_n^a)\otimes \mat(n!,\Bbbk)$-modules $\bV(\und{\omega}_n^a)\otimes \mat(n!,\Bbbk)$ with shifted degrees.
	We have just shown that the $p$-dg $\bV(\und{\omega}_n^a)\otimes \mat(n!,\Bbbk)$-module $\bV(\und{\omega}_n^a)\otimes \mat(n!,\Bbbk)E_{n!,n!}$ is a direct summand of a finite cell module, which implies that it is compact cofibrant.
\end{proof}

\begin{prop} \label{prop:compcogen}
	If $1\leq n < p$, the $p$-dg $\bV(\und{\omega}_n^a)\otimes \mat(n!,\Bbbk)$-module $\bV(\und{\omega}_n^a)\otimes \mat(n!,\Bbbk)E_{n!,n!}$ is a compact cofibrant generator of $\cD(\bV(\und{\omega}_n^a)\otimes \mat(n!,\Bbbk))$.
\end{prop}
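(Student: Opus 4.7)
The compactness and cofibrancy of $M := \bV(\und{\omega}_n^a)\otimes \mat(n!,\Bbbk)E_{n!,n!}$ was established in~\autoref{prop:compcof}, so only the generator property remains. My plan is to reduce to the Khovanov--Qi statement that $N := \mat(n!,\Bbbk)E_{n!,n!}$ is a compact cofibrant generator of $\cD(\mat(n!,\Bbbk))$, which holds under the standing hypothesis $n < p$.

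Since $N$ is a compact generator, the regular $p$-dg module $\mat(n!,\Bbbk)$ lies in the thick triangulated subcategory of $\cD(\mat(n!,\Bbbk))$ generated by $N$; concretely, it can be built from $N$ by finitely many shifts, cones, and passages to direct summands. I then apply the tensor functor
\[
F := \bV(\und{\omega}_n^a)\otimes_\Bbbk (-) \colon \cD(\mat(n!,\Bbbk)) \longrightarrow \cD(\bV(\und{\omega}_n^a)\otimes \mat(n!,\Bbbk)) ,
\]
which by~\autoref{lem:exactfunctor} is exact at the $p$-dg module level. Because $\bV(\und{\omega}_n^a)$ carries the zero differential (\autoref{cor:symextaction}), the underlying $p$-complex of $F(X)$ is a finite direct sum of grading-shifted copies of the underlying $p$-complex of $X$. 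Hence $F$ preserves quasi-isomorphisms and descends to a triangulated functor on derived categories. It sends $N$ to $M$ and $\mat(n!,\Bbbk)$ to the regular module $\bV(\und{\omega}_n^a)\otimes \mat(n!,\Bbbk)$, so the latter belongs to the thick subcategory $\langle M\rangle$ of $\cD(\bV(\und{\omega}_n^a)\otimes \mat(n!,\Bbbk))$ generated by $M$.

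To finish, the regular module $\bV(\und{\omega}_n^a)\otimes \mat(n!,\Bbbk)$ is a compact generator of its own derived category (this is true for any $p$-dg algebra). Since this generator sits inside $\langle M\rangle$ and $M$ is itself compact, the standard triangulated-category principle that a thick subcategory containing a compact generator equals the full compact derived category forces $M$ to be a compact generator of $\cD(\bV(\und{\omega}_n^a)\otimes \mat(n!,\Bbbk))$. The only subtle step is verifying that $F$ descends to the derived level; the key observation making this essentially formal is the triviality of the differential on $\bV(\und{\omega}_n^a)$, which reduces $F$ to an iterated direct sum operation at the level of underlying $p$-complexes. The rest is Morita-style bookkeeping.
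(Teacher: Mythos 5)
Your argument is correct, and it rests on the same two inputs as the paper's proof: Khovanov--Qi's Proposition 2.43 for the column module over $\mat(n!,\Bbbk)$, and the exactness of $\bV(\und{\omega}_n^a)\otimes(-)$ from~\autoref{lem:exactfunctor}. The difference is in how you transport the generation statement. The paper stays entirely at the level of modules: it tensors the explicit Khovanov--Qi filtration $0\subset F_1\subset\cdots\subset F_{n!}=\mat(n!,\Bbbk)$ with $\bV(\und{\omega}_n^a)$, checks via exactness that the subquotients of the resulting filtration of the regular module are grading-shifted copies of $\bV(\und{\omega}_n^a)\otimes \mat(n!,\Bbbk)E_{n!,n!}$, and concludes. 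You instead pass to derived categories first: you verify that $\bV(\und{\omega}_n^a)\otimes(-)$ descends to a triangulated functor (using the triviality of the differential on $\bV(\und{\omega}_n^a)$, via~\autoref{cor:symextaction}, so that quasi-isomorphisms are preserved), and then argue with thick subcategories, invoking that the regular module is a compact generator of $\cD$ of any $p$-dg algebra and that a thick subcategory containing a generator yields generation. Both routes are sound; yours is more formal and would generalize to replacing $\bV(\und{\omega}_n^a)$ by any finite-dimensional $p$-dg algebra with zero differential, while the paper's avoids having to justify that the tensor functor descends to the derived level and needs nothing beyond the filtration itself. One small point worth making explicit in your write-up: your claim that $\mat(n!,\Bbbk)$ lies in the \emph{thick} subcategory generated by the column module is most cheaply justified not by Neeman's abstract characterization of compacts but by the very same finite filtration from \cite[Proposition 2.43]{KhQi} that the paper uses, which exhibits the regular module as a finite iterated extension of shifted column modules with no passage to summands needed.
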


\begin{proof}
	Still in~\cite[Proposition $2.43$]{KhQi} it has been proven that $\mat(n!,\Bbbk)$ has a filtration of $p$-dg modules
	\[
		0\subset F_1 \subset \cdots \subset F_{n!}=\mat(n!,\Bbbk) 
	\]
	such that $F_{r+1}/F_r$ is isomorphic to the column module $\mat(n!,\Bbbk)E_{n!,n!}$. Defining $\bar{F}_r=\bV(\und{\omega}_n^a)\otimes F_r$ gives a filtration of $p$-dg modules
	\[
		0\subset \bar{F}_1 \subset \cdots \subset \bar{F}_{n!}=\bV(\und{\omega}_n^a)\otimes \mat(n!,\Bbbk).
	\]
	Tensoring the short exact sequence obtained from the first filtration
	\[
		0 \rightarrow F_r \rightarrow F_{r+1} \rightarrow \frac{F_r}{F_{r+1}} \rightarrow 0
	\]
	we obtain the exact sequence
	\[
		0 \rightarrow \bar{F}_r \rightarrow \bar{F}_{r+1} \rightarrow \bV(\und{\omega}_n^a)\otimes\frac{F_r}{F_{r+1}} \rightarrow 0.
	\]
	As a consequence the quotients $\bar{F}_{r+1}/\bar{F}_{r}$ are isomorphic to $\bV(\und{\omega}_n^a)\otimes \mat(n!,\Bbbk)E_{n!,n!}$ with the grading shifted. This implies that $\bV(\und{\omega}_n^a)\otimes \mat(n!,\Bbbk)E_{n!,n!}$ is a compact generator of $\cD(\bV(\und{\omega}_n^a)\otimes \mat(n!,\Bbbk))$.
\end{proof}

\begin{prop}
	The functor 
	\begin{equation} \label{eq:iso2}
		(\bV(\und{\omega}_n^a)\otimes \mat(n!,\Bbbk)E_{n!,n!})\otimes(-):\cD(\bV(\und{\omega}_n^a)\otimes \Bbbk) \rightarrow \cD(\bV(\und{\omega}_n^a)\otimes \mat(n!,\Bbbk))
	\end{equation}
	is an equivalence of derived categories.
\end{prop}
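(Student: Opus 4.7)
The plan is to invoke the general $p$-dg Morita theorem in the same spirit as \cite[Corollary 2.45]{KhQi}. Propositions~\ref{prop:compcof} and~\ref{prop:compcogen} have already established the two crucial facts: $P := \bV(\und{\omega}_n^a)\otimes \mat(n!,\Bbbk)E_{n!,n!}$ is a compact cofibrant object, and it generates $\cD(\bV(\und{\omega}_n^a)\otimes \mat(n!,\Bbbk))$. Standard $p$-dg Morita theory then yields a triangulated equivalence
\[
P\otimes^L_{\End(P)^{op}}(-)\colon \cD\bigl(\End(P)^{op}\bigr)\xrightarrow{\ \sim\ } \cD\bigl(\bV(\und{\omega}_n^a)\otimes \mat(n!,\Bbbk)\bigr).
\]
Thus the proposition reduces to identifying the $p$-dg endomorphism algebra $\End(P)^{op}$ with $\bV(\und{\omega}_n^a)\otimes \Bbbk$, and to checking that the bimodule structure on $P$ coming from this identification coincides with the one used to define the functor in the statement.

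The key computation is the identification $\End(P)^{op}\cong \bV(\und{\omega}_n^a)\otimes \Bbbk$ as $p$-dg algebras, which I would handle by exploiting the tensor decomposition of $P$. On the matrix factor, the classical argument gives $E_{n!,n!}\mat(n!,\Bbbk)E_{n!,n!}\cong \Bbbk$, and a direct inspection shows that the induced $p$-differential on this corner is trivial, since the $p$-differential on $\mat(n!,\Bbbk)$ is inner (commutator with a fixed block-Jordan matrix) and any such commutator kills the $(n!,n!)$-entry of a matrix supported only at that corner. On the exterior factor, $\bV(\und{\omega}_n^a)$ is its own endomorphism algebra as a left module over itself and carries the trivial differential by~\autoref{cor:symextaction}. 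Tensoring these two pieces yields the desired identification.

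The main obstacle will be tracking the differentials and the super-sign conventions carefully through the endomorphism computation, especially verifying that the isomorphism $\End(P)^{op}\cong \bV(\und{\omega}_n^a)\otimes \Bbbk$ is one of $p$-dg algebras and not merely of ungraded algebras. Once this is secured, the equivalence of the statement is exactly the Morita equivalence induced by $P$ viewed as a $\bigl(\bV(\und{\omega}_n^a)\otimes \mat(n!,\Bbbk),\ \bV(\und{\omega}_n^a)\otimes \Bbbk\bigr)$-bimodule, which is the functor $(\bV(\und{\omega}_n^a)\otimes \mat(n!,\Bbbk)E_{n!,n!})\otimes(-)$ appearing in the claim, completing the proof.
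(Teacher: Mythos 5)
Your proposal is correct and follows essentially the same route as the paper: invoke the $p$-dg Morita theorem from Khovanov--Qi for a compact cofibrant generator (the paper cites their Proposition 2.34, you cite the neighbouring Corollary 2.45), feed in \autoref{prop:compcof} and \autoref{prop:compcogen}, and identify the $p$-dg endomorphism algebra of $\bV(\und{\omega}_n^a)\otimes \mat(n!,\Bbbk)E_{n!,n!}$ with $\bV(\und{\omega}_n^a)\otimes\Bbbk$ via the tensor decomposition. The extra detail you give on why the induced differential on the corner $E_{n!,n!}\mat(n!,\Bbbk)E_{n!,n!}$ is trivial is a welcome elaboration of a step the paper leaves implicit, but it does not change the argument.
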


\begin{proof}
	This follows from~\cite{KhQi}[Proposition $2.34$]. Indeed the $p$-dg $\bV(\und{\omega}_n^a)\otimes \mat(n!,\Bbbk)$-module $\bV(\und{\omega}_n^a)\otimes \mat(n!,\Bbbk)E_{n!,n!}$ is a compact cofibrant generator of $\cD(\bV(\und{\omega}_n^a)\otimes \mat(n!,\Bbbk))$, as shown in~\autoref{prop:compcof} and~\autoref{prop:compcogen}.\\
	Moreover, observe that 	as $p$-dg algebras
	\begin{align*}
	\End_{\Lambda(\und{\omega}_n^a)\otimes \mat(n!,\Bbbk)}(\bV(\und{\omega}_n^a)\otimes \mat(n!,\Bbbk)E_{n!,n!})& \cong \bV(\und{\omega}_n^a)\otimes \End_{\mat(n!,\Bbbk)}(\mat(n!,\Bbbk)E_{n!,n!})\\
	&\cong \bV(\und{\omega}_n^a)\otimes \Bbbk . \qedhere
	\end{align*}
\end{proof}

\begin{prop}
	The Grothendieck group
	\begin{equation}
		\mathbf{K}_0(\bV(\und{\omega}_n^a)\otimes \Bbbk)\cong \mathbb{O}_p .
	\end{equation}
\end{prop}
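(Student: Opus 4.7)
The plan is to compute the Grothendieck group directly, exploiting the fact that the $p$-differential on $B \coloneq \bV(\und{\omega}_n^a)\otimes\Bbbk$ is trivial, which follows from~\autoref{cor:symextaction}.

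First I would observe that $B$ is a finite-dimensional $\bZ^2$-graded local $p$-dg algebra with zero differential. Consequently the compact derived category $\cD^c(B)$ coincides with the thick subcategory of $\cD(B)$ generated by the free rank-one module $B$, and every compact cofibrant object is (up to homotopy) a finite iterated extension of grading-shifted copies of $B$. In particular, $[B]$ generates $\mathbf{K}_0(B)$ as a $\bZ[q^{\pm 1},\lambda^{\pm 1}]$-module.

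Next, the main relation on this generator arises from the standard acyclic $p$-complex in the $q$-direction, namely $B \xrightarrow{1} B\{2\} \xrightarrow{1} \dotsb \xrightarrow{1} B\{2(p-1)\}$, which is contractible as a $p$-dg module and imposes $(1+q^2+\dotsb+q^{2(p-1)})[B]=0$ (compare the analogous argument in~\cite[Lemma~3.22]{KhQi}). This reduces the coefficient ring in the $q$-direction from $\bZ[q^{\pm 1}]$ to $\mathbb{O}_p$. I would then conclude by identifying $\mathbf{K}_0(B)$ with $\mathbb{O}_p$ as free rank-one $\mathbb{O}_p$-modules, via the natural map sending $[B]$ to $1\in\mathbb{O}_p$.

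The main obstacle is the $\lambda$-grading: a priori the above analysis produces $\mathbb{O}_p[\lambda^{\pm 1}]$ rather than $\mathbb{O}_p$, so one must verify that the $\lambda$-shifts of $[B]$ do not contribute independent generators. I expect this to follow by using the $\bZ^2$-graded degrees $\deg(\omega_n^j)=(2j-2n,2)$ of the exterior generators together with the short exact sequences $0 \to \omega_n^j B \to B \to B/\omega_n^j B \to 0$ coming from multiplication by each $\omega_n^j$, which tie $\lambda$-grading shifts to explicit polynomial expressions in $q$ and should collapse the $\lambda^{\pm 1}$ freedom down to the expected $\mathbb{O}_p$.
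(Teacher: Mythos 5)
Your first two steps are sound and reproduce the standard argument: $B=\bV(\und{\omega}_n^a)\otimes\Bbbk$ is graded local with zero differential, the class $[B]$ of the free rank-one module generates, and the contractible module $B\otimes H$ (which is what your $p$-step complex of shifted copies of $B$ amounts to) imposes $(1+q^2+\dotsb+q^{2(p-1)})[B]=0$. The paper obtains all of this in one stroke by invoking the discussion preceding Corollary 2.18 of Elias--Qi for positively graded $p$-dg algebras with a single idempotent, remarking only that the non-positive grading is harmless. Be aware, though, that your argument as written only bounds $\mathbf{K}_0$ from above: it shows $\mathbf{K}_0$ is a quotient of $\mathbb{O}_p[\lambda^{\pm1}]$. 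The cited Elias--Qi argument is also what supplies the lower bound, namely that compact objects have no exotic direct summands beyond those detected by idempotents of $B$ and that no relations beyond the cyclotomic one occur; you should either cite that result or prove freeness separately, since locality of $B$ alone does not immediately rule out idempotents in the derived category.

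The genuine gap is in your last paragraph. The short exact sequences $0\to\omega_n^jB\to B\to B/\omega_n^jB\to 0$ cannot be used to collapse the $\lambda$-shifts: $\omega_n^jB\cong B/(\omega_n^j)$ is an exterior algebra on fewer generators, which is not a compact object of $\cD(B)$ (over an exterior algebra such quotients have infinite free resolutions), so these sequences yield no relations in $\mathbf{K}_0(\cD^c(B))$. Even formally they would express $[B]$ in terms of a new class $[B/(\omega_n^j)]$ rather than identify a $\lambda$-shift of $[B]$ with a $q$-polynomial multiple of $[B]$. In fact no such collapse occurs: the $\lambda$-shifts of $[B]$ remain independent, and the content of the proposition is that $\mathbf{K}_0$ is free of rank one, with the $\lambda$-shift acting freely --- this is consistent with the paper's later formulas, where $\lambda$ appears explicitly in $[\cE(A_n)]$ and in the identification of $\mathbf{K}_0(A)$ with the baby Verma module, a free $\bC[\lambda^{\pm1}]$-module of rank $p$. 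So the correct repair is not to eliminate $\lambda$ but to drop the expectation that it must be eliminated; with that, and with the Elias--Qi input for the lower bound, your argument reduces to the paper's.
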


\begin{proof}
Note that $\bV(\und{\omega}_n^a)\otimes \Bbbk$ is nonzero only in non-positive $q$-degrees, with $\Bbbk$ in $q$-degree zero. 
 In particular, this means that it has only one idempotent, and thus its derived category has a unique compact cofibrant generator (up to isomorphism and grading shift).
This generator is given by the unique finite cell module, which, in this case, coincides with the unique projective indecomposable module. 
This means that $\bV(\und{\omega}_n^a)\otimes \Bbbk$ satisfies the conditions where the the arguments given in the discussion previous to \cite[Corollary $2.18$]{EliasQi1} apply (the fact that $\bV(\und{\omega}_n^a)\otimes \Bbbk$ is negatively graded is innocuous). 
It follows that 
	\[
		\mathbf{K}_0(\bV(\und{\omega}_n^a))\cong \mathbb{O}_p 
	\]
and, since $\bV(\und{\omega}_n^a)\otimes \Bbbk$ is quasi-isomorphic to $\bV(\und{\omega}_n^a)$, this completes the proof.
\end{proof}

\begin{cor} \label{cor:Kgroup}
	For $0\leq n < p$ the Grothendieck group of $A_n$ is one dimensional 
	\[
	\mathbf{K}_0(A_n)\cong \mathbb{O}_p.
	\]
\end{cor}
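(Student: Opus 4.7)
The plan is essentially to chain together the three results established immediately before this corollary, since they assemble into the desired conclusion. First I would invoke the equivalence~\eqref{eq:iso1}, pulling back along the quasi-isomorphism $\pi_n\colon A_n\to \bV(\und{\omega}_n^a)\otimes \mat(n!,\Bbbk)$, to identify $\cD(A_n)$ with $\cD(\bV(\und{\omega}_n^a)\otimes \mat(n!,\Bbbk))$. This step uses the hypothesis $n<p$ only indirectly, through the fact that the induced differential on $\mat(n!,\Bbbk)$ has all Jordan blocks of size at most $p$.

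Next I would apply the equivalence~\eqref{eq:iso2} to identify $\cD(\bV(\und{\omega}_n^a)\otimes\mat(n!,\Bbbk))$ with $\cD(\bV(\und{\omega}_n^a)\otimes \Bbbk)$. It is here that the constraint $0\leq n<p$ is crucial: it is what guarantees (via \autoref{prop:compcof} and \autoref{prop:compcogen}) that $\bV(\und{\omega}_n^a)\otimes \mat(n!,\Bbbk)E_{n!,n!}$ is a compact cofibrant generator of $\cD(\bV(\und{\omega}_n^a)\otimes\mat(n!,\Bbbk))$ whose endomorphism $p$-dg algebra is quasi-isomorphic to $\bV(\und{\omega}_n^a)\otimes \Bbbk$.

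Composing these two equivalences yields a triangulated equivalence
\[
\cD(A_n)\;\simeq\;\cD(\bV(\und{\omega}_n^a)\otimes \Bbbk).
\]
Since equivalences of triangulated categories induce isomorphisms of the associated Grothendieck groups, I can transport the isomorphism $\mathbf{K}_0(\bV(\und{\omega}_n^a)\otimes \Bbbk)\cong \mathbb{O}_p$, established in the preceding proposition, across this equivalence to obtain $\mathbf{K}_0(A_n)\cong \mathbb{O}_p$. There is no genuine obstacle here beyond bookkeeping: every nontrivial ingredient (compactness and cofibrancy of the generator, the quasi-isomorphism $\pi_n$, and the computation $\mathbf{K}_0(\bV(\und{\omega}_n^a)\otimes \Bbbk)\cong \mathbb{O}_p$) has already been done in the previous propositions, and the corollary amounts to concatenating them.
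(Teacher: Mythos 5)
Your proposal is correct and is essentially identical to the paper's own proof: both compose the equivalences~\eqref{eq:iso1} and~\eqref{eq:iso2} to get $\cD(A_n)\simeq\cD(\bV(\und{\omega}_n^a)\otimes\Bbbk)$ and then transport the computation $\mathbf{K}_0(\bV(\und{\omega}_n^a)\otimes\Bbbk)\cong\mathbb{O}_p$ across the induced isomorphism of Grothendieck groups. Your extra remarks tracking where the hypothesis $n<p$ is used are accurate and harmless.
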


\begin{proof}
	From the equivalences of triangulated categories~\eqref{eq:iso1} and~\eqref{eq:iso2} it follows that there exists an equivalence
	\begin{equation*}
		\cD(A_n) \rightarrow \cD(\bV(\und{\omega}_n^a)\otimes \Bbbk).
	\end{equation*}
	This implies that
	\begin{equation*}
		\mathbf{K}_0(A_n)\cong\mathbf{K}_0(\bV(\und{\omega}_n^a)\otimes \Bbbk) \cong \mathbb{O}_p. \qedhere
	\end{equation*}
\end{proof}

\subsection{A categorical action}

The inclusion of $p$-dg algebras $\phi_{n-1}$ defined in~\autoref{prop:inclusion} turns $A_n$ into a $p$-dg module over $A_{n-1}$.

Recall the morphism $\phi_{n-1}':R_{n-1}\rightarrow R_n$ from~\autoref{thm:pdgphi}.
\begin{prop} \label{prop:R_nmod}
Morphism $\phi_{n-1}'$ makes $R_n$ a $R_{n-1}$-module with basis $\{x_n^{a}\omega_n^{b}\ |\ a \in \bN,\ b\in \{0,1\} \}$. Since $\phi_n'$ is a $p$-dg morphism, $R_n$ is a $p$-dg $R_{n-1}$-module.
\end{prop}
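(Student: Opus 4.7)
The plan is to show $R_n$ is free as a left $R_{n-1}$-module (via the action through $\phi'_{n-1}$) with the stated basis by producing an explicit unitriangular change of basis. First I observe that since $\phi'_{n-1}(x_i)=x_i$ for $i<n$, the polynomial subring $\Pol_n$ is free over $\Pol_{n-1}$ with basis $\{x_n^a : a\in\bN\}$, and $R_n = \Pol_n\otimes \bV(\omega_1,\ldots,\omega_n)$ is free over $\Pol_n$ with basis $\{\omega_I : I\subseteq\{1,\ldots,n\}\}$. Concatenating these two facts gives the ``standard'' $\Pol_{n-1}$-basis $\{x_n^a\omega_I\}$ of $R_n$. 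To prove the proposition it then suffices to show that
\[
\mathcal{B} := \{ x_n^a\, \omega_n^b\cdot \phi'_{n-1}(\omega_J) : a\in\bN,\ b\in\{0,1\},\ J\subseteq\{1,\ldots,n-1\}\}
\]
is also a $\Pol_{n-1}$-basis of $R_n$, since this is exactly the statement that $\{x_n^a\omega_n^b\}$ is a free $R_{n-1}$-basis.

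The key computation concerns the shape of $\phi'_{n-1}(\omega_J)$. Writing $\phi'_{n-1}(\omega_j) = \omega_j + \omega_n\, p_j$ with $p_j\in\Pol_n$, and using $\omega_n^2=0$, I would check by direct expansion that for any $J=\{j_1<\cdots<j_k\}\subseteq\{1,\ldots,n-1\}$ one has
\[
\phi'_{n-1}(\omega_J) = \omega_J + \sum_{n\in K} c_K\,\omega_K, \qquad \omega_n\cdot\phi'_{n-1}(\omega_J) = \omega_n\,\omega_J,
\]
with coefficients $c_K\in\Pol_n$ indexed by subsets $K\subseteq\{1,\ldots,n\}$ containing $n$. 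The second identity holds because in $\phi'_{n-1}(\omega_{j_1})\cdots\phi'_{n-1}(\omega_{j_k})$ every term containing an $\omega_n$-factor is killed after multiplication by $\omega_n$. These two identities say that, with respect to the two-step $\omega_n$-degree filtration of $R_n$, the transition matrix from the standard $\Pol_n$-basis $\{\omega_J\}\cup\{\omega_n\omega_J\}$ (indexed by $J\subseteq\{1,\ldots,n-1\}$) to the family $\{\phi'_{n-1}(\omega_J)\}\cup\{\omega_n\phi'_{n-1}(\omega_J)\}$ is unitriangular, hence invertible over $\Pol_n$. Tensoring with the $\Pol_{n-1}$-basis $\{x_n^a\}$ of $\Pol_n$ yields that $\mathcal{B}$ is a $\Pol_{n-1}$-basis of $R_n$.

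The main (but mild) obstacle is verifying the second identity for multi-factor products $\phi'_{n-1}(\omega_{j_1})\cdots\phi'_{n-1}(\omega_{j_k})$; this is a short expansion that reduces to $\omega_n^2=0$ annihilating every correction term. Finally, the $p$-dg assertion is immediate: since $\phi'_{n-1}$ is a $p$-dg morphism by~\autoref{thm:pdgphi}, for any $r\in R_{n-1}$ and $m\in R_n$ the Leibniz rule for $\dif_n$ on $R_n$ gives
\[
\dif_n\bigl(\phi'_{n-1}(r)\cdot m\bigr) = \phi'_{n-1}\bigl(\dif_{n-1}(r)\bigr)\cdot m + \phi'_{n-1}(r)\cdot\dif_n(m),
\]
which is exactly the compatibility making $R_n$ a $p$-dg module over $R_{n-1}$.
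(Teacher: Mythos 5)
Your overall strategy (reduce the claim to showing that $\{x_n^a\omega_n^b\,\phi'_{n-1}(\omega_J)\}$ is a $\Pol_{n-1}$-basis of $R_n$, and prove that by a triangular change of basis) is reasonable and close in spirit to the paper's leading-term argument, and your handling of the $p$-dg assertion is correct. However, the ``key computation'' on which your triangularity rests is false. By \autoref{thm:pdgphi} (with $n$ replaced by $n-1$),
\[
\phi'_{n-1}(\omega_i)=\omega_i-\sum_{l=i+1}^{n}x_n\prod_{j=i+1}^{l-1}(x_j-x_n)\,\omega_l ,
\]
so the correction terms involve \emph{all} of $\omega_{i+1},\dotsc,\omega_n$, and each carries a factor of $x_n$ --- not a factor of $\omega_n$. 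Hence $\phi'_{n-1}(\omega_j)$ is not of the form $\omega_j+\omega_n p_j$ once $j<n-1$: already for $n=3$ one has $\phi'_{2}(\omega_1)=\omega_1-x_3\omega_2-x_3(x_2-x_3)\omega_3$, whose term $-x_3\omega_2$ contains no $\omega_3$. Consequently both of your displayed identities fail (for instance $\omega_3\,\phi'_{2}(\omega_1)=\omega_3\omega_1-x_3\,\omega_3\omega_2\neq\omega_3\omega_1$), and the transition matrix is \emph{not} unitriangular for the two-step $\omega_n$-degree filtration: modulo $\omega_n$ the map $\omega_J\mapsto\phi'_{n-1}(\omega_J)$ is not the identity, so invertibility is not established by the argument as written.

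The gap is repairable because the correct structural fact is the one the paper exploits: every correction term of $\phi'_{n-1}$ is divisible by $x_n$. Writing elements of $R_n$ as $\sum m_{a,b}\,x_n^a\omega_n^b$ with $m_{a,b}\in R_{n-1}$ (canonically included) and grading monomials by the pair $(a,b)=(\deg_{x_n},\deg_{\omega_n})$ ordered lexicographically, one gets $\phi'_{n-1}(f)\cdot x_n^{a}\omega_n^{b}=f\,x_n^{a}\omega_n^{b}+(\text{terms with strictly larger }x_n\text{-exponent})$, and a leading-term argument gives linear independence; generation is obtained by downward induction on the index of $\omega_i$, filtering by the subalgebras $\Pol_{n-1}\otimes\bV(\omega_{i+1},\dotsc,\omega_{n-1})$. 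Alternatively, your transition-matrix formulation survives if you replace the $\omega_n$-filtration by the lexicographic order on index sets (every correction $\omega_K$ of $\phi'_{n-1}(\omega_J)$ has each index of $J$ weakly increased and at least one strictly increased, so $K>J$); but the two identities you state are not correct and need to be replaced by this weaker, order-theoretic statement before the unitriangularity claim is justified.
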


\begin{proof}
We start by showing that the set $\{x_n^{a}\omega_n^{b}\ |\ a \in \bN,\ b\in \{0,1\} \}$ generates the $R_{n-1}$-module $R_n$. The action defined by $\phi_n'$ is just the usual multiplication for the elements of the polynomial ring contained in $R_{n-1}$, thus the polynomial part is clearly generated by the elements of the set proposed as basis. we now show that the elements $\omega_1,\dotsc , \omega_n$ are generated as well. For $i=0,\dotsc, n-1$ we define the subalgebras $R_{n-1}^{>i}\coloneq\Pol_{n-1} \otimes \bV(\omega_{i+1},\dotsc, \omega_{n-1})$ of $R_{n-1}$. We have inclusions
	\[
		R_{n-1}^{>n-1}=\Pol_{n-1} \subset R_{n-1}^{>n-2} \subset \cdots \subset R_{n-1}^{>0}=R_{n-1}.
	\]
	It is easy to show the last equality of the following formula using induction
	\begin{align*}
		\omega_i\cdot 1=\omega_i-x_n\sum_{j=i+1}^n\prod_{s=i+1}^{j-1}(x_s-x_n)\omega_j=\omega_i+\substack{\text{elements generated} \\ \text{by the action of $R_{n-1}^{>i}$}.}
	\end{align*}
	As a consequence, using the action of $R_{n-1}^{>i}$ we can obtain $\omega_i$:
	\[
		\omega_i=\omega_i\cdot 1+\substack{\text{elements generated} \\ \text{by the action of $R_{n-1}^{>i}$}.}
	\]
	Since all the generators of $R_n$ can be obtained using the action of $R_{n-1}$ on the set $\{x_n^{a}\omega_n^{b}\ |\ a \in \bN,\ b\in \{0,1\} \}$, it is easy to observe that all the elements can be generated as well.\\
	We now prove that the set $\{x_n^{a}\omega_n^{b}\ |\ a \in \bN,\ b\in \{0,1\} \}$ is linearly independent, that is, it is a basis. Let $r$ be a strictly positive integer, $f_1,\dotsc f_r \in R_{n-1}$ and $a_i\in \bN$, $b_i \in \{0,1\}$ for $1\leq i \leq r$, and suppose that
	\begin{align*}
		f_1\cdot x_n^{a_1}\omega_n^{b_1}+f_2\cdot x_n^{a_2}\omega_n^{b_2}+\cdots + f_r\cdot x_n^{a_r}\omega_n^{b_r}=0 .
	\end{align*}
	We will prove that $f_1=\cdots=f_r=0$.
	Consider the set of pairs $\{(a_s,b_s)\ | \ s=1,\dotsc,r\}$, which can be ordered using the lexicographic order. Without loss of generality we can suppose that $(a_1,b_1)<(a_2,b_2)<\cdots <(a_r,b_r)$. Now, consider a monomial $mx_n^{a}\omega_n^b$, with $m\in R_{n-1}$, if we define its degree to be $(a,b)$ then we observe that
	\[
		f_i\cdot x_n^{a_i}\omega_n^{b_i}=f_i x_n^{a_i}\omega_n^{b_i}+ \text{Elements of higher degree.}
	\]
	It is clear that if $f_1\neq 0$ then $f_1 x_n^{a_1}\omega_n^{b_1}$ can not be eliminated by the other terms of the sum $f_1\cdot x_n^{a_1}\omega_n^{b_1}+\cdots + f_r\cdot x_n^{a_r}\omega_n^{b_r}$. Extending this reasoning to all the $f_i$ we conclude that $f_1=\cdots =f_r=0$.
\end{proof} 

Recall the morphism $\phi_{n-1}:A_{n-1}\rightarrow A_n$ from~\autoref{prop:inclusion}.
\begin{prop}
Morphism $\phi_{n-1}$  makes $A_n$ an $A_{n-1}$-module with basis $\{x_n^{a}\omega_n^{b}T_{s_{n-1}s_{n-2}\cdots s_c}\ |\ a \in \bN,\ b\in \{0,1\}, c\in\{1, 2,\dotsc, n-1\} \}$, with $T_{s_{n-1}s_n}=1$ by convention. Since $\phi_{n-1}$ is a $p$-dg morphism, $A_n$ is a $p$-dg $A_{n-1}$-module.
\end{prop}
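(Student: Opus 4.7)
The plan will closely parallel the argument for~\autoref{prop:R_nmod}, with the additional complication that $A_n$ contains the nilHecke generator $T_{n-1}$, which is not in the image of $\phi_{n-1}(A_{n-1})$, and one must combine the spanning/independence story for $R_n$ over $R_{n-1}$ with the analogous story for $\nh_n$ over $\nh_{n-1}$.

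First I would establish spanning by reducing an arbitrary monomial in the generators $x_i$, $T_j$, $\omega_k$ of $A_n$ to the claimed normal form. The purely $R_n$-part of such a monomial is handled by the preceding~\autoref{prop:R_nmod}, which writes it as an $R_{n-1}$-combination of $\{x_n^a\omega_n^b\}$. For the purely nilHecke part, I would appeal to the standard PBW-type fact that $\nh_n$ is a free left $\nh_{n-1}$-module with basis $\{x_n^a T_{s_{n-1}\cdots s_c}\ |\ a\in\bN,\ c\in\{1,\dotsc,n\}\}$, which is proved by normal-ordering $T$-words using the braid relation $T_{n-1}T_{n-2}T_{n-1}=T_{n-2}T_{n-1}T_{n-2}$ and the commutations $T_jT_{n-1}=T_{n-1}T_j$ for $j\leq n-3$, then sliding $x_n$ to the right through $T_{n-1}$ via the relations in~\eqref{eq:nh3}. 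Combining the two pieces requires moving $\omega_n^b$ past $T_{s_{n-1}\cdots s_c}$: the commutation $T_{n-1}\omega_j=\omega_j T_{n-1}$ holds for $j\neq n-1,n$ by~\eqref{eq:nhs2}, and the remaining case (involving $\omega_{n-1}$) is governed by the twisted relation~\eqref{eq:nhs3}, which, combined with the explicit form of $\phi_{n-1}(\omega_{n-1})$, produces only correction terms already in the $A_{n-1}$-span of the proposed basis.

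Next I would prove linear independence by an ordering argument analogous to the one in~\autoref{prop:R_nmod}. Suppose a nontrivial relation
\[
\sum_{a,b,c} f_{a,b,c}\cdot x_n^a\omega_n^b T_{s_{n-1}\cdots s_c}=0,\qquad f_{a,b,c}\in A_{n-1},
\]
holds, and order the triples $(a,b,c)$ lexicographically. Using the decomposition~\eqref{eq:AnNhdecomposition} (and its analogue for $A_{n-1}$) together with the already-recalled freeness of $\nh_n$ over $\nh_{n-1}$ and~\autoref{prop:R_nmod}, each monomial $f_{a,b,c}\cdot x_n^a\omega_n^b T_{s_{n-1}\cdots s_c}$ contributes, upon expansion in the $\Bbbk$-basis of $A_n$ provided by~\eqref{eq:Andecomposition}, a unique leading term of tri-degree $(a,b,c)$ that cannot be cancelled by strictly larger triples. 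This forces the smallest $f_{a_0,b_0,c_0}$ to vanish, and induction finishes the argument. Finally, since~\autoref{prop:inclusion} already establishes that $\phi_{n-1}\colon(A_{n-1},\dif)\to(A_n,\dif)$ is a $\dif$-equivariant homomorphism of $p$-dg algebras, the Leibniz rule for $\dif$ on $A_n$ combined with $\dif\circ\phi_{n-1}=\phi_{n-1}\circ\dif$ immediately gives the $p$-dg module axiom $\dif(\phi_{n-1}(a)\,m)=\phi_{n-1}(\dif(a))\,m+\phi_{n-1}(a)\,\dif(m)$.

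The main obstacle will be the bookkeeping in the spanning step: one must verify that the rewriting rules bringing a monomial into the claimed normal form are confluent and terminate. Each application of $T_{n-1}x_n=x_{n-1}T_{n-1}-1$ or of~\eqref{eq:nhs3} generates lower-order correction terms, so convergence requires induction on a suitable complexity measure on monomials (for instance, polynomial degree weighted by the number of occurrences of $T_{n-1}$ and $\omega_n$). Once the $T$'s have been pushed into staircase form, the $\omega$-moves are controlled by~\autoref{prop:R_nmod}, so the remaining work is essentially a careful but routine verification.
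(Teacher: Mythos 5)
Your proposal is correct and follows essentially the same route as the paper, which simply combines \autoref{prop:R_nmod} with the right coset decomposition $\Sy_n=\bigsqcup_{c=1}^{n}\Sy_{n-1}s_{n-1}\cdots s_c$ (equivalently, the PBW-type freeness of $\nh_n$ over $\nh_{n-1}$ with basis $\{x_n^aT_{s_{n-1}\cdots s_c}\}$) and the $\dif$-equivariance of $\phi_{n-1}$ from \autoref{prop:inclusion}. The paper states this in two lines; your spelled-out spanning and lexicographic-independence arguments are just the details behind that citation.
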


\begin{proof}
The statement is a consequence of~\autoref{prop:R_nmod} together with the fact that there exists an isomorphism of algebras $A_n\cong R_n\otimes \nc_n$, by using the right coset decomposition of $\Sy_n$, i.e.
	\[
		\Sy_n=\bigsqcup_{c=1}^{n}\Sy_{n-1}s_{n-1}\cdots s_c,
	\]
    with the convention that $\Sy_{n-1}s_{n-1}s_n=\Sy_{n-1}$.
\end{proof}

For a $\bZ/2\bZ\times\bZ^2$-graded module $M$ we use the notation $q^a\lambda^b\Pi M $, where its $q$-degree is shifted up by $a$, its $\lambda$-degree is shifted up by $b$, and its parity is switched.

\vspace{0.1in}

Consider the $(A_n,A_{n-1})$-bimodule $A_n$, where the right action of $A_{n-1}$ is induced by the morphism $\phi_{n-1}$.
\begin{defn}
 We define the functor
	\begin{align*}
		\cF_n^{n+1}\coloneq A_{n+1}\otimes_{\phi_{n}}(-):\cD(A_n)&\longrightarrow \cD (A_{n+1}),\\
		M&\mapsto  A_n\otimes_{\phi_{n}}M.
	\end{align*}
\end{defn}
	Consider the $(A_{n-1},A_n)$-bimodule $A_n^t$ where the left action of $A_{n-1}$ is defined by the morphism $\phi_{n-1}$ and the differential acts on the generator $1$ as $\dif(1)=(n-1)x_n1$.
\begin{defn}
We define the functor
	\begin{align} \label{def:E}
		\cE_{n}^{n-1}\coloneq q^{2n-1}\lambda ^{-1}A_n^t\otimes (-):\cD(A_n)&\longrightarrow \cD (A_{n-1}),\\
		M&\mapsto -q^{2n-1}\lambda ^{-1} A_n^t\otimes_{\phi_{n-1}}M.\nonumber
	\end{align}
\end{defn}

Recall that $\cD(A_n)=0$ when $n\geq p$.
\begin{defn}
We define
\begin{gather*}
  \cD(A) \coloneq\bigoplus_{n=0}^{+\infty}\cD(A_n)=\bigoplus_{n=0}^{p-1}\cD(A_n)  ,
\\
	\cF\coloneq \bigoplus_{n=0}^{+\infty}\cF_n , \mspace{60mu}  \cE\coloneq \bigoplus_{n=0}^{+\infty}\cE_n ,
\end{gather*}
and
\[
(A) \coloneq \bigoplus_{n=0}^{p-1}\mathbf{K}_0(A_n)\cong \bigoplus_{n=0}^{p-1}\mathbb{O}_p[A_n],
\]
where the symbols $[A_n]$ form a basis of the $\mathbb{O}_p$ vector space $\mathbf{K}_0(A)$.
\end{defn}

\begin{prop} \label{weakcat}
	In the Grothendieck group $\mathbf{K}_0(A)$ we have
	\begin{align*}
		[\cF(A_n)] &=\begin{cases}
			[A_{n+1}], &\text{ if }\  0\leq n < p-1,\\
			0, &\text{ if }\   n = p-1.
		\end{cases}
  \\[1ex]
		[\cE(A_n)] &=
		\begin{cases}
			0, &\text{ if }\  n = 0,\\
			[n](\lambda q^{-n}-\lambda ^{-1} q^{n}) &\text{ if }\  0< n \leq p-1.
		\end{cases}
	\end{align*}
\end{prop}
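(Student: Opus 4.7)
The plan breaks into two parts.

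For $\cF$ the computation is immediate from the definition: since $A_n$ acts on itself via the identity, associativity of tensor products gives
\[
\cF(A_n) = A_{n+1}\otimes_{\phi_n, A_n} A_n \cong A_{n+1}
\]
as a left $p$-dg $A_{n+1}$-module. Hence $[\cF(A_n)] = [A_{n+1}]$ in $\mathbf{K}_0(A_{n+1})$ whenever $n\leq p-2$, while $[A_p]=0$ since $A_p$ is acyclic by the proposition guaranteeing that $A_n$ is acyclic for $n\geq p$.

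For $\cE$ the case $n=0$ is handled by convention, so suppose $n\geq 1$. By definition $\cE(A_n) = -q^{2n-1}\lambda^{-1}\, A_n^t \otimes_{A_n} A_n = -q^{2n-1}\lambda^{-1}\, A_n$, viewed as a left $p$-dg $A_{n-1}$-module via $\phi_{n-1}$. The task is thus to express the class of $A_n$ in $\mathbf{K}_0(A_{n-1})$ as a multiple of $[A_{n-1}]$. The starting point is the basis
\[
\{\,x_n^a\,\omega_n^b\,T_{s_{n-1}s_{n-2}\cdots s_c} \, : \, a\in\bN,\ b\in\{0,1\},\ c\in\{1,\dotsc,n\}\,\}
\]
of $A_n$ as a left $A_{n-1}$-module (with $T_{s_{n-1}s_n}=1$ by convention), obtained right after~\autoref{prop:R_nmod} via the coset decomposition of $\Sy_n$.

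I would organize the computation in three contributions. First, the element $x_n$ commutes with $\phi_{n-1}(A_{n-1})$ in $A_n$, so $A_n$ is actually a module over $A_{n-1}\otimes \Bbbk[x_n]$, free on the \emph{finite} basis $\{\omega_n^b T_{s_{n-1}\cdots s_c}\}$. Since $\Bbbk[x_n]$ with $\dif(x_n)=x_n^2$ is quasi-isomorphic to $\Bbbk$, we have $A_{n-1}\otimes \Bbbk[x_n]\simeq A_{n-1}$ in the compact derived category, which collapses the polynomial factor to $1$ in the Grothendieck group. Second, the odd generator $\omega_n$, of bidegree $(-2n,2)$, contributes the factor $1-q^{-2n}\lambda^2$, the minus sign coming from the parity switch $\Pi$. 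Third, the minimal-length coset representatives $T_{s_{n-1}\cdots s_c}$ for $c=1,\dotsc,n$ contribute $\sum_{c=1}^n q^{-2(n-c)} = q^{-(n-1)}[n]$. Combining these and multiplying by $-q^{2n-1}\lambda^{-1}$ yields
\[
[\cE(A_n)] = -q^{2n-1}\lambda^{-1}\cdot q^{-(n-1)}[n]\,(1-q^{-2n}\lambda^2)\,[A_{n-1}] = [n](\lambda q^{-n} - \lambda^{-1}q^n)[A_{n-1}],
\]
where the last step uses $q^n[n]=q+q^3+\dotsb+q^{2n-1}$ and $q^{-n}[n]=q^{-1}+q^{-3}+\dotsb+q^{1-2n}$.

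The main obstacle will be to justify rigorously the simplification $A_{n-1}\otimes\Bbbk[x_n]\simeq A_{n-1}$ at the level of $p$-dg modules in a way compatible with the left $A_{n-1}$-action on $A_n$, so that it descends to a genuine identity between Grothendieck classes and not just between formal graded dimensions. This should follow from the equivalences developed in~\autoref{sec:GgroupAn} combined with a compatibility statement for the projection $\pi_n$ and the inclusion $\phi_{n-1}$; the analogous reduction for $\nh_n$ inside $\nh_{n+1}$ in the Khovanov--Qi setting can serve as a model.
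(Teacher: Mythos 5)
The computation of $[\cF(A_n)]$ and the case $n=0$ of $\cE$ are fine, and your final arithmetic does reproduce the stated formula. The genuine gap is in the central step for $\cE$: the claim that the polynomial tower $\Bbbk[x_n]$ sitting over each coset representative ``collapses to $1$'' in the Grothendieck group. That would be correct if $A_n^t\otimes_{\phi_{n-1}}A_n$ were isomorphic, as a $p$-dg module, to a direct sum of copies of $A_{n-1}\otimes\Bbbk[x_n]$ carrying the standard differential $\dif(x_n^r)=r\,x_n^{r+1}$. It is not: the differential on $A_n^t$ twists the generator, $\dif(1)=(n-1)x_n\cdot 1$, and $\dif(T_i)$ contributes further multiples of $x_n$ along the diagonal, so that on the subquotient spanned by $\{1\underline{T}x_n^r\}_{r\geq 0}$ over a coset representative $\underline{T}$ with $j-1$ crossings the induced differential is $1\underline{T}x_n^r\mapsto\bigl((n-1)-2(j-1)+r\bigr)1\underline{T}x_n^{r+1}$. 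Such a twisted tower is quasi-isomorphic not to $\Bbbk$ but to a finite-dimensional $p$-complex of graded dimension $1+q^2+\dots+q^{2k}$ with $k=(2j-1-n)\bmod p$; for instance for $n=2$ the tower over $1$ is acyclic while the tower over $T_1$ contributes $q^{-2}+1$, not $q^{-2}$. So the per-tower accounting in your third contribution is wrong, even though the total over all towers happens to coincide with $\sum_c q^{-2(n-c)}$ --- that coincidence is exactly what needs to be proven and does not follow from the claimed collapse.

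The paper's proof supplies the missing argument: it filters the $\omega_n$-free summand $F$ by cosets, splits each subquotient tower into one finite piece $M_j^0$ plus contractible pieces of length $p$, and shows that $[M_j^0]+[M_{n-j}^0]=0$ in $\bO_p$, so that only the tower over the longest representative survives, contributing $\sum_{s=0}^{n-1}q^{2(1-n)+2s}=q^{1-n}[n]$. Your treatment of the $\omega_n$ factor (the sign from $\Pi$ and the factor $1-\lambda^2q^{-2n}$) and the overall normalization $-q^{2n-1}\lambda^{-1}$ do agree with the paper. To repair your argument you must either replace the assertion that each $\Bbbk[x_n]$-tower is quasi-isomorphic to $\Bbbk$ by the filtration-and-cancellation computation above, or prove directly that the sum of the classes of the twisted towers equals $q^{1-n}[n]$.
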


\begin{proof}
	The $p$-dg module $A_n^t\otimes_{\phi_{n-1}}A_n$ is the direct sum of two $p$-dg $A_{n-1}$-modules we denote $F$ and $F^\omega$ for the duration of the proof. 
 Set 
 \[
\underline{T}_i = 
\begin{cases}
T_{n-1}T_{n-2}\dotsm T_{i} & \text{for } i=1,2,\dotsc,n-2 ,
\\
T_{n-1} & \text{for } i=n-1 ,
\\
1 & \text{for } i=n .
\end{cases}
  \]
The modules $F$ and $F^\omega$ are generated respectively by the sets
	\begin{gather*}
		\bigl\{ 1\underline{T}_ix_{n}^r \ |\ r \in \mathbb{N},\ i\in 	\{1,2,\dotsc,n\} \bigr\},
  \\
		\bigl\{ 1\underline{T}_ix_{n}^r\omega_{n} \ |\ r \in \mathbb{N},\ i\in \{1,2,\dotsc,n\} \bigr\},	    
	\end{gather*}

The $p$-dg module $F$ has a filtration
\[
		F_0\coloneq 0\subset F_1 \subset \cdots F_{n-1} \subset F_n, 
\]
where $F_j$ is generated by the set 
\[
\bigl\{ 1\underline{T}_ix_{n}^r \ |\ r \in \mathbb{N},\ i\in 	\{n-(j-1),\dotsc,n\} \bigr\} .
\]
The quotient $F_j/F_{j-1}$ is a $p$-dg $A_{n-1}$-module generated by the classes $\left[1\underline{T}_{j-1}x_{n}^r\right]$, where
\[
\bigl[ \dif (1\underline{T}_{j-1}x_{n}^r )\bigr] = \bigl((n-1)-2(j-1)+r \bigr)\bigl[ 1\underline{T}_{j-1}x_{n}^{r+1}\bigr].
\]
When $r=2(j-1)-(n-1)\bmod p=2j-1-n \bmod p$ we have that 
\[
\bigl[ \dif (1 \underline{T}_{j-1}x_{n}^r )\bigr]=0  ,
\]
and this implies that  $F_j/F_{j-1}$ is the direct sum of the submodules $M_j^k$ generated by the elements of the sets
\[
\bigl\{ \bigl[ 1 \underline{T}_{j-1}x_{n}^r \bigr] | r=0,\dotsc , 2j-1-n\bmod p \bigr\} 
\mspace{40mu} \text{for $k=0$},
\]
and
\[
\bigl\{ \bigl[ 1\underline{T}_{j-1}x_{n}^r \bigr] | r= 2j-1-n+kp+1,\dotsc , 2j-1-n+(k+1)p \bigr\} 
\mspace{40mu}\text{for $k>0$} .
\]
All modules $M_j^{k\geq1}$ are contractible and therefore acyclic. The only module that is not contractible is $M_j^0$ when $2j-1-n \neq p-1 \bmod p$. 
Since $2j-1-n=p+2j-1-n\bmod p$ we can write
\[
\left[M_j^0\right]+\left[M_{n-j}^0\right] =\sum_{s=0}^{p+2j-1-n}q^{2(1-j)+2s}+ \sum_{s=0}^{n-2j-1}q^{2(j+1-n)+2s}=\sum_{s=0}^{p-1}q^s=0.
\]
The last equality follows from the fact that $q$ is a $p$-root of unity and a simple computation of the coefficients. Observe that is $n$ is even and $j=\frac{n}{2}$ then $M_{j}=M_{n-j}=M_{\frac{n}{2}}$, but in this case $2j-1-n \bmod p=p-1\bmod p$ and the module $M_{\frac{n}{2}}$ turns out to be acyclic.

We conclude that 
\[
[F]=\sum_{j=1}^{n}\left[\frac{F_j}{F_{j-1}}\right]=\sum_{j=1}^{n}\left[M_j^0\right]=\left[M_n^0\right]=\sum_{j=0}^{n-1}q^{2(1-n)+2j} ,
\]
where the last equality follows from the definition of $\dif$ on $M_n^0$ and its action on the generators. Similarly we prove that
\[
[F^{\omega}]=-\lambda^{2}q^{-2n}\sum_{j=0}^{n-1}q^{2(1-n)+2j} ,
\]
where the $-\lambda^{-2}q^{2n}$ is a consequence of the degree shift of the generators of $F^{\omega}$ given by the element $\omega_n$. In particular the minus sign is a consequence of $\bZ/2\bZ$-graded structure of the module. 
 To complete the proof we recall the grading shift in the definition of $\cE_{n}$ in~\eqref{def:E} that results in a factor $-q^{2n-1}\lambda^{-1}$.
 \end{proof}

\begin{cor}
Functors $\cF$ and $\cE$ descend to operators on the Grothendieck group of $A$, which is isomorphic to the baby Verma module $M(\lambda q^{-1})$ via the map that sends $[A_r]$ to $(q-q^{-1})m_r$.
\end{cor}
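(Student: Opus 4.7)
The plan is to (i) check that $\cF$ and $\cE$ descend to $\mathbb{O}_p[\lambda^{\pm 1}]$-linear operators on the Grothendieck group $\mathbf{K}_0(A)$, (ii) read off their action on the generators $[A_n]$ from Proposition~\ref{weakcat}, and (iii) compare with the action of $E$, $F$, $K^{\pm 1}$ on $M(\lambda q^{-1})$ under the map $\psi:[A_r]\mapsto(q-q^{-1})m_r$.

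For step (i), observe that $\cF_n^{n+1}=A_{n+1}\otimes_{\phi_n}(-)$ is, up to a grading shift, the extension-of-scalars functor along $\phi_n$, and $\cE_n^{n-1}=q^{2n-1}\lambda^{-1}A_n^t\otimes(-)$ is, up to the same shift, the restriction functor along $\phi_{n-1}$, since $A_n^t\otimes_{A_n}(-)\cong\phi_{n-1}^{*}(-)$. The basis decompositions stated right before the definitions of $\cF$ and $\cE$ show that $A_{n+1}$ is free of finite rank as a right $A_n$-module and that $A_n$ is free of finite rank as a left $A_{n-1}$-module. Consequently both functors are exact and preserve compact objects, so they induce well-defined operators $[\cF],[\cE]$ on $\mathbf{K}_0(A)=\bigoplus_{n=0}^{p-1}\mathbb{O}_p[A_n]$.

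For step (ii), by Corollary~\ref{cor:Kgroup} each summand $\mathbf{K}_0(A_n)\cong\mathbb{O}_p[A_n]$ is free of rank one, so Proposition~\ref{weakcat} completely describes the operators by
\begin{align*}
[\cF][A_n] &= [A_{n+1}]\ \text{for}\ 0\leq n<p-1,\quad [\cF][A_{p-1}] = 0,\\
[\cE][A_n] &= [n](\lambda q^{-n}-\lambda^{-1}q^n)[A_{n-1}]\ \text{for}\ 0<n\leq p-1,\quad [\cE][A_0] = 0.
\end{align*}

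For step (iii), substituting $\lambda\mapsto\lambda q^{-1}$ in the baby Verma formulas yields, on $M(\lambda q^{-1})$, $Km_n=\lambda q^{-1-2n}m_n$, $Fm_n=m_{n+1}$, and $Em_n=[n]\tfrac{\lambda q^{-n}-\lambda^{-1}q^n}{q-q^{-1}}m_{n-1}$. A direct comparison shows that $\psi\circ[\cF]=F\circ\psi$ and $\psi\circ[\cE]=(q-q^{-1})E\circ\psi$; this $(q-q^{-1})$-rescaling on the $E$-side is precisely what the normalization $(q-q^{-1})m_r$ in the definition of $\psi$ is designed to encode. The $K$-action comes from the $(q,\lambda)$-bidegree of $[A_n]$, which yields $K[A_n]=\lambda q^{-1-2n}[A_n]$ in agreement with $Km_n$. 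Since $\psi$ is $\mathbb{O}_p[\lambda^{\pm 1}]$-linear and sends the basis $\{[A_n]\}_{n=0}^{p-1}$ bijectively onto $\{(q-q^{-1})m_n\}_{n=0}^{p-1}$, it is an isomorphism of $\mathbb{O}_p[\lambda^{\pm 1}]$-modules compatible with the (rescaled) $\smslt$-action.

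The main subtlety is the persistent $(q-q^{-1})$-factor on the $\cE$ side, which arises naturally from the grading shifts built into the definition of $\cE_n^{n-1}$; keeping careful track of this scaling is what forces the identification with the baby Verma to take the form $[A_r]\mapsto(q-q^{-1})m_r$ rather than the naive $[A_r]\mapsto m_r$.
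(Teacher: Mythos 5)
Your overall route is the same as the paper's: the corollary is meant to follow immediately from Proposition~\ref{weakcat} together with Corollary~\ref{cor:Kgroup}, by reading off the matrices of $[\cF]$ and $[\cE]$ on the basis $\{[A_n]\}$ and comparing with the baby Verma formulas at $\lambda\mapsto\lambda q^{-1}$. Your verification that $\psi\circ[\cF]=F\circ\psi$ and $\psi\circ[\cE]=(q-q^{-1})\,E\circ\psi$ is correct, as is the identification of the $K$-weight $\lambda q^{-1-2n}$.

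The gap is in how you dispose of the leftover factor $q-q^{-1}$. A map of the form $[A_r]\mapsto c\,m_r$ with $c$ a fixed scalar satisfies exactly the same intertwining identities for every choice of $c$: if $\psi'=c^{-1}\psi$, then $\psi'\circ[\cE]=(q-q^{-1})E\circ\psi'$ still holds. So the normalization $(q-q^{-1})m_r$ cannot ``encode'' or absorb a discrepancy at the level of operators, and your own computation shows that $\psi$ does \emph{not} literally intertwine $[\cE]$ with the $E$ of \autoref{def:smallqg}. (No diagonal rescaling $[A_r]\mapsto c_r m_r$ can fix this either: matching $F$ forces all $c_r$ equal, and the product $[\cE][\cF]$ is invariant under any diagonal rescaling; indeed Proposition~\ref{weakcat} gives $[\cE][\cF]-[\cF][\cE]=K-K^{-1}$ on $\mathbf{K}_0(A)$, not $\tfrac{K-K^{-1}}{q-q^{-1}}$, since $[n+1](\lambda q^{-n-1}-\lambda^{-1}q^{n+1})-[n](\lambda q^{-n}-\lambda^{-1}q^{n})=\lambda q^{-2n-1}-\lambda^{-1}q^{2n+1}$.) What your computation actually proves is that $[\cE]$ realizes $(q-q^{-1})E$; this rescaling is moreover unavoidable over the ground ring, because $q-q^{-1}$ is not a unit in $\mathbb{O}_p$ and the operator $E$, whose matrix entries carry the denominator $q-q^{-1}$, does not preserve the $\mathbb{O}_p[\lambda^{\pm1}]$-lattice spanned by the $m_i$ (nor the one spanned by the $(q-q^{-1})m_i$). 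You should therefore either state explicitly that $\mathbf{K}_0(A)$ is identified with the integral form of $M(\lambda q^{-1})$ on which $F$ and $K^{\pm1}$ act as in the baby Verma module and $(q-q^{-1})E$ replaces $E$, or rescale the operator rather than the basis; attributing the fix to the choice of $\psi$ is incorrect. The remaining steps of your argument (exactness and preservation of compact objects for induction and restriction along $\phi_n$, using the finite free-module decompositions) are fine and consistent with the paper's level of detail.
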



\section{An action of \texorpdfstring{$\cslt$}{sl2}}
\label{sec:sltwo}

\subsection{Witt actions}

The partial derivatives from~\autoref{sec:partder} work integrally and lift to Witt operators on $\rnz$. 
For each $k\in\{-1,0,1,2,\dots\}$ define 
\[ 
\ell_k = \sum\limits_{j=1}^{n}x_j^{k+1}\dfrac{\partial }{\partial x_j} . 
\]
The action of the $\ell_k$s on $\rnz$ satisfy $[\ell_k,\ell_r]=(r-k)\ell_{k+r}$.

\vspace{0.1in}

In the sequel we consider $k\in\{0,\pm 1\}$ and we define operators $\ee=\ell_1=\dif_n$, $\ff=-\ell_{-1}$, and $\hh=[\ee,\ff]=2\ell_0$.

\subsection{The \texorpdfstring{$\cslt$}{sl2}-module \texorpdfstring{$\rnz$}{Rn}}\label{ssec:Rn-slt}
As in~\cite{eliasqi-sl2}  we use the notation $(\Pol_n,\cslt)$, $(\rnz,\cslt)$, etc... to emphasize that we are looking at the $\cslt$-module structure.

\begin{rem}
Contrary to the case of $\Pol_n$  the action of the operator $\hh$ on $\rnz$ is not diagonalizable. 
For example for $n=2$ one has 
\begin{align*}
\hh(\omega_1) &= 2x_2\omega_2, & \hh(\omega_2) &= 0 ,
\intertext{and}
\ee(\omega_1) &= x_2^2\omega_2, &    \ee(\omega_2) &= 0
\\
\ff(\omega_1) &= -\omega_2, &  \ff(\omega_2) &= 0 .
\end{align*}
 The action of $\hh$ cannot be be diagonalized but as we will see below, $\rnz$ has a filtration whose successive quotients are weight modules. 
\end{rem}

\begin{rem}
Recall that the \emph{degree operator} $\deg_q$ acts as multiplying any element by its degree.
As the preceding example shows, $\hh$ does not act on $R_n$ as the degree operator, opposite to its action on $\Pol_n$ (and on $\nh_n$).
Nevertheless, the degree operator satisfies
\begin{gather}
\label{eq:degedegf}
    [\deg_q,\ee] = 2\ee ,\mspace{40mu}    [\deg_q,\ff] = -2\ff ,
    \\   
\label{eq:deghh}
    [\deg_q,\hh] = 0 .    
\end{gather}
Relations~\eqref{eq:degedegf} hold in any graded category, whenever derivations of degrees $\pm 2$ are present, while relation~\eqref{eq:deghh} shows that the Lie algebra $\cglt$ (with basis elements $\ee$, $\ff$, $\hh$ and $\deg_q$) acts on $R_n$ (and therefore on $A_n$). 
\end{rem}

Recall the \emph{co-Verma module} $\nabla(0)$ from~\cite[\S 1.6]{eliasqi-sl2}:
\[
\begin{tikzpicture}[scale=.5,baseline=-.08cm,tinynodes]
\node at (0,0) {\small $\dotsc$}; 
\node at (-3,0) {\small $v_{k+1}$}; 
\node at (-6,0) {\small $v_k$}; 
\node at (-9,0) {\small $\dotsc$}; 
\node at (-12,0) {\small $v_2$}; 
\node at (-15,0) {\small $v_1$}; 
\node at (-18,0) {\small $v_0$}; 
\draw [myred,thick,latex-] (-15.2,0.55) to [bend right] (-17.8,0.55);\node at (-16.5,1.5) {$0$};
\draw [myred,thick,latex-] (-12.2,0.55) to [bend right] (-14.8,0.55);\node at (-13.5,1.5) {$\dif=1$};
\draw [myred,thick,latex-] (-3.2,0.55) to [bend right] (-5.8,0.55);\node at (-4.5,1.5) {$\dif=k$};
\draw [myred,thick,latex-] (-0.2,0.55) to [bend right] (-2.8,0.55);\node at (-1.5,1.5) {$\dif=k+1$};
\draw [mygreen,thick,latex-] (-17.8,-0.5) to [bend right] (-15.2,-0.5);
\node at (-16.5,-1.5) {$z=1$};
\draw [mygreen,thick,latex-] (-14.8,-0.5) to [bend right] (-12.2,-0.5);
\node at (-13.5,-1.5) {$z=2$};
\draw [mygreen,thick,latex-] (-5.8,-0.5) to [bend right] (-3.2,-0.5);
\node at (-4.5,-1.5) {$z=k+1$};
\draw [mygreen,thick,latex-] (-2.8,-0.5) to [bend right] (-0.2,-0.5);
\node at (-1.5,-1.5) {$z=k+2$}; 
\end{tikzpicture}
\]
We have that $(\bZ[x],\cslt)\cong\nabla(0)$ via $v_m\mapsto x^m$,  and that
$(\bZ[x_1,\dotsc ,x_n],\cslt)\cong\nabla(0)^{\otimes n}$. 

\vspace{0.1in}

For $\und{\omega}$ a monomial in $\bV^{\mspace{-4mu}m}(\omega_{1},\dotsc, \omega_{n})$ we write $\und{\omega}\nabla(0)$ for the $\cslt$-module
$\und{\omega}\bZ \otimes \nabla(0)$, with $\cslt$ acting trivially on $\und{\omega}\bZ$. 

\vspace{0.1in}

Before considering the general case it is instructive to look closer at the $\cslt$-modules $\rnz$ for $n=1,2$. 
\begin{ex}
We have $R_1=\bZ[x,\omega]/(\omega^2)$ with $\ee = x^2\dfrac{\partial}{\partial x}$, 
$\hh = 2x\dfrac{\partial}{\partial x}$ and $\ff = -\dfrac{\partial}{\partial x}$.
Since $\dfrac{\partial}{\partial x}\omega=0$ the element $\omega x$ generates the co-Verma module $\omega\nabla(0)$ (it is isomorphic to $\nabla(0)$ as ungraded modules via $v_m\mapsto \omega x^m$). 
We have that 
\[ (R_1,\cslt)\cong \nabla(0)\oplus\omega\nabla(0) . \]
\end{ex}

This decomposition along the $\lambda$-grading is no coincidence as the operators $\ee$, $\ff$ and $\hh$ respect the $\lambda$-grading. Moreover, since the submodules of $\rnz$ in the bottom and top $\lambda$-degrees are of rank 1 over $\bZ[x_1,\dotsc , x_n]$ they give always direct summands.

\begin{ex}
We have $R_2=\bZ[x_1,x_2,\omega_1,\omega_2]/(\omega_i\omega_j+\omega_j\omega_i, i,j=1,2)$.
We expect to have at least three summands here. 
In $\lambda$-degree zero we have $R_2^{0}=\bZ[x_1,x_2]$ and therefore
  \[
(R_2^{0}, \cslt) \cong \nabla(0)^{\otimes 2} .
  \]
The case $\lambda=4$ is also trivial: we have  $R_2^{4}=\bZ[x_1,x_2]\omega_1\omega_2$ and
  \[
    \dfrac{\partial}{\partial x_1}(\omega_1\omega_2) = 0 = \dfrac{\partial}{\partial x_2}(\omega_1\omega_2)
   ,
  \]
and so
\[
  (R_2^{4}, \cslt) \cong \omega_1\omega_2\nabla(0)^{\otimes 2}
 .
\]
At this time
\[ (R_2,\cslt)\cong \nabla(0)^{\otimes 2}\oplus (R_2^{2},\cslt) \oplus \omega_1\omega_2\nabla(0)^{\otimes 2}, \]
as expected.
Note that the summands $\nabla(0)^{\otimes 2}$ and $\omega_1\omega_2\nabla(0)^{\otimes 2}$ are weight.

Let us now analyse  $(R_2^{2},\cslt)$.
We have that
  \[
    \dfrac{\partial}{\partial x_1}(\omega_1)=0,\quad
    \dfrac{\partial}{\partial x_2}(\omega_1)=\omega_2, \quad
    \dfrac{\partial}{\partial x_1}(\omega_2) = 0 = \dfrac{\partial}{\partial x_2}(\omega_2)
    .
  \]
Since as abelian groups $R_2^{2}=\bZ[x_1,x_2]\omega_1\oplus \bZ[x_1,x_2]\omega_2$ we see that 
$(\bZ[x_1,x_2]\omega_2,\cslt)$ is a submodule isomorphic to $\omega_2\nabla(0)^{\otimes 2}$ (this is no coincidence, as we will see later). 

One more remark: it is clear that the quotient $(R_2^{2},\cslt)/\omega_2\nabla(0)^{\otimes 2}$ is isomorphic to $\omega_1\nabla(0)^{\otimes 2}$, since $\ee$, $\ff$ and $\hh$ send $\omega_1$ to a multiple (possible zero) of $\omega_2$!
We see that the non-weight module $(R_2^{2},\cslt)$ has a length 1 filtration with quotient isomorphic to the co-Verma module $\nabla(0)^{\otimes 2}$.
\end{ex}

\vspace{0.1in}

Any element of  $\bV^{\mspace{-4mu}m}(\omega_{1},\dotsc, \omega_{n})$ 
can be written as a linear combination  of monomials of the form $\omega_{i_1}\dotsm\omega_{i_m}$ with $i_1<\dotsm < i_m$. The lexicographic order on the set of these monomials introduces a (total) order $<$ on $\bV^{\mspace{-4mu}m}(\omega_{1},\dotsc, \omega_{n})$. 
The minimal and the maximal elements  of $\bV^{\mspace{-4mu}m}(\omega_{1},\dotsc, \omega_{n})$ with respect to $<$ are respectively
$\und{\omega}_1:=\omega_1\dotsm \omega_{m}$
and $\und{\omega}_{\binom{n}{m}}:=\omega_{n-m+1}\dotsm\omega_{n}$.
List all these monomials in order as
\[
\und{\omega}_1 <  \und{\omega}_2 < \dotsc < \und{\omega}_{\binom{n}{m}}
 .
\]
The order $<$ induces a partial order on $(\rnz)^{2m}$ where for nonzero $p,q\in\Pol_n$ we have that 
$p\und{\omega}_i< q\und{\omega}_j$ if $\und{\omega}_i< \und{\omega}_j$.

\begin{ex}
For $(R_3^{2},\cslt)$ we have
\[
  \und{\omega}_1=\omega_1\omega_2 ,\qquad
  \und{\omega}_2=\omega_1\omega_3 ,\qquad
  \und{\omega}_3=\omega_2\omega_3
,
\]
and
\begin{align*}
  \dfrac{\partial}{\partial x_1}(\und{\omega}_1) &= 0,\quad
&  \dfrac{\partial}{\partial x_2}(\und{\omega}_1) &= 0, \quad
&  \dfrac{\partial}{\partial x_3}(\und{\omega}_1) &= \und{\omega}_2 - (x_2-x_3) \und{\omega}_3 ,
\\
  \dfrac{\partial}{\partial x_1}(\und{\omega}_2) &= 0,\quad
&  \dfrac{\partial}{\partial x_2}(\und{\omega}_2) &= \und{\omega}_3,\quad
&  \dfrac{\partial}{\partial x_3}(\und{\omega}_2) &= \und{\omega}_3  , 
\\
  \dfrac{\partial}{\partial x_1}(\und{\omega}_3) &= 0,\quad
&  \dfrac{\partial}{\partial x_2}(\und{\omega}_3) &= 0 ,\quad
&  \dfrac{\partial}{\partial x_3}(\und{\omega}_3) &= 0 . 
\end{align*}
Consider the following submodules of $(R_3^{2},\cslt)$:
\begin{align*}
  (R_3^{2}(\geq\! 3),\cslt) &= (\und{\omega}_3\!\Pol_3^\bZ,\cslt),
                             \\
  (R_3^{2}(\geq\! 2),\cslt) &= (\und{\omega}_2\Pol_3^\bZ\oplus\ \und{\omega}_3\Pol_3^\bZ,\cslt),
                             \\
  (R_3^{2}(\geq\! 1),\cslt) &= (\und{\omega}_1\Pol_3^\bZ\oplus\ \und{\omega}_2\Pol_3^\bZ\oplus\ \und{\omega}_3\Pol_3^\bZ,\cslt)
.
\end{align*}
We have a filtration by $\cslt$-modules
\[
  (R_3^{2}(\geq\!3),\cslt)\subset (R_3^{2}(\geq\! 2),\cslt) \subset (R_3^{2}(\geq\! 1),\cslt)=(R_3^{2},\cslt)
.
\]
Note that the successive quotients in this filtration are 
\[
  (R_3^{2}(\geq\! 2),\cslt)/(R_3^{2}(\geq\!3),\cslt)\cong\und{\omega}_2\nabla(0)^{\otimes 3}
  ,
\]
and
\[
  (R_3^{2}(\geq\! 1),\cslt)/(R_3^{2}(\geq\!2),\cslt)\cong\und{\omega}_1\nabla(0)^{\otimes 3}
  .
\]
\end{ex}

\vspace{0.1ex}  

We now state and prove the main result about the structure of $(\rnz,\cslt)$.
\begin{thm}\label{thm:Rn-slt}
  Let $(\rnz)^{2m}(\geq\mspace{-5mu}r)\subset (\rnz)^{2m}$ be the $\bZ$-submodule spanned by $\Pol_n$ and by all monomials $\und{\omega}_{s}\geq \und{\omega}_{r}$.
\begin{enumerate}
\item  We have a decomposition of $\cslt$-modules 
  \[
(\rnz, \cslt) \cong \bigoplus_{m=0}^{n}((\rnz)^{2m}, \cslt) . 
  \]
\item The summand $\bigl((\rnz)^{2m}, \cslt\bigr)$  has a filtration by $\cslt$-modules
\[
M_{\tbinom{n}{m}-1} \subset M_{\tbinom{n}{m}-2}\subset\dotsm\subset M_1\subset M_0 = (\rnz)^{2m} ,
\]
where $ M_i = \bigl((\rnz)^{2m}(\geq\!j+1), \cslt\bigr)$.
\item Each sucessive quotient is isomorphic to a co-Verma module: 
\[
  M_{r+1}/ M_r \cong \und{\omega}_r\nabla(0)^{\otimes n}
  .
\]
\end{enumerate}
\end{thm}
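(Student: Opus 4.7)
The plan is to reduce all three claims to a single monotonicity property for partial derivatives with respect to the lex order on wedge monomials. Part (i) is immediate: by~\autoref{lem:difaspartial} and the parallel formulas $\ff = -\sum_j \partial/\partial x_j$ and $\hh = 2\sum_j x_j\,\partial/\partial x_j$, each of $\ee$, $\ff$, $\hh$ is a $\Pol_n$-linear combination of the operators $\partial/\partial x_r$, and by~\autoref{defn:partialder} each such partial derivative sends $\omega_i$ into $\bigoplus_j \Pol_n\,\omega_j$. Hence every operator in $\cslt$ preserves the $\lambda$-grading, yielding the direct sum decomposition across $m$.

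The core of parts (ii) and (iii) is the following sublemma: for every $r \in \{1,\dotsc,n\}$ and every wedge monomial $\und{\omega}_s = \omega_{i_1}\dotsm\omega_{i_m}$ with $i_1 < \dotsm < i_m$, one has
\[
\frac{\partial}{\partial x_r}(\und{\omega}_s) \in \bigoplus_{t > s} \Pol_n\,\und{\omega}_t.
\]
The proof I have in mind is by Leibniz: since $\partial/\partial x_r$ is an even derivation, $\partial/\partial x_r(\omega_{i_1}\dotsm\omega_{i_m}) = \sum_k \omega_{i_1}\dotsm\partial/\partial x_r(\omega_{i_k})\dotsm\omega_{i_m}$, and by~\autoref{defn:partialder} each $\partial/\partial x_r(\omega_{i_k})$ is a $\Pol_n$-linear combination of $\omega_j$ with $j > i_k$. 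Reinserting $\omega_j$ into its sorted position (picking up wedge signs) either gives zero---when $j$ coincides with some $i_l$ for $l > k$---or produces a sorted tuple that agrees with $(i_1,\dotsc,i_m)$ in its first $k-1$ entries and has strictly greater $k$-th entry (either $i_{k+1}$, or $j$ itself in the case $k = m$). In either nonzero case, the resulting monomial is strictly larger than $\und{\omega}_s$ in lex order.

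From the sublemma, each $\xi \in \{\ee,\ff,\hh\}$ satisfies $\xi(p\,\und{\omega}_s) \equiv \xi_{\Pol_n}(p)\,\und{\omega}_s \pmod{\bigoplus_{t > s}\Pol_n\,\und{\omega}_t}$, where $\xi_{\Pol_n}$ denotes the corresponding operator acting on $\Pol_n \cong \nabla(0)^{\otimes n}$. This gives stability of each $M_i$ under $\cslt$, proving (ii); on each successive quotient the $\cslt$-action factors through the polynomial multiplier, so $M_r/M_{r+1}$ is $\cslt$-equivariantly identified with $\und{\omega}_{r+1}\nabla(0)^{\otimes n}$, proving (iii). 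The main technical hurdle is the strict inequality in lex order within the sublemma: it requires case analysis on where $j$ falls among $i_{k+1},\dotsc,i_m$ and attention to wedge signs to see that the result, when nonzero, remains strictly higher in the order. This is combinatorial bookkeeping rather than a conceptual obstruction.
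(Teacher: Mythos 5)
Your proposal is correct and follows essentially the same route as the paper: both rest on the observation that the operators $\ee$, $\ff$, $\hh$ are $\Pol_n$-linear combinations of the partial derivatives, which preserve the $\lambda$-grading and send each wedge monomial to a combination of strictly larger monomials in the lexicographic order, so that the $M_i$ are submodules and the successive quotients carry the $\nabla(0)^{\otimes n}$-action through the polynomial factor. The only difference is one of detail: the paper deduces the order-monotonicity from the recursion in \autoref{rem:partder-rec} without spelling out the Leibniz-and-reinsertion argument on wedge monomials, which you supply explicitly (and your indexing of the successive quotients, $M_r/M_{r+1}\cong\und{\omega}_{r+1}\nabla(0)^{\otimes n}$, is in fact the consistent reading of the statement).
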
 
\begin{proof}
The first statement is immediate since the $\cslt$-action on $\rnz$ preserves the $\lambda$-grading. 
From~\autoref{rem:partder-rec} it follows that none of the operators $\ee$, $\ff$ and $\hh$ can increase $<$ and this implies that $((\rnz)^{2m}(\geq\mspace{-5mu}r),\cslt)$ is submodule of $((\rnz)^{2m}(\geq\mspace{-5mu}s),\cslt)$ if $r>s$.  To prove the third  statement we observe that as a $\bZ$-module space,
  $(\rnz)^{2m}(\geq\mspace{-5mu}s) / (\rnz)^{2m}(\geq\mspace{-5mu}s+1)$ is isomorphic to $\und{\omega}_s\Pol_n$. 
\end{proof}

\subsection{A filtration on the \texorpdfstring{$\cslt$}{sl2}-module \texorpdfstring{$A_n$}{An}}

To extend the action of $\cslt$ to $A_n$ 
we declare that operator $\ee$ acts on the $T_i$s as $\dif$ in~\eqref{eq:dTi}, while
\begin{equation}
  \ff(T_i)=0, \qquad \hh(T_i)=2T_i,
\end{equation}
for $i=1,\dotsc ,n-1$.
These operators are required to satisfy the Leibniz rule. 

\vspace{0.1in}

Let $(A_n^{2m},\cslt)\subset (A_n,\cslt)$ be the $\cslt$-submodule spanned by all elements in $\lambda$-degree $2m$. We have
\[
(A_n, \cslt) \cong \bigoplus_{m=0}^{n}(A_n^{2m}, \cslt) . 
\]

Denote by $\bigl(A_n^{2m}(\geq\!\! j),\cslt\bigr)\subset (A_n^{2m},\cslt)$ the $\cslt$-submodule generated by $\nh_n$ and $\und{\omega}_r$ for $r\geq j$ and let $(\und{\omega}_s\!\nh_n,\cslt)$ be the $\cslt$-module $\und{\omega}_s\bZ\otimes\nh_n$  with trivial $\cslt$-action on $\und{\omega}_s\bZ$. 

 \begin{prop}
We have a filtration of $(A_n^{2m},\cslt)$ by $\cslt$-modules 
\[
\bigl(A_n^{2m}(\geq\! \tbinom{n}{m}-1),\cslt\bigr)\subset\dotsm \subset\bigl(A_n^{2m}(\geq\! 1),\cslt\bigr) = (A_n^{2m},\cslt) ,
\]
with
\[
\bigl(A_n^{2m}(\geq\! s),\cslt\bigr) / \bigl(A_n^{2m}(\geq\! s+1),\cslt\bigr) \cong (\und{\omega}_s\!\nh_n,\cslt) .
\]
\end{prop}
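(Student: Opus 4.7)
The plan is to follow the strategy of Theorem \ref{thm:Rn-slt}, substituting $\nh_n$ for $\Pol_n$ as the ``coefficient algebra''. Using the decomposition \eqref{eq:AnNhdecomposition} rephrased in terms of the plain generators $\omega_i$ (via the PBW-type basis obtained by moving all $\omega$'s to the left in any product), I would first identify $A_n^{2m}$ with the $\bZ$-span of products $\und{\omega}_r h$ where $\und{\omega}_r$ runs over the $\binom{n}{m}$ ordered monomials of length $m$ in the $\omega_i$'s and $h\in\nh_n$; then $A_n^{2m}(\geq s)$ is the span of those products with $\und{\omega}_r \geq \und{\omega}_s$.

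The core step is to show that $\ee$, $\ff$ and $\hh$ applied to any monomial $\und{\omega}_r = \omega_{i_1}\cdots\omega_{i_m}$ land in $A_n^{2m}(>r)$ strictly. This rests on \autoref{rem:partder-rec}: each $\partial/\partial x_j(\omega_i)$ is a polynomial combination of $\omega_k$'s with $k>i$. By the Leibniz rule, each term in $\xi(\und{\omega}_r)$ for $\xi \in \{\ee,\ff,\hh\}$ is obtained from $\und{\omega}_r$ by replacing some $\omega_{i_k}$ with an $\omega_j$ having $j>i_k$; after re-sorting the indices, the resulting $\omega$-monomial either vanishes (by an index collision) or is strictly larger in the lex order than $\und{\omega}_r$.

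Together with the observation that $\ee$, $\ff$, $\hh$ preserve $\nh_n$ (they act on the generators $x_i$ and $T_i$ without producing any $\omega$), the Leibniz rule yields, for $\xi\in\{\ee,\ff,\hh\}$,
\[
\xi(\und{\omega}_s h) = \xi(\und{\omega}_s)\, h + \und{\omega}_s\, \xi(h) \ \in\ A_n^{2m}(> s) + \und{\omega}_s\nh_n .
\]
This simultaneously proves invariance of each $A_n^{2m}(\geq s)$ under the $\cslt$-action and shows that, modulo $A_n^{2m}(\geq s+1)$, the induced action on the coset of $\und{\omega}_s h$ reduces to the coset of $\und{\omega}_s\, \xi(h)$. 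The desired isomorphism $A_n^{2m}(\geq s)/A_n^{2m}(\geq s+1) \cong \und{\omega}_s \nh_n$ then follows.

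The main technical obstacle I foresee is justifying carefully the PBW-type decomposition of $A_n^{2m}$ in the plain $\omega_i$-basis: commuting $\omega_i$ past $T_i$ via \eqref{eq:nhs3} shuffles polynomial factors between the $\omega$-part and the nilHecke part, and one must check that this process never creates $\omega$-monomials strictly below $\und{\omega}_r$ in the lex order. Since \eqref{eq:nhs3} can only turn an $\omega_i$ into an $\omega_i$ or an $\omega_{i+1}$ (with polynomial coefficients), the filtration is not only respected but indeed refined by the algebra structure, so the argument closes.
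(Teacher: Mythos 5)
Your argument is correct. The paper in fact states this proposition without proof, and what you write is exactly the transport of its proof of \autoref{thm:Rn-slt} from $\Pol_n$ to $\nh_n$: the filtration is preserved because the partial derivatives only raise $\omega$-indices (\autoref{rem:partder-rec}) while $\ee$, $\ff$, $\hh$ preserve the subalgebra $\nh_n$, and the successive quotients are read off from the Leibniz rule. The one genuinely new point compared to the $R_n$ case is the one you flag yourself, namely that rewriting elements of $A_n^{2m}$ in the form $\und{\omega}_r h$ with $h\in\nh_n$ via \eqref{eq:nhs2} and \eqref{eq:nhs3} can only keep or raise $\omega$-indices (turning $\omega_i$ into $\omega_i$ or $\omega_{i+1}$), so the lexicographic filtration is compatible with the algebra structure; you resolve this correctly, and the resulting proof is complete.
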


\subsection{The \texorpdfstring{$\csln$}{sln}-module \texorpdfstring{$\rnz$}{Rn}}

The operators 
\[
\ee_i= x_i\dfrac{\partial}{\partial x_{i+1}}, \qquad\ff_i=x_{i+1}\dfrac{\partial}{\partial x_i}, \qquad\hh_i= [\ee_i,\ff_i]=x_i\dfrac{\partial}{\partial x_{i+1}}-x_{i+1}\dfrac{\partial}{\partial x_i}
\]
define an $\csln$-action on the polynomial ring $\Pol_n$ and it is well known that each (polynomial) degree is irreducible: the linear subspace consisting of polynomials of degree $k$ is isomorphic to the irreducible of highest weight $(k,0,\dotsc ,0)$. 
This action, which is not $\Sy_n$-equivariant, extends to an action on $\rnz$ that preserves all gradings, but each degree component is not irreducible anymore.

Denote by $\mathcal{m}_n$ the set of all monomials in $\bV \left(\omega_{1},\dotsc, \omega_{n}\right)$ and let $R_{n,m}\subset R_n$ be the linear subespace spanned by all monomials of $q$-degree $m$.
Computations in low rank suggest the following. 
\begin{conj}
The $\csln$-module $R_{n,m}$ decomposes as 
\[
R_{n,m} \cong \bigoplus\limits_{\underline{\omega}\in\mathcal{m}_n} V_{\underline{\omega}}, 
\]
where $V_{\underline{\omega}}$ is isomorphic to the irreducible with highest weight 
$\bigl(\tfrac{1}{2}m+\tfrac{1}{2}\qdeg(\underline{\omega}),0,\dotsc,0\bigr)$.
\end{conj}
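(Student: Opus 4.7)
The plan is to find a change of basis for the $\omega$-generators of $R_n$ whose new generators are $\csln$-invariant; once this is in place, $R_n$ becomes a direct sum of copies of $\Pol_n$ as an $\csln$-module, and restricting to the $q$-degree-$m$ component will yield the conjectured decomposition, because the polynomial-degree-$k$ subspace of $\Pol_n$ is the irreducible $\csln$-representation of highest weight $(k,0,\dotsc,0)$.

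The natural candidates for the new basis are the labeled omegas $\omega_n^0,\omega_n^1,\dotsc,\omega_n^{n-1}$. The core lemma I would establish is that each $\omega_n^j$ lies in the common kernel of \emph{every} partial derivative $\partial/\partial x_r$, not only of $\dif_n=\sum_r x_r^2\,\partial/\partial x_r$. By~\autoref{prop:Omega}, $\omega_n^{n-i}=\Omega_n(\omega_i)=\phi'_{n-1}\circ\phi'_{n-2}\circ\cdots\circ\phi'_i(\omega_i)$, and the proof of~\autoref{thm:pdgphi} already establishes that each $\phi'_k\colon R_k\to R_{k+1}$ satisfies (i) $\partial/\partial x_r\circ\phi'_k=\phi'_k\circ\partial/\partial x_r$ for $r\leq k$, and (ii) $\partial/\partial x_{k+1}\circ\phi'_k=0$. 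I would then push $\partial/\partial x_r$ inward through $\Omega_n(\omega_i)$: for $r\leq i$ it commutes through every $\phi'_k$ by (i) and terminates at $\partial/\partial x_r(\omega_i)=0$; for $r>i$ it commutes through $\phi'_{n-1},\dotsc,\phi'_r$ and then meets $\phi'_{r-1}$, where (ii) annihilates it. Consequently the $\omega_n^j$'s are killed by every $\ee_i=x_i\,\partial/\partial x_{i+1}$, $\ff_i=x_{i+1}\,\partial/\partial x_i$ and by the Cartan generators, making them $\csln$-invariants.

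Next, I would observe that the transition matrix from $\{\omega_1,\dotsc,\omega_n\}$ to $\{\omega_n^{n-1},\dotsc,\omega_n^0\}$ is upper-triangular with $1$'s on the diagonal over $\Pol_n$, hence an invertible $\Pol_n$-linear change of basis. Anticommutativity of the original $\omega_i$'s immediately forces $(\omega_n^j)^2=0$ and $\omega_n^i\omega_n^j+\omega_n^j\omega_n^i=0$, giving a $\Pol_n$-algebra isomorphism $R_n\cong\Pol_n\otimes\bV(\omega_n^0,\dotsc,\omega_n^{n-1})$. Since $\csln$ acts trivially on the exterior factor and standardly on $\Pol_n$, this yields a $\csln$-module decomposition $R_n\cong\bigoplus_{S\subseteq\{0,\dotsc,n-1\}}\Pol_n\cdot\omega_n^S$, where $\omega_n^S:=\prod_{j\in S}\omega_n^j$, each summand isomorphic to $\Pol_n$ with its usual $\csln$-action. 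The change of basis preserves $q$-degree, so intersecting with $R_{n,m}$ picks out the polynomial-degree $k_S=(m-\deg_q(\omega_n^S))/2$ subspace of each summand, which is the irreducible of highest weight $(k_S,0,\dotsc,0)$; the triangular change of basis bijects monomials $\und{\omega}\in\mathcal{m}_n$ with subsets $S$, yielding the claimed decomposition.

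The hard part will be carrying out the propagation argument of the second paragraph rigorously, extending the commutation/annihilation relations for the $\phi'_k$'s from the $\omega$-generators to the iterated composition $\Omega_n(\omega_i)$; once formalized by induction on $n-i$ the remainder is essentially book-keeping. A secondary point to clarify is a convention mismatch: with $\deg_q(\omega_i)=-2i$, the formula $\tfrac{1}{2}m+\tfrac{1}{2}\qdeg(\und{\omega})$ in the statement produces negative highest weights for many $\und{\omega}$, so the intended reading must be $\qdeg=-\deg_q$, in which case the highest weight equals the polynomial degree of the polynomials multiplying $\und{\omega}$, matching the decomposition above.
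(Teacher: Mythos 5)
The statement you are addressing is left as a \emph{conjecture} in the paper --- the authors only report that computations in low rank suggest it --- so there is no proof of record to compare against; your argument, as far as I can check, actually establishes it. The decisive new ingredient is your lemma that each labelled omega $\omega_n^j$ is annihilated by \emph{every} partial derivative $\partial/\partial x_r$, which is strictly stronger than what the paper proves (\autoref{cor:symextaction} only gives $\dif_n(\omega_n^j)=0$). Your propagation argument for this lemma is sound: the identities $\tfrac{\partial}{\partial x_r}\circ\phi'_k=\phi'_k\circ\tfrac{\partial}{\partial x_r}$ for $r\leq k$ and $\tfrac{\partial}{\partial x_{k+1}}\circ\phi'_k=0$, established in the proof of \autoref{thm:pdgphi} on the generators, extend to arbitrary $\Pol_k$-linear combinations of the $\omega_j$ via the formula $\phi'_k(f)=(1-x_{k+1}\tfrac{\partial}{\partial x_{k+1}})\iota(f)$ together with \autoref{prop:partialdcommute} and \autoref{prop:doublepart}; since every intermediate stage of $\Omega_n(\omega_i)=\phi'_{n-1}\dotsm\phi'_i(\omega_i)$ stays in $\lambda$-degree $2$, the induction on $n-i$ closes (I verified the lemma directly for $n\leq 3$). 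The rest is indeed bookkeeping: the unitriangular change of basis gives $R_n\cong\Pol_n\otimes\bV(\omega_n^0,\dotsc,\omega_n^{n-1})$, the $\csln$-operators are derivations killing each product $\omega_n^S$, so each $\Pol_n\cdot\omega_n^S$ is a submodule isomorphic to $\Pol_n$ with its standard action, and intersecting with the $q$-degree-$m$ piece produces the conjectured summands. Your reading of the weight formula is the right one: with $\deg_q(\omega_i)=-2i$ the highest weight must be $\tfrac12\bigl(m-\deg_q(\und{\omega})\bigr)$, so $\qdeg$ in the statement has to be the negative of $\deg_q$ (and summands with negative weight are zero).

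Two remarks worth recording. First, your lemma has a consequence beyond the conjecture: the Witt operators $\ell_k=\sum_j x_j^{k+1}\tfrac{\partial}{\partial x_j}$ are also built from the partial derivatives, so they too annihilate the $\omega_n^j$; hence the filtration of \autoref{thm:Rn-slt} actually splits in the labelled basis, and for instance $\hh(\omega_1-x_2\omega_2)=0$ in $R_2$, so $\hh$ \emph{is} diagonalizable on $(R_2^2,\cslt)$, contrary to the remark preceding that theorem. Second, irreducibility of the degree-$k$ component of $\Pol_n$ is only literally true in characteristic zero; over $\bZ$, where Section 5 is set, your direct-sum decomposition holds verbatim but ``irreducible'' should be read as the corresponding integral form, exactly as in the paper's own preamble to the conjecture. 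Neither point affects the validity of your argument.
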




\vspace*{1cm}





\end{document}